\newcommand{\eps}{\epsilon}
\newcommand{\grad}{\nabla}
\newcommand{\norm}[1]{\left\| #1 \right\|}
\newcommand{\abs}[1]{\left| #1 \right|}
\newcommand{\set}[1]{\left\{ #1 \right\}}
\newcommand{\brak}[1]{\left\langle #1 \right\rangle} 
\newcommand{\nchoosek}[2]{ \begin{pmatrix} #1 \\ #2 \end{pmatrix} } 
\newcommand{\R}{\mathbb{R}}
\newcommand{\Z}{\mathbb{Z}}
\newcommand{\loc}{\mathrm{loc}}
\newcommand{\cL}{\mathcal{L}}
\newcommand{\cC}{\mathcal{C}}
\newcommand{\cM}{\mathcal{M}}
\newcommand{\cD}{\mathcal{D}}
\newcommand{\cT}{\mathcal{T}}
\newcommand{\Xbf}{\mathbf{X}}
\newcommand{\dee}{\mathrm{d}}
\newcommand{\ds}{\dee s}
\newcommand{\dt}{\dee t}
\newcommand{\dv}{\dee v}
\newcommand{\dx}{\dee x}
\newcommand{\brv}{\left<v\right>}
\DeclareMathOperator{\Div}{\mathrm{div}}
\newcommand{\sev}{\sigma_{k}e_k\cdot\nabla_v}
\newcommand{\pab}{\partial_{x}^{\alpha} \partial_{v}^\beta}
\newcommand{\pvab}{\partial_{x}^{\alpha} \partial_{v}^{\beta+e_j}}
\newcommand{\pxab}{\partial_{x}^{\alpha+e_j} \partial_{v}^{\beta}}
\newcommand{\cd}{\mathbb{D}}
\newcommand{\cN}{\mathcal{N}}
\newcommand{\F}{\mathcal{F}}
\renewcommand{\P}{\mathbf{P}}
\newcommand{\E}{\mathbf{E}}
\newcommand{\EE}{\mathbf E}
\newcommand{\PP}{\mathbf P}
\newtheorem{theorem}{Theorem}[section]
\newtheorem{proposition}[theorem]{Proposition}
\newtheorem{corollary}[theorem]{Corollary}
\newtheorem{lemma}[theorem]{Lemma}
\newtheorem*{lemma*}{Lemma}
\theoremstyle{definition}
\newtheorem{definition}[theorem]{Definition}
\newtheorem{remark}[theorem]{Remark}
\numberwithin{equation}{section}
\begin{document}

\title{The Vlasov-Poisson and Vlasov-Poisson-Fokker-Planck systems in stochastic electromagnetic fields: local well-posedness} 
\author{Jacob Bedrossian\thanks{Department of Mathematics, University of Maryland, College Park, MD 20742, USA \href{mailto:jacob@math.umd.edu}{jacob@math.umd.edu}. Both J.B. and S.P. were supported by National Science Foundation Award DMS-2108633.} \and Stavros Papathanasiou\thanks{Department of Mathematics, University of Maryland, College Park, MD 20742, USA \href{mailto:stavrosp@umd.edu}{stavrosp@math.umd.edu}}}

\maketitle
\begin{abstract}
In this paper, we construct unique, local-in-time strong solutions to the Vlasov-Poisson (VP) and Vlasov-Poisson-Fokker-Planck (VPFP) systems subjected to external, spatially regular, white-in-time electromagnetic fields in $\mathbb T^d \times \mathbb R^d$. Initial conditions are taken $H^\sigma$ with $\sigma > d/2 + 1$ (in addition to polynomial velocity weights).  
We additionally show that solutions to the VPFP are instantly $C^\infty_{x,v}$ due to hypoelliptic regularization if the external force fields are smooth. 
The external forcing arises in the kinetic equation as a stochastic transport in velocity, which means, together with the anisotropy between $x$ and $v$ in the nonlinearity, that the local theory is a little more complicated than comparable fluid mechanics equations subjected to either additive stochastic forcing or stochastic transport. 
Although stochastic electromagnetic fields are often discussed in the plasma physics literature, to our knowledge, this is the first mathematical study of strong solutions to nonlinear stochastic kinetic equations.
\end{abstract} 

\setcounter{tocdepth}{1}
{\small\tableofcontents}

\section{Introduction}\label{sec:intro}
In this paper we prove the local-in-time existence and uniqueness of (probabilistically strong) solutions of the Vlasov and Vlasov-Fokker-Planck equations for a distribution of charged particles subjected to a stochastic external electric field
\begin{align}
  \begin{cases}\dee f + v\cdot\nabla_xf\dt + E\cdot\nabla_vf\dt + \nabla_vf \odot \dee W_t = \nu \nabla_v\cdot(\nabla_vf + fv)\dt\\
\rho = \int f\dv\\
E = \nabla_x(-\Delta_x)^{-1}(\rho-1) \\
\rho(t,x) = \int_{\mathbb R^d} f(t,x,v) \dee v,  
  \end{cases}
  \label{SVPFP}
\end{align}
where $\nu \geq 0$ is the collision frequency -- we treat both the case $\nu > 0$ and $\nu = 0$ (i.e. the Vlasov--Poisson equations).
Below we denote the Fokker-Planck operator
$$\mathcal L f  := \Delta_v f + \grad_v \cdot(v f),$$
which is a commonly used simplification for collisions of charged particles against a background (see e.g. \cite{boyd2003physics}).  % Remark \ref{rmk:Landau}). 

Here, we consider the problem in the periodic box $(x,v) \in \mathbb{T}^d\times\mathbb{R}^d$, although the case $(x,v) \in \mathbb R^d \times \mathbb R^d$ could be approached with similar arguments.
The process $W_t$ is a white-in-time, colored-in-space, vector-valued Gaussian process which plays the role of an external fluctuating electric field which we describe in more detail below.
Our analysis works for general $1 \leq d \leq 3$ and also applies to external magnetic fields. 
For simplicity, we will take initial conditions in the velocity-weighted Sobolev space $H^{\sigma}_m$ defined by the norm:
$$\norm{f}_{H^\sigma_m}^2 := \sum_{|\alpha|+|\beta|\leq \sigma}\iint_{\mathbb T^d \times \mathbb R^d} |\partial_x^\alpha\partial_v^\beta f(x,v)|^2 (1+|v|^2)^{m/2} \dv\dx.$$

Stochastic and randomly fluctuating electromagnetic fields are a classical topic in the plasma physics literature where they are used as a model for studying various dynamics in ``turbulent''-like situations in both confined fusion and astrophysical applications. 
Much of the work is on studying the motion of charged particles (i.e. Lagrangian trajectories or passive scalars) subjected to stochastic electromagnetic fields of various kinds; see e.g. \cites{hall1967diffusion,hasselmann1968scattering,jaekel1992fokker,hall1969particle,balescu1994langevin,krommes1983plasma,wingen2006influence,wang1995diffusive,vanden1996statistical} and the references therein for a tiny fraction of the existing work on the topic.
Another line of work regards subjecting gyrokinetic equations or other macroscopic models to randomly fluctuating external force fields of this type for the purpose of studying plasma turbulence; see e.g. \cites{tenbarge2014oscillating,navarro2016structure,told2015multiscale} and the references therein. 
The purpose of this work is to begin laying down some rigorous mathematical theory for studying nonlinear kinetic theory models of plasmas in these kinds of settings. 

It is sometimes useful to make a concrete representation of $W_t$ and for simplicity we will show how to do this in $d=3$; the extension to other dimensions is straightforward and is omitted.
To make this concrete representation of $W_t$, we define a real Fourier basis on $L^2(\mathbb T^3;\mathbb R^3)$ by defining for each $k = (\ell,i) \in \mathbb{K} := \Z^d_0 \times \{1,2,3\}$
 \[\label{eq:Fourier-Basis}
 e_k(x) = \begin{cases}
 c_d\gamma_\ell^i\sin(\ell \cdot x), \quad& \ell \in \Z^d_+\\
 c_d\gamma_\ell^i\cos(\ell \cdot x),\quad& \ell\in \Z^d_-,
 \end{cases}
 \]
where $\Z^d_0 := \Z^d \setminus \set{0,\ldots, 0}$, $\Z_+^d = \{\ell \in \Z^d_0 : \ell^{(d)} >0\}\cup\{\ell \in \Z^d_0 \,:\, \ell^{(1)}>0, \ell^{(d)}=0\}$ and $\Z_-^d = - \Z_+^d$, and for each $\ell \in \Z_0^d$, $\{\gamma_\ell^i\}_{i=1}^{3}$ is a set of three orthonormal vectors with $\set{\gamma_\ell^{1},\gamma_\ell^2}$ spanning the plane perpendicular to $\ell \in \R^3$ with the property that $\gamma_{-\ell}^i = - \gamma_{\ell}^i$ and $\gamma_\ell^3$ parallel to $\ell$. 
The constant $c_d = \sqrt{2}(2\pi)^{-d/2}$ is a normalization factor so that $e_k(x)$ are a complete orthonormal basis on $L^2$.
With this, we define our external electric field as 
\begin{align*}
W_t(x) = \sum_{k \in \mathbb{K}} \sigma_k e_k(x) W_t^{(k)},
\end{align*}
with $\set{ W_t^{(k)} }_{k \in \mathbb K}$ is a family of independent standard Wiener processes defined on a given stochastic basis $(\Omega, \mathcal{F}, \mathcal{F}_t, \P)$.
The $\sigma_{k}$ are coloring coefficients satisfying at least $\sum_{k \in \mathbb K} \abs{\sigma_{k}}^2 < \infty$, however, more stringent regularity requirements will be assumed below (here we make the natural definition $\abs{k} = \abs{\ell}$ for $k = (\ell,i) \in \mathbb K$). 
We can also treat the case of fluctuating magnetic fields; see Remark \ref{rmk:SMF} below.  

Local well-posedness of strong solutions for the deterministic problem is classical; see e.g. \cites{horst1981classical,horst1987global} for the Vlasov equations and \cites{neunzert1984vlasov,victory1990classical} for the Vlasov-Fokker-Planck equation.
Global existence for the deterministic problems was proved in \cite{pfaffelmoser1992global} (see also \cites{schaeffer1991global,batt1991global}) for the Vlasov equations and \cite{bouchut1993existence} for the Vlasov-Fokker-Planck equations; we will consider global existence for \eqref{SVPFP} in a follow up work. 
Notice that in It\^o form, the SPDE becomes 
\begin{equation}
\dee f + v\cdot\nabla_xf \dt + E\cdot\nabla_vf\dt + \nabla_vf\cdot\dee W_t = \nu \mathcal{L}f\dt + \frac{1}{2}\sum_k (\sev)^2f \dt, 
\end{equation}
so it is clear  that stochastic transport cannot be treated perturbatively with respect to the deterministic evolution, as the Stratonovich-It\^o correction term is of second order. However, this correction term is subelliptic, and so stochastic transport enjoys a special structure that makes it possible to develop a strong well-posedness theory, and in fact, it is sometimes possible to produce a better well-posedness theory for stochastic transport than for  deterministic transport \cite{flandoli2010well}. 
Due to this special structure and the many physical applications, there have been a great number of works studying stochastic transport equations recently; see for example \cites{fedrizzi2011pathwise,fedrizzi2013noise,beck2019stochastic,mohammed2015sobolev,champagnat2018strong} and also \cites{fedrizzi2017regularity,de2018invariant} in the kinetic case. 
%See also the work on the linear kinetic Fokker-Planck with transport noise  

There have been many works on fluid equations subjected to multiplicative or transport-type stochastic forcing.
For the Navier-Stokes equations see for example \cites{brzezniak1992stochastic,mikulevicius2004stochastic,brzezniak2013existence,capinski1993navier,mikulevicius2005global}. 
The 2D Euler equations in vorticity form subjected to transport noise was studied in, for example \cites{brzezniak2016existence,crisan2019well}, where strong solutions with bounded vorticity were constructed (see also \cite{crisan2019solution}). The aforementioned papers \cites{crisan2019well,crisan2019solution} belong to the so-called theory of \say{Stochastic Advection by Lie Transport} (SALT), see the foundational paper \cite{holm2015variational}, as well as \cites{crisan2020local,alonso2020well}.
The work \cite{brzezniak2020well} studies the 3D primitive equations with transport noise. 

The works \cites{debussche2011local,debussche2012global}
provide a fairly general framework to study  a wide class of dissipative fluid equations forced with multiplicative noise, such as the Navier-Stokes equations or the primitive equations. For the 2D Euler equations with various types of general multiplicative noise, see \cite{GV14} and the references therein. 

In comparison to stochastic fluid dynamics, the work on nonlinear, stochastically forced kinetic equations is significantly thinner.
The paper \cite{punshon2018boltzmann} constructs global-in-time renormalized martingale (probabilistically weak) solutions of the Boltzmann equations with external stochastic forcing similar to that used in \eqref{SVPFP}. 
In work with a clear relationship with our own, \cite{delarue2014noise} constructs global solutions of interacting point charges (i.e. Vlasov--Poisson with solutions given by a finite number of Dirac masses) subjected to stochastic external electric fields; see also \cite{coghi2016propagation}.

In this paper we continue the study of stochastic kinetic theory by proving local existence and pathwise uniqueness of strong solutions.
Let us recall some standard notions for probabilistically strong solutions of SPDEs that may experience finite-time blow up (we follow the presentation used in \cites{debussche2012global,GV14}), which are nothing more than the natural stochastic analogues of the deterministic notions of local-in-time existence, uniqueness, and maximally-extended solutions.
\begin{definition}\label{local:pathwise}
A \emph{local pathwise} solution of \eqref{SVPFP} is a pair $(f,\tau)$ with $\tau$ an almost-surely strictly positive stopping time and $f$ an adapted stochastic process satisfying the regularity
\begin{align*}
f(\cdot \wedge \tau) \in C([0,\infty);H^\sigma_m)
\end{align*}
and for $t \geq 0$ satisfies, 
\begin{align*}
f(t\wedge \tau) - f(0) + \int_0^{t \wedge \tau} \left( v \cdot \grad_x f(s) + E(s)\cdot \grad_v f - \nu \mathcal{L} f(s) \right) d\tau = \int_0^{t \wedge \tau} \grad_v f(s) \circ \dee W_s.  
\end{align*}
Moreover, we say such pathwise solutions are \emph{unique} if for any pair $(f_1,\tau_1)$, $(f_2,\tau_2)$ we have
\begin{align*}
\mathbb P \left( f_1(t) - f_2(t) = 0 \quad \forall 0 \leq t < \tau_1 \wedge \tau_2 | f_1(0) = f_2(0) \right) = 1. 
\end{align*}
In this case, $f_1$ and $f_2$ are called \emph{indistinguishable}. 
\end{definition}
The following definition of maximal pathwise solution provides a continuation criterion for strong solutions.
For this we will use $H^{d/2+1 +}_{d/2+}$; sharper continuation criteria will be considered in future work.
That is, we show that local $H^\sigma_m$ solutions can be uniquely extended provided some $H^{s_0}_{m_0}$ norm remains finite for $s_0 > 1+d/2$ and $m_0 > d$ fixed and arbitrary.
\begin{definition} \label{def:MPS}
Fix $s_0 > d/2 +1$ an integer and $m_0 > d$. 
We call a \emph{maximal pathwise solution} a triple of a solution $f$, an increasing sequence of almost-surely positive stopping times $\set{\tau_n}_{n \geq 0}$, and a limiting stopping time $\xi$ such that each pair $(f,\tau_n)$ is a local pathwise solution, $\lim_{n \to \infty} \tau_n = \xi$, and
\begin{align*}
\sup_{ 0 \leq t \leq \tau_n} \norm{f(t)}_{H^{s_0}_{m_0}} \geq n \quad \textup{ on the set } \set{\xi < \infty}. 
\end{align*}
\end{definition}
In this paper, we prove the following local existence and uniqueness theorem.
Global existence of these strong solutions will be considered in a future work. 
\begin{theorem} \label{thm:main}
Let $1 \leq d \leq 3$.  
Let $\sigma > s_0$ and $m > m_0$ be fixed integers and assume that \begin{equation}
	\sum_{k \in \mathbb K} \abs{k}^{2\sigma'}\abs{\sigma_{k}}^2 < \infty \label{coloring}
\end{equation} 
for some $\sigma' > \sigma + 4$ (integer). 
Suppose that the initial condition $f_0$  is an $\mathcal{F}_0$-measurable random variable such that $f_0 \in H^\sigma_m$ almost-surely.
Then, there exists a unique, maximal pathwise solution to \eqref{SVPFP} for any $\nu \geq 0$.
%Moreover, if $f_0$ is non-negative, then so the solution. 
\end{theorem}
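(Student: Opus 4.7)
The plan is the standard cut-off plus stopping-time procedure for nonlinear SPDEs. I would first introduce a smooth cut-off $\theta_R(\|f\|_{H^{s_0}_{m_0}})$ with $\theta_R\equiv 1$ on $[-R,R]$ in front of the Poisson self-consistent term $E\cdot\nabla_v f$, rendering the nonlinearity bounded and globally Lipschitz in $f$ (with constant depending on $R$). Global-in-time solutions $f_R$ of the cut-off SPDE can then be constructed, e.g.\ by Galerkin truncation combined with a compactness argument driven by uniform a priori bounds, or by a Picard-type fixed point on an appropriate space of adapted continuous $H^\sigma_m$-valued processes. Set $\tau_R := \inf\{t \geq 0 : \|f_R(t)\|_{H^{s_0}_{m_0}} \geq R\}$; on $[0,\tau_R]$ the cut-off is inactive, so $f_R$ solves \eqref{SVPFP} up to $\tau_R$. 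Pathwise uniqueness (below) forces $f_R \equiv f_{R'}$ on $[0,\tau_R\wedge\tau_{R'}]$, so the $f_R$ paste together into a single process $f$ defined up to $\xi := \lim_R \tau_R$, satisfying Definition \ref{def:MPS}.

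The analytic heart of the argument is a closed a priori estimate for $\|f_R\|_{H^\sigma_m}^2$. I would apply It\^o's formula to $\|\brak{v}^{m/2}\partial_x^\alpha \partial_v^\beta f_R\|_{L^2}^2$ and sum over $|\alpha|+|\beta| \leq \sigma$. The delicate contribution is the stochastic one. Since $e_k(x)$ is independent of $v$, the operator $\sev$ is skew-adjoint on $L^2_{x,v}$, so at the principal order the It\^o correction $\tfrac12\sum_k(\sev)^2 f$ cancels exactly against the quadratic variation of the martingale $\sum_k \sev f\,\dee W_t^{(k)}$. The remaining terms arise from commutators $[\partial_x^\alpha\partial_v^\beta\brak{v}^{m/2},\sev]$, which produce at worst factors $|k|^{|\alpha|+1}$ together with lower powers of $\brak{v}$; summability in $k$ against $|\sigma_k|^2$ is guaranteed by the coloring condition \eqref{coloring}, the margin $\sigma' > \sigma+4$ absorbing the second-order nature of the It\^o correction and the weight/double commutators it forces. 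The deterministic terms are classical: $v\cdot\nabla_x$ closes in $H^\sigma_m$ thanks to the polynomial $v$-weight (its commutator with $\partial_v^\beta$ is lower order in the weight), $\nu\cL f_R$ is non-positive up to manageable lower-order commutators, and $\theta_R E\cdot\nabla_v f_R$ is bounded by $C_R \|f_R\|_{H^\sigma_m}^2$ using the Sobolev embedding $H^{s_0}_{m_0}\embeds L^\infty$ (valid since $s_0 > d/2+1$ and $m_0 > d$) to control $\|E\|_{W^{\sigma,\infty}}$ via $\rho$. Burkholder-Davis-Gundy on the martingale and Gronwall's inequality then yield $\E\sup_{t\leq T}\|f_R(t)\|_{H^\sigma_m}^2 \leq C(R,T,\|f_0\|_{H^\sigma_m})$.

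For pathwise uniqueness (and hence the consistency of the $f_R$), I would apply It\^o's formula to $\|f-g\|_{L^2_{m'}}^2$ at some lower weight $m' \leq m$ for two local solutions $f,g$ with the same initial data. Stochastic transport again cancels at the $L^2$ level by the same skew-adjointness, the Fokker-Planck term is dissipative, and the nonlinear difference splits as
\[
E_f\cdot\nabla_v f - E_g\cdot\nabla_v g = (E_f-E_g)\cdot\nabla_v f + E_g\cdot\nabla_v(f-g);
\]
the first piece is controlled by $\|E_f-E_g\|_{L^\infty}\|\nabla_v f\|_{L^2_{m'}}$ (with $\|E_f-E_g\|_{L^\infty} \lesssim \|f-g\|_{L^2_{m'}}$ from Sobolev embedding on $\rho$) and the second by integration by parts using $\Div_v E_g = 0$. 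Before $\tau_R\wedge\tau_{R'}$ the right-hand side is $\leq C_R\|f-g\|_{L^2_{m'}}^2$, and Gronwall forces the difference to vanish.

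The main obstacle is the high-regularity stochastic estimate of the second paragraph: verifying that the fragile skew-adjoint cancellation between the Stratonovich-It\^o correction (a genuine second-order differential operator in $v$) and the martingale quadratic variation persists at order $|\alpha|+|\beta|\leq\sigma$ against the polynomial weight $\brak{v}^{m/2}$, and that all commutator error terms are simultaneously summable in $k$ under \eqref{coloring} and polynomially bounded in $\|f_R\|_{H^\sigma_m}$. Once this bookkeeping is executed, the nonlinear Poisson contribution, uniqueness, and patching follow in a fairly standard manner from the deterministic local theory.
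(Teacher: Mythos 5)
Your outer skeleton (cut-off $\theta_R$ at the $H^{s_0}_{m_0}$ level, stopping times $\tau_R$, pasting via pathwise uniqueness) matches the paper's, but the core of your construction rests on a claim that fails: the cut-off nonlinearity is \emph{not} globally Lipschitz on $H^\sigma_m$. The map $f\mapsto \theta_R(\norm{f}_{H^{s_0}_{m_0}})E[f]\cdot\grad_v f$ loses one derivative (it is Lipschitz only from $H^\sigma_m$ into $H^{\sigma-1}_m$; compare \eqref{ACL:1:e:f:e}, where the difference estimate at level $\sigma$ requires $\norm{f^n}_{H^{\sigma+1}_m}$). Energy estimates close at top order thanks to the commutator/integration-by-parts structure \eqref{prod:rule:2}, but a Picard fixed point in a space of continuous adapted $H^\sigma_m$-valued processes does not, and a Galerkin-plus-compactness route at the level of $H^\sigma_m$ data only delivers $L^\infty_t H^\sigma_m$ (weak continuity), not the $C_t H^\sigma_m$ regularity demanded by Definition \ref{local:pathwise}. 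This is precisely why the paper inserts an additional mollification $\varphi_\eps\ast E$ in \eqref{SVPFP:reg:reg}, solves the resulting linear-in-$f^{j+1}$ equations by stochastic characteristics, proves the Lagrangian iterates are Cauchy via a Borel--Cantelli/factorial-series argument (Lemma \ref{iter:Cauchy}), and then removes $\eps$ by $\eps$-uniform estimates (Lemma \ref{comp:epsilon}), Skorokhod embedding, pathwise uniqueness, and Gy\"ongy--Krylov to obtain Lemma \ref{glbl:reg}.

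A second, related omission: because of the same derivative loss, the paper cannot run the whole argument at the regularity of the data. It first constructs solutions from smoother data $H^{\sigma'}_{m'}$, $\sigma'>\sigma+4$, $m'>m+3$ (Section \ref{sec:iii}), and only then recovers $C_t H^\sigma_m$ solutions for $H^\sigma_m$ data by mollifying the initial condition, proving the abstract Cauchy criterion of Lemma \ref{lemma:ACL}, and controlling the problematic product $\norm{f^n-f^l}_{H^{\sigma-1}_m}^2\norm{f^n}_{H^{\sigma+1}_m}^2$ through a stochastic product rule together with the quantitative mollifier rate \eqref{ineq:RnCopt} (Lemma \ref{ACL:ver}). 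Your proposal has no counterpart to this step, yet it is where continuity in the top norm --- the statement actually being proved --- is obtained. (Two smaller points: your uniqueness estimate purely in $L^2_{m'}$ needs $\norm{E_f-E_g}_{L^\infty}\lesssim \norm{f-g}_{L^2_{m'}}$, which fails for $d=2,3$ since $\rho_f-\rho_g\in L^2_x$ only gives $E_f-E_g\in H^1_x\not\subset L^\infty_x$; the paper runs uniqueness in $H^{s_0}_{m_0}$ for solutions bounded in a higher norm. Also, the margin $\sigma'>\sigma+4$ is consumed by the loss of derivatives in the temporal-continuity spaces of the two-tier construction, not by the It\^o--Stratonovich correction, which cancels exactly at top order.)
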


\begin{remark} \label{rmk:SMF}
Our proof also applies when there is a stochastic magnetic field.
We may similarly treat the case of independent electric and magnetic fields as the following, for example (denoting $c > 0$ the speed of light), 
\begin{align*}
W_t(x) & \mapsto \sum_{k \in \mathbb K} \sigma_k^{(E)} e_k(x) W_t^{k;E} \\ & \quad + \frac{1}{c}v \times \sum_{\ell \in \mathbb Z^3_0} \sigma_\ell^{(B;1)} e_{(\ell,1)}(x) W_t^{(\ell,1);B} + \frac{1}{c}v \times \sum_{\ell \in \mathbb Z^3_0} \sigma_\ell^{(B;2)} e_{(\ell,2)}(x) W_t^{(\ell,2);B},
\end{align*}
with
\begin{align}
\sum_{k =(\ell,j) \in \mathbb K} \abs{k}^{2\sigma'}\left(\abs{\sigma_k^{(E)}}^2 +  \abs{\sigma_\ell^{(B;1)}}^2 + \abs{\sigma_\ell^{(B;2)}}^2\right) < \infty, \label{ineq:bkabs}
\end{align}
or when electromagnetic fields are correlated, for example one could use forcing of the following potentially natural form
\begin{align*} 
W_t & \mapsto \sum_{\ell \in \mathbb Z^3_0} \sigma_\ell^{(1)} \left( e_{(\ell,1)}(x) + \frac{v \times e_{(\ell,2)}(x)}{c} \right) W_t^{(\ell,1)}  + \sum_{\ell \in \mathbb Z^3_0} \sigma_\ell^{(2)} \left( e_{(\ell,2)}(x) + \frac{v \times e_{(\ell,1)}(x)}{c} \right) W_t^{(\ell,2)}. 
\end{align*}
Sufficiently regular-in-space deterministic external electromagnetic fields or random fields that are smoother in time than white noise (for example, Ornstein-Uhlenbeck processes and variations thereof as in the Langevin antenna forcing used in the plasma physics literature \cite{tenbarge2014oscillating}) can also be easily included in the analysis without any significant changes. For simplicity of presentation, we will mainly focus on the case of external electric fields and simply make comments about what changes when considering an additional magnetic field.
\end{remark} 

\begin{remark}
The methods of this paper can also deal with with more general mean-field interactions, replacing the self-consistent electric field with:
\begin{align*}
	E = \grad_x K \ast (\rho - 1), 
\end{align*}
for any kernel $K$ such that $\norm{\grad_x^{j+1} E}_{L^p} \lesssim_{p} \norm{\brak{\grad_x}^{j} \rho}_{L^p}$ for all $1 < p < \infty$.
\end{remark}

\begin{remark}
  It should be straightforward to extend to $d \geq 4$. 
  It should also be possible to treat non-integer $\sigma$, $s_0$, and $\sigma'$, however, this would require more delicate (anisotropic) commutator estimates. 
\end{remark}

\begin{remark} \label{rmk:Landau}
  We believe our methods could be extended to the Landau collision operators for initial data $f_0$ sufficiently close to a global Maxwellian to prove local-in-time existence and uniqueness of strong solutions to e.g. the Vlasov--Poisson--Landau equations with stochastic external electromagnetic fields. 
  This extension may be considered in future work. 
%In particular, the leading order terms in the linearized Landau collision operator admit a stochastic Lagrangian representation, and so we believe our procedure will work in this re
\end{remark}

\begin{remark}
In light of the classical deterministic theory of bounded solutions of the Vlasov equations (see e.g. \cites{lions1991propagation,loeper2006uniqueness}), it is natural to expect an analogue of \cite{brzezniak2016existence} in kinetic theory.
Similarly, we expect local (and global) existence and uniqueness of the Vlasov-Fokker-Planck equations using only e.g. $f_{0} \in L^2_m$.
These extensions may be considered in future work. 
\end{remark}

Finally, in Section \ref{sec:HR} we present a proof of the following hypoelliptic regularization result.
This is proved using a time-dependent hypocoercivity norm in the spirit of \cite{dric2009hypocoercivity}.  
\begin{theorem} \label{thm:HR}
Let $f$ be a maximal pathwise solution to \eqref{SVPFP} as in Theorem \ref{thm:main}.
Suppose that for all $N$ there holds 
\begin{align*}
\abs{\sigma_{k}} \lesssim_N \abs{k}^{-N}. 
\end{align*}
Then if $\nu > 0$, then $f(t) \in C^\infty_{x,v}$ for all $t \in (0,\xi)$. 
\end{theorem}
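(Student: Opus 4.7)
The plan is to adapt the time-dependent hypocoercivity functional of Villani \cite{dric2009hypocoercivity} to the stochastic setting of \eqref{SVPFP} and argue by induction on the regularity index $k$: assuming $f(t) \in H^{k-1}_{m_{k-1}}$ almost-surely on the open interval $(0,\xi)$, I promote this to $f(t) \in H^k_{m_k}$ for an appropriate sequence $m_k \to \infty$. The base case $k = \sigma$ is provided by Theorem \ref{thm:main}. Because smoothness is a purely local-in-time statement, it suffices to obtain uniform control on a fixed stochastic interval $[0, T \wedge \tau_n]$ from Definition \ref{def:MPS} and then let $n, T \to \infty$.

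For each $k \geq 1$, I introduce a functional of the Villani form
\begin{align*}
\Phi_k(t, f) &:= \norm{\brak{v}^{m_k/2} f}_{L^2}^2 + \sum_{|\alpha|+|\beta| = k} t^{|\beta| + 3|\alpha|} \norm{\brak{v}^{m_k/2} \partial_x^\alpha \partial_v^\beta f}_{L^2}^2 \\
&\quad + \varepsilon \sum_{\substack{|\alpha|+|\beta| = k \\ |\beta| \geq 1}} t^{|\beta|+3|\alpha|+1} \sum_{j=1}^d \brak{ \brak{v}^{m_k}\partial_x^\alpha \partial_v^\beta f,\, \partial_x^{\alpha+e_j} \partial_v^{\beta - e_j} f }_{L^2}
\end{align*}
with $\varepsilon > 0$ small. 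The cross terms are tuned so that, when $\frac{d}{dt}$ hits them and distributes through the commutator $[v \cdot \grad_x, \partial_{v^j}] = -\partial_{x^j}$, the result is an effectively coercive quadratic form in $\partial_x^{\alpha+e_j}\partial_v^{\beta - e_j} f$ with weight $t^{|\beta|+3|\alpha|}$, matching the Villani scaling for one $v$-derivative fewer and one $x$-derivative more. This is the standard mechanism by which $\grad_v$-coercivity from the Fokker--Planck operator $\cL$ propagates into $\grad_x$-coercivity.

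Applying It\^o's formula to $\Phi_k(t, f(t))$, the contributions split into three families. The deterministic drifts $v \cdot \grad_x f$ and $\nu \cL f$, after integration by parts and the cross-term mechanism, produce the coercive combination
$$-c \sum_{|\alpha|+|\beta| = k} t^{|\beta|+3|\alpha|} \bigl(\nu\norm{\grad_v \partial_x^\alpha \partial_v^\beta f}_{L^2_{m_k}}^2 + t^{-1}\norm{\partial_x^{\alpha+e_j}\partial_v^{\beta -e_j} f}_{L^2_{m_k}}^2\bigr),$$
plus lower-order commutator errors from $\grad_v \cdot (v \cdot)$ and from the polynomial weight $\brak{v}^{m_k}$, all absorbed using the induction hypothesis. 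The Poisson nonlinearity $E \cdot \grad_v f$ is distributed by Leibniz; the critical top-order term is $(\partial_x^\alpha E) \cdot \grad_v f$ paired against $\partial_x^\alpha f$ with weight $t^{3k}$ (case $|\alpha|=k$, $|\beta|=0$). One uses elliptic regularity $\norm{\partial_x^\alpha E}_{L^2_x} \leqc \norm{\partial_x^{|\alpha|-1}\rho}_{L^2_x}\leqc \norm{f}_{H^{k-1}_{m_0}}$, so that this term is bounded by an induction-level quantity, combined with the Sobolev embedding bound $\norm{\grad_v f}_{L^\infty_{x,v}} \leqc \norm{f}_{H^{s_0}_{m_0}}$. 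Mixed and lower-order commutators are handled analogously, producing a closed estimate of the form $\leqc (1+\norm{f}_{H^{s_0}_{m_0}}^2)\Phi_k + \textup{(induction)}$.

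For the stochastic contributions, the Stratonovich--It\^o correction $\tfrac{1}{2}\sum_k(\sigma_k e_k\cdot\grad_v)^2 f$ is a symmetric nonnegative second-order operator in $v$, and when paired against $\partial_x^\alpha \partial_v^\beta f$ in the energy estimate it combines with the It\^o correction $d\brak{f}_t$ from the martingale to cancel exactly at principal order (the classical cancellation for stochastic transport), leaving commutator residuals bounded by the coloring hypothesis $\sum_k \abs{k}^{2\sigma'} \abs{\sigma_k}^2 < \infty$; the martingale is handled by BDG. Assembling everything yields a Gronwall inequality
$$\EE \Phi_k(t \wedge \tau_n) \leqc 1 + \int_0^{t}(1 + \EE \norm{f(s \wedge \tau_n)}_{H^{s_0}_{m_0}}^2)\,\EE\Phi_k(s \wedge \tau_n)\, \ds,$$
which, together with the a priori control on $\norm{f}_{H^{s_0}_{m_0}}$ on $[0, \tau_n]$, gives $\Phi_k(t) < \infty$ almost-surely on $(0,\xi)$, hence $f(t) \in H^k_{m_k}$ a.s. Sending $k \to \infty$ and using Sobolev embedding yields $f(t) \in C^\infty_{x,v}$. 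The main obstacle is the simultaneous balancing of (i) the Villani time weights against the nonlocal Poisson nonlinearity, where the gain that $E$ is one derivative smoother than $\rho$ is what keeps the top-order nonlinear contribution at induction level, and (ii) the stochastic commutators, which are controllable only because $\sigma_k$ decays faster than any polynomial, as ensured by the hypothesis of Theorem \ref{thm:HR}.
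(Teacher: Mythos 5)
Your overall strategy is the same as the paper's (Section \ref{sec:HR}): a Villani-type time-weighted hypocoercivity functional, It\^o's formula plus BDG and Gr\"onwall with the low norm frozen by a cutoff/stopping time, cancellation of the It\^o correction against the quadratic variation, the elliptic gain for $E$, and a bootstrap gaining one derivative at a time. The differences (graded weights $t^{|\beta|+3|\alpha|}$ with an induction in $k$ on a single interval, versus the paper's weights $1, at, bt^2, ct^3$ placed on each top-order derivative and iterated by restarting Proposition \ref{prop:reg} at positive times; stopping times versus the $\theta_R$-cutoff equation with a cutting argument) are organizational. However, two points in your functional $\Phi_k$, as written, would make the Gr\"onwall step fail without modification.

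First, the coefficient ordering is backwards: you put unit coefficients on all squared terms and a small $\varepsilon$ on the cross terms. When $\partial_t$ hits the weight of the square at an index $(\alpha,\beta)$ it produces $(|\beta|+3|\alpha|)\, t^{|\beta|+3|\alpha|-1}\norm{\pab f}_{L^2_{m_k}}^2$ with an $O(k)$ prefactor, whereas the coercive term of matching strength generated by the commutator in the cross terms carries only the prefactor $\varepsilon$; absorption requires the coefficient of each square to be much \emph{smaller} than that of the cross term designed to coerce it, i.e. a hierarchy like $c \ll b \ll a \ll 1$ in \eqref{constants:hypo} (index-dependent in your graded setting), not the reverse. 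Second, for the pure-velocity top index ($\alpha = 0$, $|\beta| = k$) the weight-derivative term $k\, t^{k-1}\norm{\grad_v^k f}_{L^2_{m_k}}^2$ has no absorber inside $\Phi_k$ at all: it must be paid for by the Fokker--Planck dissipation at level $k-1$, which is absent from your functional, and your stated induction hypothesis ($f \in H^{k-1}_{m_{k-1}}$ almost surely) does not provide the time-integrability $\EE \int_0^{T\wedge\tau_n} t^{k-2}\norm{\grad_v^k f}_{L^2_{m_k}}^2 \ds < \infty$ that this requires. You must either carry the dissipation integral from the previous induction step, include all tiers $|\alpha|+|\beta| \leq k$ with their weights (the full graded hierarchy), or, simplest, use the paper's lighter functional, which weights only one extra derivative over the already-controlled level and climbs the regularity ladder by restarting at later times.
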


\section{Outline} \label{sec:outline}
Let us outline the general idea of how to prove Theorem \ref{thm:main} and then provide the details in the main body of the text.
See Section \ref{sec:HR} for how to prove Theorem \ref{thm:HR}. 

As in e.g. \cites{debussche2012global,GV14}, we first construct solutions to \eqref{SVPFP} with smoother initial data $H^{\sigma'}_{m'}$ with $\sigma' > \sigma+4$ and $m' > m + 3$ (both integers) with trajectories in $L^\infty_{t,loc} H^{\sigma'}_{m'} \cap C_{t,loc} H^{\sigma}_m$.
This procedure is done in Section \ref{sec:iii}. 
Then we regularize the initial condition and pass to the limit to obtain solutions with initial data in $H^\sigma_m$ that take values in $C_{t,loc} H^{\sigma}_m$.
In addition to obtaining solutions with lower regularity, what is more important for many purposes, is that this constructs solutions which take values continuously in the highest regularity available.
This latter procedure is done in Section \ref{sec:iv}. 

To construct maximal pathwise solutions to \eqref{SVPFP} we first introduce a standard trick for regularizing the nonlinearity in a way which allows to close necessary probabilistic moment estimates. 
Consider a smooth nonnegative and nonincreasing function $\theta: [0,\infty) \to \mathbb{R}$ such that:
\begin{equation}
	\theta(x) = \begin{cases}
		1 & \text{ if } x\leq 1,\\ 
		0 & \text{ if } x \geq 2,\end{cases}
\end{equation}
and define:
\begin{equation}
	\theta_R(x) \equiv \theta\left(\frac{x}{R}\right).
\end{equation}
Then we define the regularized SPDE
\begin{equation}
\dee f + v\cdot\nabla_xf\dt + \theta_{R}(\norm{f}_{H^{s_0}_{m_0}}) E\cdot\nabla_vf\dt + \nabla_vf\odot\dee W_{t} = \nu \mathcal{L} f \dt, \label{SVPFP:reg}
\end{equation}
We show in Section \ref{sec:iii} that this SPDE admits global-in-time, unique, pathwise solutions (i.e. $\xi = \infty$ with probability $1$ in the definition of maximal solutions) starting from $H^{\sigma'}_{m'}$ initial conditions.
Specifically, we prove the following. 
\begin{lemma}\label{glbl:reg}
	Let $f_0$ be a $\mathcal{F}_0$-measurable random variable such that $\forall p \geq 2$, 
    \begin{align*}
\EE \norm{f_0}_{H^{\sigma'}_{m'}}^p < \infty. 
\end{align*}
    Then, there exists an  $f\in C([0,\infty); H_{m'-3}^{\sigma'-4})\cap L_{t,\text{loc}}^\infty([0,\infty);H_{m'-1}^{\sigma'-1})$ $\P$--a.s. which is a solution to \eqref{SVPFP:reg} in the sense that  
\begin{equation}
	f(t) = f_0 + \int_0^t\left(-v\cdot\nabla_xf(s)-\theta_{R}(\norm{f}_{H^{s_0}_{m_0}}) E(s)\cdot\nabla_vf(s)+\nu\cL f(s)\right)\ds-\int_0^t\nabla_vf(s)\circ\dee W_s. \quad \P\text{--a.s.}, 
\end{equation}
where the equality holds in $C([0,\infty);H^{\sigma'-4}_{m'-3})$.
Moreover, if $\tilde{f}$ is any other solution in the above sense, then $f = \tilde{f}$ almost surely in the sense that
\begin{align*}
\mathbb P \left( f(t) - \tilde{f}(t) = 0 \quad \forall 0 \leq t < \infty | f(0) = \tilde{f}(0) \right) = 1. 
\end{align*}
\end{lemma}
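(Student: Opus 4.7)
The plan is to follow the Galerkin approximation $+$ compactness/Skorokhod $+$ Gy\"ongy--Krylov pipeline that has by now become standard for quasilinear SPDEs (as in \cite{debussche2012global,GV14}), adapted to the kinetic setting. The cutoff $\theta_R$ renders the nonlinear coefficient bounded in a way compatible with $H^\sigma_m$-type energy estimates, so that the main difficulty is not in propagating the bound in time but in correctly tracking the commutators generated by the stochastic transport $\nabla_v f\odot\dee W_t$ and the velocity weight $(1+|v|^2)^{m'/2}$.

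First I would construct finite-dimensional approximations $f_N$. A natural choice is to project onto the first $N$ Fourier modes in $x$ and the first $N$ Hermite functions in $v$ (to respect the weighted $L^2$ structure), yielding a finite-dimensional It\^o SDE whose coefficients are globally Lipschitz thanks to $\theta_R$. Standard SDE theory gives global strong solutions $f_N$. The next step, and the main analytic work, is the uniform a priori estimate: I apply the It\^o formula to $\|f_N\|_{H^{\sigma'}_{m'}}^{2p}$ and track the contributions term by term. The transport $v\cdot\nabla_x f$ is handled by integration by parts and by commuting $\pab$ and the weight $(1+|v|^2)^{m'/2}$ with $v\cdot\nabla_x$, which each consume one velocity moment and trade one $v$-derivative for one $x$-derivative (absorbed inside $H^{\sigma'}_{m'}$); this is where the loss between $(m,\sigma)$ and $(m',\sigma')$ originates. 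The nonlinear term $\theta_R(\|f_N\|_{H^{s_0}_{m_0}})\, E_N\cdot\nabla_v f_N$ is uniformly bounded as a multiplier because $\theta_R$ forces $\|f_N\|_{H^{s_0}_{m_0}}\le 2R$ and $\|E_N\|_{W^{\sigma',\infty}}\lesssim\|\rho_N\|_{H^{\sigma'}}\lesssim_m\|f_N\|_{H^{\sigma'}_{m_0}}$ (using $m_0>d$ to integrate out $v$). The Fokker--Planck term contributes nonnegative dissipation if $\nu>0$ and vanishes for $\nu=0$.

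The delicate point is the stochastic transport. In It\^o form it produces the correction $\tfrac12\sum_k(\sev)^2 f\,\dt$, whose contribution to $\tfrac{d}{dt}\|f\|_{L^2}^2$ is $-\sum_k \|\sigma_k e_k\cdot\nabla_v f\|_{L^2}^2$; the quadratic variation of the martingale $\int\nabla_v f\cdot\dee W_s$ contributes exactly the opposite sign, so the two cancel. At level $H^{\sigma'}_{m'}$ the same cancellation holds modulo commutators $[\pab\langle v\rangle^{m'/2},\sev]$, which are of order $|\alpha|$ in $x$ (through $\partial_x^\alpha e_k$) and zero in $v$ (since $e_k$ is $v$-independent). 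These commutators produce Fourier coefficients of size $|k|^{|\alpha|+1}|\sigma_k|$, explaining the requirement $\sigma'>\sigma+4$ together with \eqref{coloring}. Combining everything and applying Burkholder--Davis--Gundy and a stochastic Gr\"onwall inequality gives, for every $p\ge 2$ and $T>0$,
\begin{equation*}
\EE \sup_{0\le t\le T}\|f_N(t)\|_{H^{\sigma'}_{m'}}^{p}\;\le\;C(R,T,p)\bigl(1+\EE\|f_0\|_{H^{\sigma'}_{m'}}^{p}\bigr),
\end{equation*}
uniformly in $N$, with analogous bounds using the Fokker--Planck dissipation when $\nu>0$.

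From this point the argument is abstract. Combining the uniform bound with a fractional H\"older-in-time estimate (obtained directly from the SPDE, with temporal regularity $1/2^-$ for the martingale piece and $1$ for the drift) and an Aubin--Lions--Simon type compactness argument yields tightness of the laws of $\{f_N\}$ on $C([0,T];H^{\sigma'-4}_{m'-3})$, with the loss of four derivatives and three weights being exactly what is needed to accommodate the compact embedding together with the four-derivative loss produced by the commutator/coloring considerations above. Jakubowski's version of the Skorokhod representation theorem then produces, on a new probability space, a.s.\ convergent subsequences, and the usual martingale identification (passing to the limit in the weak form and using the uniform moments to identify quadratic variations) yields a martingale solution with trajectories in $C_tH^{\sigma'-4}_{m'-3}\cap L^\infty_{t,\loc}H^{\sigma'-1}_{m'-1}$; the $L^\infty$ statement is a weak-$*$ lower semicontinuity consequence of the uniform bound. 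Pathwise uniqueness follows by applying It\^o to $\|f^{(1)}-f^{(2)}\|_{L^2_{m}}^2$ for the difference of two solutions: the Stratonovich--It\^o/quadratic variation cancellation again eliminates the stochastic transport; the difference of the nonlinear terms is controlled because $E$ depends linearly on $\rho$ and $\theta_R$ is Lipschitz while $f^{(1)},f^{(2)}$ are bounded in $H^{\sigma'}_{m'}$, so Gr\"onwall closes. Finally the Gy\"ongy--Krylov lemma upgrades martingale existence $+$ pathwise uniqueness to a pathwise solution on the original stochastic basis. The hardest step, as noted, is the uniform high-regularity estimate with the correct identification of the Stratonovich--It\^o cancellation across commutators with $\pab$ and the velocity weight.
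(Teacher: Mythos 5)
Your route (Galerkin $+$ Skorokhod $+$ Gy\"ongy--Krylov in one pass) is genuinely different from the paper's, which never uses a Galerkin scheme: the paper first mollifies the electric field ($\varphi_\eps\ast E$ in \eqref{SVPFP:reg:reg}), solves that equation by a Lagrangian/Picard iteration via stochastic characteristics and proves the iterates are Cauchy directly (Lemmas \ref{comp:it}, \ref{iter:Cauchy}), and only then invokes Skorokhod and Gy\"ongy--Krylov to remove the mollifier. Your plan would be more direct if it worked, but it has two concrete gaps. First, the Galerkin projection itself: a Hermite projection in $v$ is orthogonal in unweighted $L^2_v$ but not self-adjoint in $L^2_{m'}$ with the polynomial weight $\brv^{m'}$, and the exact cancellation between the It\^o--Stratonovich correction $\tfrac12\sum_k P_N(\sev P_N(\sev f_N))$ and the quadratic variation $\sum_k\|P_N(\sev f_N)\|^2_{L^2_{m'}}$ --- on which your entire a priori estimate hinges --- relies on precisely that self-adjointness. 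You would also need the projection to interact controllably with $v\cdot\grad_x$ and with $\pab$ at the top order $|\alpha|+|\beta|=\sigma'$, where the dangerous commutator terms $\partial_x^{\alpha'}e_k\cdot\grad_v\partial_x^{\alpha-\alpha'}\partial_v^\beta f$ carry the \emph{same} total number of derivatives as $\pab f$ plus one in $v$; in the paper these only close because of a further integration by parts exploiting the symmetry of $e_k\otimes e_k$ (the cancellation between \eqref{c:1:it} and \eqref{c:7:it}), a structure the projection would disrupt. This is not a cosmetic issue; it is plausibly why the paper avoids Galerkin.

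Second, your pathwise uniqueness argument at the $L^2_m$ level does not close. The difference of the nonlinearities contains the term
\begin{equation*}
\left(\theta_R(\norm{f^{(1)}}_{H^{s_0}_{m_0}})-\theta_R(\norm{f^{(2)}}_{H^{s_0}_{m_0}})\right)E^{f^{(1)}}\cdot\grad_v f^{(1)},
\end{equation*}
and the Lipschitz bound on $\theta_R$ produces a factor $\norm{f^{(1)}-f^{(2)}}_{H^{s_0}_{m_0}}$, which is \emph{not} controlled by $\norm{f^{(1)}-f^{(2)}}_{L^2_m}$; boundedness of both solutions in $H^{\sigma'}_{m'}$ only lets you interpolate down to a sublinear power of the $L^2_m$ distance, and Gr\"onwall with a sublinear right-hand side does not yield uniqueness. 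This is exactly why the paper runs the uniqueness estimate at the level of $H^{s_0}_{m_0}$ (Lemma \ref{uniqueness:reg}), respectively $H^{\sigma'-1}_{m'-1}$ (Lemma \ref{uniqueness:reg:reg}), where \eqref{prod:rule:1}--\eqref{prod:rule:2} close the estimate. The fix is to redo your difference estimate in $H^{s_0}_{m_0}$. A final minor point: the threshold $\sigma'>\sigma+4$ does not come from the coloring of the noise as you suggest, but from the bookkeeping of derivative/weight losses through the successive limits (the $W^{\alpha,p}_tH^{\sigma'-2}\hookrightarrow C_tH^{\sigma'-4}_{m'-3}$-type embeddings used for tightness and for continuity of trajectories).
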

It is clear that solutions of \eqref{SVPFP:reg} are also solutions to \eqref{SVPFP} for as long as $\norm{f}_{H_{m_0}^{s_0}} < R$, and so by considering the increasing sequence of $R = n$
and defining the stopping times
\begin{align*}
\tau_n = \inf \set{ t \geq 0: \norm{f(t)}_{H^{s_0}_{m_0}} > n }, 
\end{align*}
we may use \eqref{SVPFP:reg} to construct local pathwise solutions to \eqref{SVPFP}.
A standard cutting procedure (described below) also shows how to remove the moment requirement on the initial data.  
\begin{lemma}\label{lem:locHiRegSVPFP}
Let $f_0$ be an $\mathcal{F}_0$-measurable random variable with $f_0 \in H^{\sigma'}_{m'}$ almost surely.
Then, Lemma \ref{glbl:reg} implies the pathwise existence and uniqueness of a maximal solution $(f,\tau)$ to \eqref{SVPFP} with initial data $f_0$ with trajectories $f$ satisfying
\begin{align*}
f(\cdot \wedge \tau) \in L^\infty_{t,loc}(0,\infty;H^{\sigma'-1}_{m'-1}) \cap C_{t,loc}([0,\infty);H^{\sigma'-4}_{m'-3}). 
\end{align*}
\end{lemma}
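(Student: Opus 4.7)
The plan is to remove the moment assumption on the initial data by cutting on $\mathcal{F}_0$ and then to undo the nonlinear cut-off $\theta_R$ by stopping at the first exit from a ball in $H^{s_0}_{m_0}$. For each $k \in \N$ set $\Omega_k := \set{\norm{f_0}_{H^{\sigma'}_{m'}} \leq k}$ and $f_0^{(k)} := f_0 \1_{\Omega_k}$. Each $f_0^{(k)}$ is $\mathcal{F}_0$-measurable and uniformly bounded in $H^{\sigma'}_{m'}$, so Lemma \ref{glbl:reg} applies and delivers, for every pair $(n,k) \in \N^2$, a unique global solution $f^{(n,k)}$ of \eqref{SVPFP:reg} with cut-off $R = n$ and initial datum $f_0^{(k)}$, with trajectories in $L^\infty_{t,loc} H^{\sigma'-1}_{m'-1} \cap C_{t,loc} H^{\sigma'-4}_{m'-3}$ almost surely.

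Next, define the candidate stopping times
\begin{equation*}
\tau_n^{(k)} := \inf\set{ t \geq 0 : \norm{f^{(n,k)}(t)}_{H^{s_0}_{m_0}} \geq n }.
\end{equation*}
These are genuine stopping times because $f^{(n,k)}$ is continuous into $H^{\sigma'-4}_{m'-3}$ and bounded into $H^{\sigma'-1}_{m'-1}$, so by interpolation $t \mapsto \norm{f^{(n,k)}(t)}_{H^{s_0}_{m_0}}$ is lower semicontinuous and adapted. On $[0, \tau_n^{(k)}]$ the truncation factor equals $1$, so $f^{(n,k)}$ in fact solves the unregularized system \eqref{SVPFP}. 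Pathwise uniqueness in Lemma \ref{glbl:reg} then yields three compatibilities: (i) $f^{(n,k)}$ and $f^{(n,k+1)}$ are indistinguishable on $\Omega_k$, since their data agree there; (ii) $f^{(n,k)}$ and $f^{(n+1,k)}$ both solve \eqref{SVPFP:reg} with cut-off $R = n+1$ on $[0, \tau_n^{(k)}]$ (the larger cut-off is inactive there), so they coincide on that interval and $\tau_n^{(k)} \leq \tau_{n+1}^{(k)}$; (iii) combining (i) and (ii), $\tau_n^{(k)}$ stabilises in $k$ on $\Omega_k$. One can therefore unambiguously define $\tau_n := \tau_n^{(k)}$ and $f := f^{(n,k)}$ on $\Omega_k \setminus \Omega_{k-1}$, yielding a local pathwise solution $(f, \tau_n)$ of \eqref{SVPFP} with initial datum $f_0$ and the asserted trajectory regularity, inherited directly from Lemma \ref{glbl:reg}.

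Setting $\xi := \lim_n \tau_n$, on $\set{\tau_n < \infty}$ lower semicontinuity of the norm gives $\norm{f(\tau_n)}_{H^{s_0}_{m_0}} \geq n$, so the blow-up criterion of Definition \ref{def:MPS} holds on $\set{\xi < \infty}$, and $(f, \set{\tau_n}, \xi)$ is a maximal pathwise solution. For uniqueness, any second maximal solution $(\tilde f, \set{\tilde\tau_n}, \tilde\xi)$ with the same initial datum satisfies the $R = n$ regularized equation on $[0, \tau_n \wedge \tilde\tau_n]$, and the uniqueness clause of Lemma \ref{glbl:reg}, applied on each $\Omega_k$ via the same patching argument, forces $f = \tilde f$ there; taking $n \to \infty$ yields indistinguishability on $[0, \xi \wedge \tilde\xi)$.

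The main subtlety I anticipate is the measurability and null-set bookkeeping, particularly checking that $\tau_n^{(k)}$ is a legitimate stopping time given that $f^{(n,k)}$ is only continuous into a space strictly weaker than $H^{s_0}_{m_0}$; this is resolved by the combination of the $L^\infty_t H^{\sigma'-1}_{m'-1}$ bound with continuity into $H^{\sigma'-4}_{m'-3}$, which is enough for lower semicontinuity of $\norm{f^{(n,k)}(t)}_{H^{s_0}_{m_0}}$. Beyond this, the argument is a standard cut-off / stopping-time construction and the heavy lifting has already been done in Lemma \ref{glbl:reg}.
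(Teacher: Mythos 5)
Your proposal is correct and takes essentially the same route as the paper: truncate the data according to the size of $\norm{f_0}_{H^{\sigma'}_{m'}}$, invoke Lemma \ref{glbl:reg} for the regularized equation \eqref{SVPFP:reg}, stop before the cutoff $\theta_R$ activates so that the solution solves \eqref{SVPFP}, and patch the pieces together via the pathwise uniqueness of Lemma \ref{glbl:reg} (the paper merely organizes the same ingredients differently, treating bounded data with a single $R=M+1$ first, obtaining maximality via the supremum of all local existence times, and decomposing general data at the end). The only small repairs needed are that in your uniqueness step the comparison should run up to the exit times of $\tilde f$ from the ball of radius $n$ in $H^{s_0}_{m_0}$ rather than up to $\tilde\tau_n$ (Definition \ref{def:MPS} does not guarantee $\norm{\tilde f(t)}_{H^{s_0}_{m_0}} \leq n$ for $t \leq \tilde\tau_n$), and that comparing $f^{(n,k)}$ with $f^{(n+1,k)}$ on $[0,\tau_n^{(k)}]$ uses the uniqueness clause of Lemma \ref{glbl:reg} localized to a stochastic interval, which is not literally what is stated but follows from its proof and is used at the same level of informality in the paper's own continuation argument.
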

\begin{proof}
First consider the case that $\norm{f_0}_{H^{\sigma'}_{m'}} < M$ almost-surely.
Then, we choose $R=M+1$ in \eqref{SVPFP:reg}, and define the stopping time:
\begin{equation*}
	\tau = \inf\{t\geq 0: \, \norm{f(t)}_{H_{m}^{\sigma}} > R\},
\end{equation*}
where $f$ is the solution to \eqref{SVPFP:reg} with initial data $f_0$, guaranteed to exist and be unique from Lemma \ref{glbl:reg}. Note that up to time $\tau$, the process $f$ also solves \eqref{SVPFP}, since for $t<\tau$ we have $\|f(t)\|_{H_{m_0}^{s_0}} \leq \|f(t)\|_{H_m^\sigma} \leq R$ and therefore $\theta_R(\|f(t)\|_{H_{m_0}^{s_0}})=1.$
Clearly $\tau >0$ almost surely since $H_{m'-2}^{\sigma'-2} \subset H_{m}^{\sigma}$ and $f$ takes values continuously in $H_{m'-2}^{\sigma'-2}$. The pair $(f,\tau)$ is thus a local solution of \eqref{SVPFP} within the higher regularity framework of this lemma, which is unique by Lemma \ref{uniqueness:reg} below. Now we will extend $f$ to a maximal solution.

Let $\cT$ be the collection of all stopping times corresponding to a local solution and define $\xi= \sup \cT$. Define also:
\begin{equation*}
	\tau_n := \inf\{t\geq 0: \, \norm{f(t)}_{H_{m_0}^{s_0}}>n\}.
\end{equation*}
Fix $T>0$ finite but arbitrary and assume $\P(\xi = \tau_n\wedge T)>0$ for some $n$. This implies that
\begin{align*}
\sup_{t\leq \xi}\norm{f(t)}_{H_{m_0}^{s_0}}\leq n \textup{ on the set } \{\xi=\tau_n\wedge T\},
\end{align*}
and thus $f$ can be continued to a solution of \eqref{SVPFP:reg} with $R=n+1$ and thus of \eqref{SVPFP} up to a stopping time past $\xi$ - contradicting $\xi$'s maximality. Since $T$ was arbitrary, we either have $\xi = \infty$, or $\tau_n<\xi<\infty$ for all $n$. In the latter case, we also get $\sup_{t< \xi}\norm{f(t)}_{H_{m_0}^{s_0}}\geq n$ for all $n$ and thus $\sup_{t< \xi}\norm{f(t)}_{H_{m_0}^{s_0}}=\infty.$

Now we drop the almost-sure uniform boundedness requirement. 
If $\norm{f_0}_{H_{m'}^{\sigma'}}<\infty$ almost surely, we decompose $f_0=\sum_{k=0}^\infty f_{0,k},$ where $f_{0,k} := \mathbbm{1}_{\{k\leq \norm{f_0}_{H_{m'}^{\sigma'}} < k+1\}} f_0$. Now each $f_{0,k}$ generates a maximal solution $(f_k,\tau_k)$ where $\tau_k$ is the corresponding maximal existence time, and we define the \say{total} maximal solution (in high regularity) of \eqref{SVPFP} as $(\bar{f},\bar{\tau})$ with:
\begin{gather} \label{eq:recon}
	\bar{f} = \sum_{k=0}^\infty \mathbbm{1}_{\{k\leq \norm{f_0}_{H_{m'}^{\sigma'}}<k+1\}}(\omega)f_k,\\
	\bar{\tau} = \sum_{k=0}^\infty \mathbbm{1}_{\{k\leq \norm{f_0}_{H_{m'}^{\sigma'}}< k+1\}}(\omega)\tau_k.
\end{gather}
\end{proof} 

Solutions to \eqref{SVPFP:reg} are constructed using a two-step procedure.
First, we regularize the nonlinearity again and use an iteration procedure to construct a solution to the regularized SPDE and then second, we pass to the limit in the additional regularization parameter.
Let $\varphi \in C^\infty_c(B(0,2))$ with $\int_{\mathbb R^n} \varphi \dee x = 1$ and define $\varphi_\eps = \eps^{-d} \varphi(\eps^{-1} \cdot)$.
Specifically, we seek a solution to the following regularized SPDE (here the convolution in $x$ has been periodized), 
\begin{align}
	& \dee \tilde f + v\cdot\nabla_x\tilde f\dt + \nabla_v\tilde f\circ \dee W_t + \theta_R(\norm{\tilde f}_{H_{m_0}^{s_0}}) (\varphi_\eps \ast \tilde E) \cdot\nabla_v\tilde f = \nu\cL \tilde f\dt   \label{SVPFP:reg:reg} \\
    & \tilde{E} = \grad_x (-\Delta_x)^{-1}\left( \int_{\mathbb R^d} \tilde f(t,\cdot,v) \dee v - 1 \right).  \notag
\end{align}
This is done by an iteration method, specifically the following 
\begin{align}
& f^j(0) = f_0 \\ 
& \dee f^{0} + v\cdot\nabla_xf^{0}\dt + \nabla_vf^{0}\odot\dee W_{t} = \nu \mathcal{L} f^{0}\dt \label{SVPFP:Step0iter}\\ 
& \dee f^{j+1} + v\cdot\nabla_xf^{j+1}\dt + \theta_R(\norm{f^j}_{H^{s_0}_{m_0}})(\varphi_\eps \ast E[f^j]) \cdot\nabla_vf^{j+1}\dt + \nabla_vf^{j+1}\odot\dee W_{t} = \nu \mathcal{L} f^{j+1}\dt, \label{SVPFP:iter}
\end{align}
where we denote
\begin{align*}
E[f] := \grad_x (-\Delta_x)^{-1}\left( \int_{\mathbb R^d} f(t,\cdot,v)  \dee v - 1 \right). 
\end{align*}
The solutions to \eqref{SVPFP:Step0iter} - \eqref{SVPFP:iter} are constructed by the method of characteristics.
Indeed, \eqref{SVPFP:iter} is the forward Kolmogorov equation associated to the SDE
\begin{equation}
\begin{cases}\dee X_t = V_t\dt\\
\dee V_t = -\nu V_t \dt + \theta_R(\norm{f^j}_{H^{s_0}_{m_0}}) (\varphi_\eps \ast E[f^j])(t,X_t) \dt  + \sqrt{2 \nu }\dee \tilde{W}_t + \sum_{k}\sigma_{k}e_{k}(X_t) \circ \dee W_t^{(k)}, 
\end{cases}\label{schar}
\end{equation}
where $(\tilde{W}_t)$ is a $d$-dimensional Brownian motion defined in a new stochastic basis $(\Omega',\mathcal{F}', \P')$ (independent of the original basis). 
That is, \eqref{schar} are the \textit{stochastic characteristics} corresponding to \eqref{SVPFP:iter}, which generates a global stochastic flow of volume-preserving diffeomorphisms $\phi_{t}$ on $\mathbb{T}^d\times\mathbb{R}^d$, defined on the product space
$$(\Omega \times \Omega', \mathcal{F}\otimes \mathcal{F}', \P\times \P'),$$ which map $H^{\sigma'}_{m'}$ back to itself for all finite times almost-surely. 
The multiplicative (linear!) SPDE \eqref{SVPFP:iter} is then solved by a \say{partial Feynman-Kac} formula with respect to $\P':$
\begin{equation}
 f^{j+1} = \E_{\P'} f_0 \circ \phi_{t}^{-1}. \label{FK}
\end{equation}
See \cite{kunita1997stochastic} for more details.
\begin{remark}
Note that this type of regularization procedure has the added benefit of retaining non-negativity of $f$ as well as the preservation of the Casimir conservation laws, e.g. if $\nu = 0$ then $\norm{f^j}_{L^p} = \norm{f_0}_{L^p}$ and for $\nu > 0$ one at least has $\norm{f^j}_{L^p} \leq e^{d \nu t}\norm{f_0}_{L^p}$.
However, these properties do not play an important role here. 
\end{remark}
Next, we need uniform a priori estimates to enable passing $j \to \infty$.
These are obtained via Eulerian energy methods and come out as $\forall p < \infty$, $\alpha \in (0,1/2)$, and $T <\infty$, 
\begin{align*}
& \sup_{j \geq 1} \EE \norm{f^j}_{L^\infty(0,T;H^{\sigma'}_{m'})}^p  \lesssim_{p,T,\eps} 1 \\ 
& \sup_{j \geq 1} \EE \norm{f^j}_{W^{\alpha,p}(0,T;H^{\sigma'-2}_{m'-1})}^p \lesssim_{p,T,\eps,\alpha} 1. 
\end{align*}
See Lemma \ref{comp:it} for the proof of these estimates.
Several previous works, for example \cites{debussche2012global,GV14,brzezniak2020well} have used compactness to pass to similar limits, extract martingale solutions (i.e. probabilistically weak) using the Skorokhod embedding theorem, and then subsequently upgrade these solutions using a Gy\"ongy-Krylov lemma \cite{gyongy1996existence} argument and pathwise uniqueness.
However, this technique seems not to apply in a clear manner to the Lagrangian iteration \eqref{SVPFP:iter}.
Instead, we prove directly that there is a stopping time $\xi$ which is almost-surely greater than $1$ such that $\set{f^j}_{j \geq 0}$ forms a Cauchy sequence in $L^2(\Omega;C_{t}([0,\xi);H^{s_0}_{m_0}))$, at which point it is not hard to pass to the limit, iterate in $t$, and construct global solutions to \eqref{SVPFP:reg:reg} in the desired regularity classes.
This is proved in Lemma \ref{iter:Cauchy}, where, in analogy with a classical Picard iteration, we show that $f^{j+1}-f^j$ is nearly comparable in size to the $j$-th term of a power series in powers of $\sqrt{t}$ of the solution.
This procedure finally yields 
\begin{lemma}\label{glbl:reg:reg}
	Let $f_0$ be an $\F_0$-measurable random variable such that $\forall p \geq 2$, 
    \begin{align*}
\EE \norm{f_0}_{H^{\sigma'}_{m'}}^p < \infty. 
\end{align*}
    Then, there exists an $f\in C([0,\infty); H_{m'-2}^{\sigma'-3})\cap L_{t,\text{loc}}^\infty([0,\infty);H_{m'}^{\sigma'})$ $\P$--a.s. which is a solution to \eqref{SVPFP:reg:reg} in the sense that , $\P$--a.s.: 
\begin{equation}
	f(t) = f_0 + \int_0^t\left(-v\cdot\nabla_xf(s)-\theta_{R}(\norm{f}_{H^{s_0}_{m_0}}) \varphi_\eps*E(s)\cdot\nabla_vf(s)+\nu\cL f(s)\right)\ds-\int_0^t\nabla_vf(s)\circ\dee W_s, 
\end{equation}
where the equality holds in $C([0,\infty);H^{\sigma'-3}_{m'-2})$.
Moreover, if $\tilde{f}$ is any other solution in the above sense, then $f = \tilde{f}$ almost surely in the sense that
\begin{align*}
\mathbb P \left( f(t) - \tilde{f}(t) = 0 \quad \forall 0 \leq t < \infty | f(0) = \tilde{f}(0) \right) = 1. 
\end{align*}
\end{lemma}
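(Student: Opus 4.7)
The plan follows the outline already sketched in Section \ref{sec:outline}: realize each iterate $f^{j+1}$ of \eqref{SVPFP:iter} via the stochastic characteristics \eqref{schar} and the partial Feynman-Kac formula \eqref{FK}, establish uniform-in-$j$ a priori bounds in $L^p_\omega L^\infty_{t,\loc} H^{\sigma'}_{m'}$ together with fractional-in-time regularity, and then show that $\{f^j\}$ is Cauchy in $L^2(\Omega; C([0,\tau]; H^{s_0}_{m_0}))$ for some almost-surely positive stopping time $\tau$. The structural feature making this strategy work is that the cutoff $\theta_R$ together with the mollifier $\varphi_\eps$ render the nonlinear drift $\theta_R(\norm{f}_{H^{s_0}_{m_0}}) \varphi_\eps \ast E[f]$ uniformly bounded in every Sobolev norm, so that the high-regularity energy estimates do not depend on the iterate's own high-regularity norm. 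Once a local solution is constructed, global existence follows by iteration in time, and pathwise uniqueness follows by applying the same difference estimate to two candidate solutions.

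For the construction of the iterates, I would invoke Kunita's theory \cite{kunita1997stochastic} for \eqref{schar}: the SDE coefficients are smooth in $(x,v)$ with uniform spatial bounds (the drift $\theta_R \varphi_\eps \ast E[f^j]$ is uniformly in $C^k_x$ by mollification and cutoff; the Stratonovich noise $\sum_k \sigma_k e_k(X_t) \circ \dee W^{(k)}_t$ has smooth coefficients by \eqref{coloring}), and the linear-in-$v$ Fokker-Planck drift is handled by standard moment estimates. This yields a global stochastic flow of smooth volume-preserving diffeomorphisms $\phi_t$ on $\mathbb T^d \times \mathbb R^d$, and $f^{j+1}(t) := \EE_{\PP'}[f_0 \circ \phi_t^{-1}]$ defines the iterate with trajectories in $C_t H^{\sigma'-3}_{m'-2} \cap L^\infty_t H^{\sigma'}_{m'}$; the velocity weight $\langle v \rangle^{m'/2}$ is preserved because $\phi_t^{-1}$ shifts $v$ by a bounded-in-$x$, finite-moment amount. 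Uniform a priori bounds on $\norm{f^j}_{H^{\sigma'}_{m'}}^{2p}$ then follow by It\^o's formula: the $v \cdot \grad_x$ term produces commutators controlled by the extra velocity weight ($m' > m+3$); the mollified electric drift is uniformly bounded by $\theta_R$ and $\varphi_\eps$; the Fokker-Planck term is dissipative; and the Stratonovich-to-It\^o correction $\tfrac{1}{2}\sum_k (\sigma_k e_k \cdot \grad_v)^2 f^j$ cancels the quadratic variation generated in the energy identity (the stochastic transport is skew-adjoint in $L^2_v$), up to commutators with $\pab$ that are lower order by \eqref{coloring}. BDG and Grönwall then close the estimate, giving the uniform bounds asserted in Lemma \ref{comp:it}.

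The main obstacle is proving that $\{f^j\}$ is Cauchy. The difference $g^{j+1} := f^{j+1} - f^j$ satisfies a linear Stratonovich SPDE with the same principal part as \eqref{SVPFP:iter} and forcing
\begin{align*}
- \bigl[\theta_R(\norm{f^j}_{H^{s_0}_{m_0}})\, \varphi_\eps \ast E[f^j] - \theta_R(\norm{f^{j-1}}_{H^{s_0}_{m_0}})\, \varphi_\eps \ast E[f^{j-1}]\bigr] \cdot \grad_v f^j.
\end{align*}
Splitting this by adding and subtracting and using the Lipschitz property of $\theta_R$, the $\eps$-dependent smoothing of $\varphi_\eps$, and the mapping properties of the Newtonian potential, the forcing is bounded in $H^{s_0}_{m_0}$ by $C(\eps,R) \norm{g^j}_{H^{s_0}_{m_0}} \norm{f^j}_{H^{s_0+1}_{m_0+1}}$. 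Introducing a stopping time $\tau_M$ on which the uniform a priori bounds are realized by $M$, an It\^o energy estimate on $\norm{g^{j+1}}_{H^{s_0}_{m_0}}^2$ gives
\begin{align*}
\EE \sup_{t \leq \tau \wedge \tau_M} \norm{g^{j+1}(t)}_{H^{s_0}_{m_0}}^2 \leq C(\eps,R,M) \int_0^\tau \EE \sup_{s \leq r \wedge \tau_M} \norm{g^j(s)}_{H^{s_0}_{m_0}}^2 \dr.
\end{align*}
Iterating this inequality yields the factorial bound $(C\tau)^j / j!$ --- the power-series-in-$\sqrt t$ structure to which the excerpt alludes --- and hence summability. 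The resulting limit $f$ is continuous in $H^{s_0}_{m_0}$ on $[0, \tau \wedge \tau_M]$; lower semicontinuity of $\norm{\cdot}_{H^{\sigma'}_{m'}}$ under the uniform a priori bounds promotes it to $L^\infty_t H^{\sigma'}_{m'}$, and one verifies it solves \eqref{SVPFP:reg:reg} in the stated sense by passing to the limit in each term of the integral equation. A standard patching argument sends $M \to \infty$ and iterates in time (the estimates do not degenerate, since the drift is uniformly bounded by $\theta_R$ and $\varphi_\eps$) to produce a global $\P$-almost-sure solution. Pathwise uniqueness follows from the same energy estimate applied to the difference of two candidate solutions.
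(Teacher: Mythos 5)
Your overall route is the same as the paper's: construct the iterates via stochastic characteristics, prove uniform-in-$j$ Eulerian energy estimates (using that $\theta_R$ and $\varphi_\eps$ make the drift bounded in every Sobolev norm with $(\eps,R)$-dependent constants), prove the sequence is Cauchy in $L^2_\omega C_t H^{s_0}_{m_0}$ by a Picard-type factorial gain rather than by compactness/Skorokhod, iterate in time, and close uniqueness with the same difference estimate. The structure of your difference estimate is also right: the forcing for $g^{j+1}=f^{j+1}-f^j$ splits into a $\theta_R$-difference piece and an $E$-difference piece, both controlled by $\norm{g^j}_{H^{s_0}_{m_0}}$ times one extra derivative of $f^j$, i.e.\ $\norm{f^j}_{H^{s_0+1}_{m_0}}$, exactly as in the paper's $\mathcal{N}_2,\mathcal{N}_3$.

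The genuine gap is the sentence ``Introducing a stopping time $\tau_M$ on which the uniform a priori bounds are realized by $M$.'' The a priori bounds of Lemma \ref{comp:it} are only moment bounds, $\sup_j \EE\sup_{t\le T}\norm{f^j}^p_{H^{\sigma'}_{m'}} \le C$; they do not give a single almost-surely finite constant bounding $\sup_j\sup_{t\le \tau}\norm{f^j(t)}_{H^{s_0+1}_{m_0}}$ on a common time interval of positive length. If you define $\tau_M:=\inf\{t:\ \sup_j\norm{f^j(t)}_{H^{s_0+1}_{m_0}}>M\}$, nothing you have stated rules out $\tau_M=0$ with positive probability for every $M$: a Chebyshev-plus-union bound over the infinitely many $j$ does not converge with a $j$-independent threshold. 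This is precisely where the paper inserts an extra step (Lemma \ref{lem:BC}): Chebyshev with a $\brak{j}^{\delta}$ slack and Borel--Cantelli give an almost-surely finite random constant $C_0(\omega)$, with polynomial tails, such that $\sup_{s<1}\norm{f^j(s)}_{H^{\sigma'}_{m'}}\le C_0\brak{j}^\delta$ for \emph{all} $j$, and the stopping times must then carry $j$-dependent thresholds, $\tau_n=\inf\{t:\ \exists j:\ \norm{f^j(t)}_{H^{s_0+1}_{m_0}}>n\brak{j}^\delta\}$, whose positivity (indeed $\PP(\tau_n\ge 1)\to 1$) follows from the tail bound on $C_0$. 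The price is that your clean bound $(C\tau)^j/j!$ becomes $(K_0 n^4 t)^j j^{4\delta j}/j!$, and summability then requires choosing $\delta$ small (the paper takes $\delta<1/6$) and invoking Stirling to dominate $j^{5\delta j}$ by $j!$. Without this (or an equivalent device turning the moment bounds into a pathwise bound uniform over the countable family of iterates), the induction producing the factorial gain cannot be launched on a common interval, so as written the crux of your Cauchy argument is unjustified. The remaining steps (passing to the limit in each term, restarting in time, and uniqueness via the same energy estimate with stopping times built from the $L^\infty_t H^{\sigma'}_{m'}$ bound) match the paper and are fine.
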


The next step in the proof of Lemma \ref{glbl:reg} is to remove the  superfluous mollifier $\varphi_\eps$, which  begins with obtaining $\eps$-independent estimates (now indexing solutions to \eqref{SVPFP:reg:reg} by $\eps$), 
\begin{align*}
& \sup_{\eps \in (0,1)} \EE \norm{f_{\eps}}_{L^\infty(0,T;H^{\sigma'}_{m'})}^p  \lesssim_{p,T,R} 1 \\ 
& \sup_{\eps \in (0,1)} \EE \norm{f_{\eps}}_{W^{\alpha,p}(0,T;H^{\sigma'-2}_{m'-2})}^p \lesssim_{p,T,R,\alpha} 1. 
\end{align*}
See Lemma \ref{comp:epsilon} for the proof of these estimates.
These estimates can be considered the probabilistic analogue of the common deterministic method of sharpening a continuation criterion a posteriori, specifically, the thrust of the estimates is to show that the $H^{s_0}_{m_0}$ norm controls all $H^{\sigma'}_{m'}$ norms for $m' > m_0$ and $\sigma' > m_0$. 
At this step, it does not seem straightforward to prove that $\set{f_\eps}_{\eps \in (0,1)}$ is Cauchy, and so we follow the martingale approach.
Specifically, we use these uniform bounds to apply the Skorokhod embedding theorem to produce probabilistically weak solutions to \eqref{SVPFP:reg} (see Proposition \ref{martingale:reg} below). 
These solutions are subsequently upgraded to probabilistically strong solutions by proving pathwise uniqueness (Lemma \ref{uniqueness:reg}) and an application of the Gy\"ongy-Krylov lemma (from \cite{gyongy1996existence}; see Lemma \ref{GK} below). 
This general procedure is rather standard at this point; see for example \cites{debussche2012global,GV14,brzezniak2020well}. 
This step completes the proof of Lemma \ref{glbl:reg}. 

The final step in the proof of Theorem \ref{thm:main}  is to pass to a suitable limit in order to construct solutions in $C_t H^\sigma_m$ from $H^\sigma_m$ initial data, which is done in Section \ref{sec:iv}. 
We perform a regularization procedure on the initial data by defining a sequence of initial conditions
\begin{align*}
f_{0;n} = \mathcal{R}^n f_0 \equiv \theta_n(v) n^{2d} \eta\left( \frac{\cdot}{n} \right) \ast_{x,v} f_0,
\end{align*}
where $\eta \in C^\infty_c(\mathbb R^{2d})$ and satisfies $\eta \geq 0$ and $\int_{\mathbb R^{2d}} \eta \dee x \dee v= 1$.
Note these have been both mollified and cut-off in velocity (to improve both regularity and localization). 
For all $f_0 \in H_m^\sigma \cap L^1_+$, we hence have $f_{0;n} \in H^{\sigma'}_{m'} \cap L^1_+$ for all $\sigma',m'$.
Subsequently, there are unique local pathwise solutions to \eqref{SVPFP} $(f_n,\tau_n)$ with
\begin{align*}
f_n(\cdot \wedge \tau_n) \in L^\infty_{loc}(0,\infty;H^{\sigma'}_{m'}) \cap C([0,\infty); H^\sigma_m). 
\end{align*}
By obtaining suitable uniform-in-$n$ upper bounds on the $C_t H^\sigma_m$ norm, we may pass to the limit $n \to \infty$ and hence extract local pathwise solutions to the original problem in $C_t H^\sigma_m$; see Lemmas \ref{lemma:ACL} and \ref{ACL:ver} for details.

\subsection*{Notation and conventions}
For technical reasons, it is sometimes necessary (particularly when passing to the limit in the proofs of Lemmas \ref{martingale:reg} and \ref{glbl:reg}) to view the fluctuating field as coming from a \textit{cylindrical} Wiener process. Specifically, let $\mathfrak{U}$ be a separable Hilbert space, with an orthonormal basis $(g_k)_{k\in\mathbb{K}}.$ We can formally define a cylindrical Wiener process $\mathcal{W}_t$ on $\mathfrak{U}$ by the formula \begin{equation*}
	\mathcal{W}_t := \sum_{k\in\mathbb{K}} g_k W_t^{(k)}.
\end{equation*}
Since this sum is divergent on $\mathfrak{U},$ one frequently employs the larger Hilbert space:
\begin{gather*}
	\mathfrak{U}_0 := \left\{\sum_k \alpha_k g_k : \sum_k k^{-2}\alpha_k^2<\infty \right\},\\
	\left\|\sum_k\alpha_kg_k\right\|_{\mathfrak{U}_0}^2 = \sum_k k^{-2}\alpha_k^2,
\end{gather*}
where it can be shown that the formal sum for $\mathcal{W}_t$ converges and defines a process whose paths are almost surely in $C([0,T);\mathfrak{U}_0)$. Moreover, the embedding $\mathfrak{U} \subset \mathfrak{U}_0$ is Hilbert--Schmidt. For any separable Hilbert space $X$, we denote the space of all Hilbert--Schmidt operators from $\mathfrak{U}$ to $X$ by $L_2(\mathfrak{U};X)$; the definition of this space is:
\begin{gather*}
	L_2(\mathfrak{U};X) := \left\{T\in L(\mathfrak{U};X): \sum_k \|Tg_k\|_{X}^2 <\infty\right\},\\
	\norm{T}_{L_2(\mathfrak{U};X)}^2 = \sum_k \|Tg_k\|_{X}^2.
\end{gather*} 
For more details on cylindrical Wiener processes and the relevant functional analytic setting, see \cite{da2014stochastic}. 

At various points, we use the notation $f \in L^{p-}$ to signify that $f$ is in any $L^q$ space for $q<p$.

We often employ the common notation:
\begin{equation*}
	A(f) \lesssim_{p_1,p_2,\dots} B(f)
\end{equation*} 
which means that there exists a constant $C>0$ depending only on the parameters $p_1,p_2,\dots$ but \textit{not} on the argument $f$, such that $A(f) \leq CB(f)$ for all relevant $f$. We omit the parameters if they are unimportant or clear from the context.

For the velocity-weighted $L^2$ norms and inner products, we set:
\begin{gather*}
	\brak{f,g}_m := \iint_{\mathbb T^d \times \mathbb R^d}f(x,v)g(x,v)\brv^m\dv\dx\\
	\|f\|_{L_m^2}^2 := \brak{f,f}_m.
\end{gather*}

Finally, at various points we use the mixed weighted norms:
\begin{gather*}
	\|f\|_{L_{v,n}^2L_x^p} := \left(\int_{\mathbb{R}^d}\left[\int_{\mathbb{T}^d}|f(x,v)|^p\dx\right]^{2/p} \brv^n\dv\right)^{1/2},\notag\\
	\|f\|_{L_{v,n}^2H_x^s} := \left(\sum_{|\alpha|\leq s}\int_{\mathbb{R}^d}\int_{\mathbb{T}^d} |\partial_x^\alpha f(x,v)|^2\brv^n\dx\dv\right)^{1/2}.
\end{gather*}
\section{Very smooth solutions and pathwise uniqueness} \label{sec:iii}
\subsection{Proof of Lemma \ref{glbl:reg:reg}}

As discussed in Section \ref{sec:outline}. a key step in proving Lemma \ref{glbl:reg:reg} is constructing a convergent sequence of approximate solutions derived from a Lagrangian iteration scheme for \eqref{SVPFP:reg:reg}. In particular, consider a sequence $f^j$ defined inductively as:
\begin{equation}
	\begin{cases}
		\dee f^0 + v\cdot\grad_xf^0\dt + \grad_v f^0\circ \dee W_t = \nu \cL f^0\dt,\\
		\dee f^{j+1} + v\cdot\grad_xf^{j+1}\dt + \grad_v f^{j+1}\circ \dee W_t = \nu \cL f^{j+1}\dt - \theta_R(\|f^j\|_{H_{m_0}^{s_0}})(\varphi_\eps \ast E^j)\cdot\grad_vf^{j+1}\dt,\\
		f^j(0) = f_0.
	\end{cases}\label{iteration}
\end{equation}
As discussed in Section \ref{sec:outline}, for a given $f^{j} \in f \in C([0,\infty); H_{m'-2}^{\sigma'-3})\cap L_{t,loc}^\infty([0,\infty),H_{m'}^{\sigma'})$, the solution $f^{j+1}$ is constructed via the method of stochastic characteristics. 

First, we provide $j$-independent estimates in order to pass to the limit $j \to \infty,$ for which we need appropriate compactness estimates for the iterates $f^j$ defined above.
We remark that studying the stochastic flow of diffeomorphisms could show that  $f^{j} \in f \in C([0,\infty); H_{m'-2}^{\sigma'-3})\cap L_{t,loc}^\infty([0,\infty),H_{m'}^{\sigma'})$, however, providing $j$-independent (and eventually $\eps$-independent) bounds seems to be significantly more complicated than an Eulerian energy method approach, which is hence the approach we take. 
The main ingredient is provided by the following lemma:
\begin{lemma}\label{comp:it}
  Let $(f^j)_{j\geq 1}$ be a sequence of global solutions to the iterative scheme \eqref{iteration} with $\E\norm{f_0}_{H^{\sigma'}_{m'}}^p < M_p <\infty$ for all $p\geq 2.$
    For $\alpha \in (0,\frac{1}{2}), p\geq 2,$ we have the uniform estimates:
	\begin{equation}\sup_{j\geq 1}\E\sup_{t\leq T} \|f^j(t)\|_{H_{m'}^{\sigma'}}^p \leq C_{T,R,\eps,M}\label{unif:bdd:it}
	\end{equation}
and
	\begin{equation}
	\sup_{j\geq 1}\E\|f^j\|_{W^{\alpha,p}([0,T]; H_{m'-1}^{\sigma'-2})}^p \leq C_{T,R,\eps,M}.\label{equic:it}
	\end{equation}
\end{lemma}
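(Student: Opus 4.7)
The plan is to obtain both estimates term-by-term from the It\^o form of \eqref{iteration}, with constants depending on $R,\eps,T,M$ but not on the iteration index $j$. After converting $\nabla_v f^{j+1}\odot dW_t$ to It\^o form, the SPDE for $f^{j+1}$ reads
\[
df^{j+1} + v\cdot\nabla_xf^{j+1}\,dt + \theta_R(\|f^j\|_{H^{s_0}_{m_0}})(\varphi_\eps\ast E^j)\cdot\nabla_v f^{j+1}\,dt + \nabla_v f^{j+1}\cdot dW_t = \nu\cL f^{j+1}\,dt + \tfrac{1}{2}\sum_k (\sev)^2 f^{j+1}\,dt,
\]
and one applies It\^o's formula to $\|f^{j+1}\|_{H^{\sigma'}_{m'}}^2=\sum_{|\alpha|+|\beta|\leq\sigma'}\|\pab f^{j+1}\|_{L^2_{m'}}^2$, controlling each contribution separately. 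The transport term $v\cdot\nabla_x$ and the Fokker--Planck term $\nu\cL$ produce only commutators with $\pab$ and with $\brv^{m'}$, bounded by $C_{\sigma',m'}\|f^{j+1}\|_{H^{\sigma'}_{m'}}^2$ (the principal part of $\cL$ having a good sign for $\nu>0$ and being absent for $\nu=0$).

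The nonlinear drift is where the cutoff is essential: on the support of $\theta_R(\|f^j\|_{H^{s_0}_{m_0}})$ we have $\|\rho^j\|_{L^2}\lesssim R$, and convolution with $\varphi_\eps$ then gives $\|\varphi_\eps\ast E^j\|_{W^{\sigma',\infty}_x}\leq C_{R,\eps}$. Because this bound is \emph{deterministic} and independent of $\|f^j\|_{H^{\sigma'}_{m'}}$, pairing with $\pab f^{j+1}$ and integrating by parts in $v$ (to avoid losing a $v$-derivative) yields a contribution of size $C_{R,\eps}\|f^{j+1}\|_{H^{\sigma'}_{m'}}^2$, which is what makes the estimate uniform in $j$. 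The crucial structural cancellation is between the It\^o correction $\frac{1}{2}\sum_k\sigma_k^2\langle \pab f^{j+1},\pab(e_k\cdot\nabla_v)^2 f^{j+1}\rangle_{m'}$ and the quadratic variation $\frac{1}{2}\sum_k\|\pab(\sev f^{j+1})\|_{L^2_{m'}}^2$: one integration by parts in $v$ reproduces the latter from the former modulo commutators between $\pab$, $\brv^{m'}$ and $\sev$. These commutators involve derivatives of $e_k$ of order at most $\sigma'+1$, so their sum in $k$ is finite exactly by the coloring hypothesis \eqref{coloring}, and is again controlled by $\|f^{j+1}\|_{H^{\sigma'}_{m'}}^2$ (with constant depending on $\sum_k|k|^{2(\sigma'+1)}|\sigma_k|^2$). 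Taking $\sup_{t\leq T}$, bounding the remaining martingale by Burkholder--Davis--Gundy, and applying a stochastic Gr\"onwall inequality in $L^p_\omega L^\infty_t$ (e.g. as in \cite{GV14}) yields \eqref{unif:bdd:it}.

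For the fractional-in-time bound \eqref{equic:it}, I would decompose $f^{j+1}(t)=f_0+A(t)+M(t)$ into drift and martingale parts. The drift $A$ takes values in $H^{\sigma'-2}_{m'-1}$ with $\|A'(t)\|_{H^{\sigma'-2}_{m'-1}}\leq C_{R,\eps}(1+\|f^{j+1}(t)\|_{H^{\sigma'}_{m'}})$ (losing two derivatives from $\cL$ and from the It\^o correction, and one $v$-weight from multiplication by $v$); hence $\E\|A\|_{W^{1,p}_t H^{\sigma'-2}_{m'-1}}^p\lesssim_{R,\eps,T}1$ uniformly in $j$ by \eqref{unif:bdd:it}, and $W^{1,p}\hookrightarrow W^{\alpha,p}$. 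The stochastic integral is handled by the standard Flandoli--G\k{a}tarek-type fractional Sobolev estimate
\[
\E\|M\|_{W^{\alpha,p}_tH^{\sigma'-1}_{m'-1}}^p \lesssim \E\Bigl(\int_0^T\sum_k\|\sev f^{j+1}\|_{H^{\sigma'-1}_{m'-1}}^2\,dt\Bigr)^{p/2},
\]
also finite uniformly in $j$ by \eqref{unif:bdd:it} together with \eqref{coloring}.

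The main obstacle I anticipate is executing the Stratonovich/It\^o cancellation cleanly in the weighted, high-order norm $H^{\sigma'}_{m'}$, since neither $\pab$ nor the weight $\brv^{m'}$ commutes with $\sev$. The commutators produce cross terms involving $\nabla_x^\gamma e_k$ for $|\gamma|\leq\sigma'+1$, and it is precisely the absolute summability $\sum_k|k|^{2\sigma'}|\sigma_k|^2<\infty$ in \eqref{coloring} that controls them; this is what forces the stringent gap $\sigma'>\sigma+4$ between smoothness levels. Beyond this bookkeeping, the argument follows the standard template for SPDEs with transport-type noise, and the uniformity in $j$ is built in from the outset through the cutoff and mollifier.
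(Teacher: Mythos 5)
Your proposal is correct and follows essentially the same route as the paper: It\^o/energy estimates on the weighted norms $\|\partial_x^\alpha\partial_v^\beta f\|_{L^2_{m'}}^2$, with the Stratonovich--It\^o correction cancelling the quadratic variation up to commutators, uniformity in $j$ coming from the cutoff plus mollifier which bounds $\varphi_\eps\ast E^j$ deterministically by $C_{R,\eps}$, BDG plus Gr\"onwall for the moment bound, and the fractional-in-time BDG estimate of \cite{flandoli1995martingale} for \eqref{equic:it}. One small correction: the commutators never involve more than $\sigma'$ $x$-derivatives of $e_k$ --- the noise vector fields $\sigma_k e_k(x)\cdot\nabla_v$ differentiate only in $v$, so integrations by parts add no $x$-derivatives to $e_k$ and all such derivatives come from Leibniz with $|\alpha|\leq\sigma'$ --- hence \eqref{coloring} with exponent $2\sigma'$ suffices, and the stronger summability $\sum_k|k|^{2(\sigma'+1)}|\sigma_k|^2<\infty$ you invoke (which is not assumed) is not actually needed.
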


%Such estimates allow us to establish tightness of the laws $\mathcal{L}(f^j)$ in pathspace, which can be exploited to construct probabilistically weak solutions via the Skorokhod embedding theorem; for instance see \cites{flandoli1995martingale,GV14} and the references therein for similar arguments in the context of stochastic fluid mechanics.
%Note that in contrast to the standard procedures of fluid mechanics, our approximating sequence is not generated by Galerkin truncations of the target equation \eqref{SVPFP}, but by a Lagrangian iteration scheme. 

%\red{[ I claim this proposition is unnecessary!! 
%To prove Lemma \ref{comp:it}, as well as in other points, we will need to estimate various terms involving electric fields (generated via Poisson's equation). We state some preliminary inequalities which will be used freely in what follows:
%\begin{proposition}\label{func:Sob}If $s<\frac{d}{2},$ $\frac{1}{p}=\frac{1}{2}-\frac{s}{d},$ there exists a constant $C>0$ such that for all $g\in H_x^sL_{v,m'}^2$ the following functional Sobolev inequality holds: 
%	\begin{equation}
%		\|g\|_{L_x^p
 %       L_{v,m'}^2} \leq C \|g\|_{H_x^{s}L_{v,m'}^2}.
%	\end{equation}
%	If $s>\frac{d}{2}$, there exists a constant $C>0$ such that if $g\in H_x^s L_{v,m'}^2$ then:
%	\begin{equation}
%		\|g\|_{L_x^\infty L_{v,m'}^2} \leq C\|g\|_{H_x^sL_{v,m'}^2}.
%	\end{equation}
%\end{proposition}
%]} 

Before we begin, let us begin by recalling a few standard estimates.
The first shows how to estimate the density in terms of the distribution function using sufficiently many velocity  moments.
\begin{lemma} \label{interpolation}
For any $m > d$ there exists a constant $C_{m,d}$ such that
\begin{equation}
	\left\|\int f(x,v)\dv\right\|_{L_x^2} \leq C\|f\|_{L_m^2}
\end{equation}
\end{lemma}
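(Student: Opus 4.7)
The plan is to prove this standard weighted interpolation bound by a direct application of the Cauchy--Schwarz inequality in the velocity variable, using the weight $\brv^{m/2}$ as the splitting.

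First I would write, for each fixed $x \in \mathbb{T}^d$,
\begin{equation*}
\int_{\mathbb R^d} f(x,v)\,\dv = \int_{\mathbb R^d} f(x,v)\, \brv^{m/2}\, \brv^{-m/2}\,\dv,
\end{equation*}
and apply Cauchy--Schwarz in $v$ to obtain
\begin{equation*}
\left|\int_{\mathbb R^d} f(x,v)\,\dv\right|^2 \leq \left(\int_{\mathbb R^d} |f(x,v)|^2 \brv^{m}\,\dv\right) \left(\int_{\mathbb R^d} \brv^{-m}\,\dv\right).
\end{equation*}
The second factor on the right is a finite constant $C_{m,d}$ \emph{precisely because} $m > d$, which is exactly the hypothesis of the lemma; this is where the assumption is used.

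Next I would integrate in $x$ over $\mathbb T^d$ and apply Fubini to swap the order of integration:
\begin{equation*}
\left\|\int_{\mathbb R^d} f(\cdot,v)\,\dv\right\|_{L^2_x}^2 \leq C_{m,d} \int_{\mathbb T^d}\int_{\mathbb R^d} |f(x,v)|^2 \brv^m\,\dv\,\dx = C_{m,d}\,\|f\|_{L^2_m}^2,
\end{equation*}
and taking square roots gives the claim. There is no real obstacle here; the only thing to verify is the integrability of $\brv^{-m}$ on $\mathbb R^d$, which holds iff $m > d$, so the hypothesis is both necessary and sufficient for this argument. This same trick will presumably be reused throughout the paper whenever one needs to pass from moment bounds on $f$ to spatial bounds on the density $\rho$.
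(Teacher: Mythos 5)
Your proof is correct: the paper states this lemma without proof as a standard estimate, and your Cauchy--Schwarz splitting with the weight $\brak{v}^{\pm m/2}$, using $\int_{\mathbb R^d}\brak{v}^{-m}\,\dv<\infty$ for $m>d$, is exactly the canonical argument it relies on. Nothing is missing.
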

Next, we recall the following Gagliardo-Nirenberg-Sobolev estimate: for all integers $0 \leq i \leq \sigma$ and functions $f \in H^\sigma$ (in $\mathbb T^d$ or $\mathbb R^n$) there holds
\begin{align}
\sum_{\abs{\alpha} = i} \norm{\partial^\alpha f}_{L^{\frac{2\sigma}{i}}} \leq C \norm{f}_{L^\infty}^{1 - \frac{i}{\sigma}} \left(\sum_{\abs{\alpha} = \sigma}\norm{\partial^\alpha f}_{L^2}\right)^{\frac{i}{\sigma}}. \label{ineq:iGNS}
\end{align}
The next estimate recalls how to adapt Sobolev space product rules to the anisotropic nonlinearity.
We give a proof for the readers' convenience. 
\begin{lemma}\label{prod:rules}
	Let $g\in H_{n_0}^{s-1},f\in H_{n}^{s}$ for some $n_0> d$, $s > \frac{d}{2}+1$, and $n \geq 0$ arbitrary.
    Denote $E^g := \grad_x\Delta_x^{-1}\left(\int g\dv -1 \right).$ 
	Then, for a constant $C$ that does not depend on $g$ or $f$:
	\begin{equation}
		\|E^g\cdot\grad_v f\|_{H_{n}^{s}}\leq C\|g\|_{H_{n_0}^{s-1}}\|\grad_vf\|_{H_{n}^{s}} \label{prod:rule:1}
	\end{equation} 
and
\begin{equation}
	\sum_{|\alpha|+|\beta|\leq s}\left<\pab(E^g\cdot\grad_vf),\pab f\right>_n \leq C(\|E^g\|_{W^{1,\infty}}+\|g\|_{H_{n_0}^{s-1}})\|f\|_{H_n^{s}}^2\label{prod:rule:2}.
\end{equation}
\end{lemma}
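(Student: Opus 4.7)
The plan is to split each inequality into two ingredients: an elliptic-plus-density bound that controls $E^g$ in high regularity in terms of $g$, and an anisotropic Moser/Kato--Ponce-type product rule for the mixed weighted space $H^s_n$.

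First, I would bound $E^g$ in terms of $g$. Since $E^g = \grad_x(-\Delta_x)^{-1}(\rho^g - \overline{\rho^g})$ and $\grad_x(-\Delta_x)^{-1}$ gains one derivative, standard elliptic regularity on $\mathbb T^d$ gives $\|E^g\|_{H^s_x} \lesssim \|\rho^g\|_{H^{s-1}_x}$. Applying Lemma \ref{interpolation} to $\partial_x^\alpha g$ for each $|\alpha|\leq s-1$ (legitimate since $n_0 > d$) yields $\|\rho^g\|_{H^{s-1}_x} \lesssim \|g\|_{H^{s-1}_{n_0}}$, and Sobolev embedding (valid since $s>d/2+1$) then gives $\|E^g\|_{W^{1,\infty}_x}\lesssim \|E^g\|_{H^s_x}\lesssim \|g\|_{H^{s-1}_{n_0}}$.

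For \eqref{prod:rule:1}, since $E^g$ depends only on $x$, Leibniz yields
\[ \pab(E^g\cdot \grad_v f) = \sum_{j}\sum_{\alpha'\leq \alpha}\binom{\alpha}{\alpha'}\,\partial_x^{\alpha-\alpha'}(E^g)_j\,\partial_x^{\alpha'}\partial_v^{\beta+e_j}f. \]
For each term with $k=|\alpha-\alpha'|\in(0,s)$ I would use Hölder in $x$ with conjugate exponents $(2s/k, 2s/(s-k))$ together with the Gagliardo--Nirenberg inequality \eqref{ineq:iGNS} applied separately to $E^g$ in $x$ and to $\partial_v^{\beta+e_j}f(\cdot,v)$ in $x$ (with $v$ as a parameter in the $L^2_{v,n}$ factor), then use Young's inequality to arrive at the anisotropic Moser estimate
\[ \|a\cdot h\|_{H^s_n}\lesssim \|a\|_{L^\infty_x}\|h\|_{H^s_n} + \|a\|_{H^s_x}\|h\|_{L^\infty_x L^2_{v,n}}\]
for any $a=a(x)$ and $h=h(x,v)$; the endpoint cases $k=0,s$ are immediate from Hölder. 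Taking $a=E^g$, $h=\grad_v f$, and using Sobolev $H^s_n\hookrightarrow L^\infty_x L^2_{v,n}$ (valid since $s>d/2$) to absorb the second factor on the right, the estimate above combined with the first step gives \eqref{prod:rule:1}.

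For \eqref{prod:rule:2}, I would exploit a commutator structure in order to save one derivative. Decompose
\[ \pab(E^g\cdot\grad_v f) = E^g\cdot\grad_v\pab f + [\pab,\, E^g\cdot\grad_v]\,f. \]
The inner product of the principal term with $\pab f$ is handled by integration by parts in $v$: because $E^g=E^g(x)$, we have $\grad_v\cdot(E^g\brv^n) = n(E^g\cdot v)\brv^{n-2}$, so
\[ 2\langle E^g\cdot\grad_v \pab f,\,\pab f\rangle_n = -n\iint_{\mathbb T^d\times\mathbb R^d}(E^g\cdot v)\brv^{n-2}|\pab f|^2\dv\dx,\]
which is bounded by $C\|E^g\|_{L^\infty_x}\|f\|_{H^s_n}^2$ after summing over $(\alpha,\beta)$. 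The commutator consists exclusively of terms of the form $(\partial_x^{\alpha-\alpha'}E^g)_j\,\partial_x^{\alpha'}\partial_v^{\beta+e_j}f$ with $|\alpha-\alpha'|\geq 1$, so the number of derivatives on $f$ is $|\alpha'|+|\beta|+1 \leq |\alpha|+|\beta|\leq s$; this is the key gain over \eqref{prod:rule:1}. Running the same Hölder/Gagliardo--Nirenberg analysis but with at least one derivative landing on $E^g$ yields the Kato--Ponce commutator estimate
\[ \|[\pab,\,a\cdot\grad_v]f\|_{L^2_n}\lesssim \|\grad_x a\|_{L^\infty_x}\|\grad_v f\|_{H^{s-1}_n} + \|a\|_{H^s_x}\|\grad_v f\|_{L^\infty_x L^2_{v,n}}. \]
Using $\|\grad_v f\|_{H^{s-1}_n}\leq \|f\|_{H^s_n}$ and (by Sobolev in $x$, using $s>d/2+1$) $\|\grad_v f\|_{L^\infty_x L^2_{v,n}}\lesssim \|f\|_{H^s_n}$, then pairing with $\pab f$ in $L^2_n$ and summing over $|\alpha|+|\beta|\leq s$, gives a commutator contribution $\lesssim (\|E^g\|_{W^{1,\infty}_x}+\|g\|_{H^{s-1}_{n_0}})\|f\|_{H^s_n}^2$, which together with the principal term bound yields \eqref{prod:rule:2}.

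The main obstacle is the anisotropic Moser/Kato--Ponce estimate in the mixed weighted norm $H^s_n$: one must check that Hölder in $x$ interacts cleanly with the $L^2_{v,n}$ factor, and at the smallest admissible $s$ when $d=3$ the Gagliardo--Nirenberg interpolation indices are tight. Because $s>d/2+1$ is strict and $1\leq d\leq 3$, the exponents do fit, though the bookkeeping across the $k=|\alpha-\alpha'|$ strata is somewhat tedious.
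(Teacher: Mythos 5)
Your proposal is sound and reaches both estimates, and for \eqref{prod:rule:2} it is in substance the paper's own argument repackaged: your principal term $E^g\cdot\grad_v\pab f$ handled by moving $\grad_v$ onto the weight $\brv^n$ is exactly the paper's Case 1, your $|\alpha-\alpha_1|=1$ commutator terms are its Case 2, and your "Hölder plus Gagliardo--Nirenberg on the field, Sobolev on $f$" treatment of the terms with two or more derivatives on $E^g$ is its Case 3, which is precisely where the strict inequality $s>\frac d2+1$ is consumed. For \eqref{prod:rule:1} you genuinely diverge: you prove a two-sided GNS/Moser product estimate in the mixed space, whereas the paper avoids interpolation there and runs a direct four-case splitting whose last case ($2\le|\alpha-\alpha_1|\le s-2$) silently uses $d\le 3$ through the requirement $\tfrac d2+\eta\le 2$; your route is slightly heavier on bookkeeping but dimension-robust, and your preliminary bound $\|E^g\|_{W^{1,\infty}}+\|E^g\|_{H^s_x}\lesssim\|g\|_{H^{s-1}_{n_0}}$ via Lemma \ref{interpolation} and elliptic regularity is the same as the paper's. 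One imprecision to fix: the "clean" anisotropic Moser and Kato--Ponce inequalities you state, with low factors $\|h\|_{L^\infty_xL^2_{v,n}}$ resp. $\|\grad_vf\|_{L^\infty_xL^2_{v,n}}$ carrying no $v$-derivatives, do not follow from applying GNS in $x$ pointwise in $v$ when $|\beta|>0$: that argument leaves $\partial_v^\beta$ on the low factor and produces the $L^2_{v,n}L^\infty_x$ ordering of the mixed norm. This is harmless for the lemma, because you immediately dominate the low factor by $\|\grad_vf\|_{H^s_n}$ (resp.\ $\|f\|_{H^s_n}$) using $s>\frac d2$, and the weaker, correctly provable versions (low factor $\|\partial_v^\beta\grad_vf\|_{L^2_{v,n}L^\infty_x}$, with the derivative count closing exactly because $s>\frac d2+1$) suffice; but as literally stated those two auxiliary inequalities are stronger than what your sketch establishes, so either weaken their statements or supply the extra anisotropic interpolation in $v$.
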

\begin{proof}[Proof of Lemma \ref{prod:rules}]
In what follows, $\eta \in(0,\frac{1}{2})$ will be fixed, which implies $H_x^{\frac{d}{2}+\eta}\subset L_x^\infty.$
Denote the multivariate binomial coefficients by
\begin{equation*}
	\nchoosek{\alpha}{\alpha'}:=\frac{\prod_{j=1}^{d}\alpha_j!}{\prod_{j=1}^{d}(\alpha_j')!\prod_{j=1}^{d}(\alpha_j-\alpha_j')!}.
\end{equation*}
We begin the proof of \eqref{prod:rule:1} by using the product rule and the triangle inequality:
	\begin{equation}
		\|E^g\cdot\grad_vf\|_{H_{n}^{s}} \leq \sum_{\substack{|\alpha|+|\beta|\leq s \\ \alpha_1\leq \alpha}}\nchoosek{\alpha}{\alpha_1}\left\|\partial_x^{\alpha-\alpha_1}E^g \cdot\grad_v \partial_x^{\alpha_1}\partial_v^\beta f \right\|_{L_{n}^2}.
	\end{equation}
Now, we split into four separate cases:
\begin{description}
	\item[Case 1 $\alpha_1=\alpha$:] In this case, we use H\"older's inequality, the embedding $H^{d/2+\eta}_x \subset L^\infty_x$, and Lemma \ref{interpolation} 
	\begin{align}
		\|E^g\cdot\grad_v\pab f\|_{L_{n}^2} \leq& \|E^g\|_{L^\infty}\|\grad_v\pab f\|_{L_{n}^2} \notag\\
		\leq& C\|g\|_{H_{n_0}^{d/2-1+\eta}}\|\grad_vf\|_{H_{n}^{s}}.\label{prod:rule:1:1}
	\end{align}
In the following cases, $\alpha_1 < \alpha.$
	\item[Case 2 $|\alpha_1|+|\beta| = |\alpha|+|\beta|-1$ or $|\alpha|+|\beta|=1$:] Here, we have $|\alpha-\alpha_1|=1$, and hence similarly to the previous case
	\begin{align}\|\partial_x^{\alpha-\alpha_1}E^g\cdot\grad_v\partial_x^{\alpha_1}\partial_v^\beta f\|_{L_{n}^2} \leq& \|\grad_x E^g\|_{L^\infty} \|\grad_vf\|_{H_{n}^{s}}\notag\\
		\leq&C \|g\|_{H_{n_0}^{\frac{d}{2}+\eta}}\|\grad_vf\|_{H^s_n}. \label{prod:rule:1:2}
	\end{align}
\item[Case 3 $|\alpha-\alpha_1| = |\alpha|+|\beta| \geq 2$:]
Here, we necessarily have $\alpha_1=\beta = 0,$ and hence (using the Sobolev embedding now on $\grad_v f$), 
\begin{align}
	\|\partial_x^\alpha E^g \cdot\grad_v f\|_{L_{n}^2} \leq& \|\partial_x^\alpha E^g\|_{L^2}    \left\| \left\| \grad_v f \right\|_{L_x^\infty} \right\|_{L_{v,m'}^2}
    \notag\\
	\leq& C\|g\|_{H_{n_0}^{s-1}}\|\grad_vf\|_{H_{n}^{d/2+\eta}}.\label{prod:rule:1:3}
\end{align}
\item[Case 4 $|\alpha_1|+|\beta|\leq |\alpha|+|\beta|-2 $:] Here $|\alpha-\alpha_1|\geq2,$ so:
\begin{align}
	\|\partial_x^{\alpha-\alpha_1}E^g\cdot\grad_v\partial_x^{\alpha_1}\partial_v^\beta f\|_{L_{n}^2} \leq& \|\partial_x^{\alpha-\alpha_1}E^g \|_{L^2_x}\left\| \left \|\grad_v\partial_x^{\alpha_1}\partial_v^\beta f \right\|_{L_x^\infty} \right\|_{L_{v,n}^2}
    \notag\\
	\leq& C\|g\|_{H_{n_0}^{|\alpha-\alpha_1|-1}} \|\grad_v\partial_x^{\alpha_1}\partial_v^\beta f\|_{H_{n}^{d/2+\eta}} \notag\\
	\leq& C\|g\|_{H_{n_0}^{s-1}}\|\grad_vf\|_{H_{n}^{s}}.\label{prod:rule:1:4}
\end{align}
\end{description}
Summing over the various cases, \eqref{prod:rule:1} follows.

The proof of \eqref{prod:rule:2} is similar but just slightly more subtle after using the cancellation that occurs when all of the derivatives land on $\grad_v f$.
First, distribute the derivatives with Leibniz's rule. 
\begin{align}
	\sum_{|\alpha|+|\beta|\leq s} \left<\pab(E^g\cdot\grad_vf),\pab f\right>_{n} \leq \sum_{|\alpha|+|\beta|\leq s}\sum_{\alpha_1\leq\alpha} \nchoosek{\alpha}{\alpha_1} \left|\left<\partial_x^{\alpha-\alpha_1}E^g\cdot\grad_v\partial_x^{\alpha_1}\partial_v^\beta f,\pab f\right>_{n}\right|.
\end{align}
We distinguish the following cases:
\begin{description}
	\item[Case 1 $\alpha_1=\alpha $:] by integrating by parts the $\grad_v$ onto the weight, we have 
	\begin{align}
		\left|\left<E^g\cdot\grad_v\pab f,\pab f\right>_{n}\right| =& \frac{1}{2}\iint_{\mathbb T^d \times \mathbb R^d} E^g\cdot\grad_v|\pab f|^2 (\brv^m) \dv\dx\notag\\
		\leq& C\|E^g\|_{L^\infty}\|\pab f\|_{L_{n}^2}^2.
	\end{align}
	\item[Case 2 $|\alpha-\alpha_1|=1$:] Cauchy-Schwarz gives 
	\begin{align}
		\left|\left<\partial_x^{\alpha-\alpha_1}E^g\cdot\grad_v\partial_x^{\alpha_1}\partial_v^\beta f,\pab f\right>_{n}\right| \leq& \|\partial_x^{\alpha-\alpha_1}E^g\|_{L^\infty}\|f\|_{H_{n}^{s}}^2\notag\\
		\leq& C\|\grad_xE^g\|_{L^\infty}\|f\|_{H_{n}^{s}}^2.
	\end{align}
	\item[Case 3 $|\alpha-\alpha_1| \geq 2$:]
We have:  
	\begin{align}
		&\left|\left<\partial_x^{\alpha-\alpha_1}E^g\cdot\grad_v \partial_x^{\alpha_1}\partial_v^\beta f,\pab f\right>_{n}\right| \notag\\
		\leq& \|\partial_x^{\alpha-\alpha_1}E^g\|_{L^{2\frac{s-1}{|\alpha-\alpha_1|-1}}} \left\|\grad_v\partial_x^{\alpha_1}\partial_v^\beta f \right\|_{L_{v,n}^2 L_x^{2\frac{s-1}{|\alpha_1|+|\beta|}}} \|\pab f\|_{L_{n}^2}\notag\\
		\leq& C\|\grad_x E^g\|_{L^\infty}^{\frac{|\alpha_1|+|\beta|}{s-1}}\|\grad_x E^g\|_{H_x^{s}}^{\frac{|\alpha-\alpha_1|-1}{s-1}} \|f\|_{H_{n}^{s}}^2.
	\end{align}
 In the above we used H\"older's inequality, Gagliardo-Nirenberg interpolation \eqref{ineq:iGNS} on $\grad_xE^g$ and Sobolev embedding on the $\grad_v\partial_x^{\alpha_1}\partial_v^\beta f$ term, where we note that the order of integrability $p:=2\frac{s-1}{|\alpha_1|+|\beta|}$ corresponds to the embedding of $H_x^{\tilde\sigma}$ in $ L_x^{p}$ for $\tilde\sigma$ given by:
\begin{equation}
	\tilde\sigma = \frac{d}{2}\frac{|\alpha-\alpha_1|-1}{s-1}
\end{equation}
which satisfies $\tilde\sigma < |\alpha-\alpha_1|-1$ exactly if $s > \frac{d}{2}+1$.
\end{description}
Summing over the above cases, we obtain \eqref{prod:rule:2}, which completes the proof of Lemma \ref{prod:rules}. 
\end{proof}

Next, we prove Lemma \ref{comp:it}. 
\begin{proof}[Proof of Lemma \ref{comp:it}]
We begin with an estimate for $\|f\|_{H_{m'}^{\sigma'}}^2$, which we then use to derive an estimate for $\|f\|_{H_{m'}^{\sigma'}}^p$, for $p> 2.$ Applying It\^o's formula to $\|\pab f^j\|_{L_{m'}^2}^2$, we have:
\begin{align}
	\dee\|\pab f^j\|_{L_{m'}^2}^2 =& -2\left<\pab(v\cdot\grad_x f^j), \pab f^j\right>_{m'}\dt \notag\\
	&+2\left<\Delta_v\pab f^j, \pab f^j\right>_{m'}\dt \notag\\
	&+2\left<\pab(\Div_v(f^jv)),\pab f^j\right>_{m'}\dt \notag\\
	&-2\left<\theta_R\pab(\varphi_\epsilon*E^{j-1}\cdot\grad_vf^j),\pab f^j\right>_{m'}\dt \notag\\
	&-2\left<\pab(\grad_vf^j\cdot\dee W_t),\pab f^j\right>_{m'} \notag\\
	&+\sum_k\left<\pab[(\sev)^2f^j],\pab f^j\right>_{m'}\dt \notag\\
	&+\sum_k \|\pab (\sev f^j)\|_{L_{m'}^2}^2 \dt \notag \\
	=&: \cT_{\alpha,\beta}(f^j) + \cD_{\alpha,\beta}(f^j) + \F_{\alpha,\beta}(f^j) + \cN_{\alpha,\beta}(f^j) + \cM_{\alpha,\beta}(f^j) + \cC_{\alpha,\beta}(f^j), \label{pab:Ito}
 \end{align}
where
\begin{align*}
\cC_{\alpha,\beta}(f^j) = \sum_k\left<\pab[(\sev)^2f^j],\pab f^j\right>_{m'}\dt + \sum_k \|\pab (\sev f^j)\|_{L_{m'}^2}^2 \dt. 
\end{align*}
Here, the $\cT,\cD,\F,\cN,\cM, \cC$ terms abbreviate transport, dissipation, friction, nonlinear electric field, martingale, and correction contributions, respectively.
We begin by observing that by integration by parts, 
\begin{equation}
	\cT_{\alpha,\beta}(f^j)+\F_{\alpha,\beta}(f^j) \leq C \|f^j\|_{H_{m'}^{|\alpha|+|\beta|}}^2\dt.\label{drift:it}
\end{equation}
Similarly, for the dissipative term, integrating by parts gives:
\begin{align}
 	\cD_{\alpha,\beta}(f^j) =& -2\|\grad_v\pab f^j\|_{L_{m'}^2}^2\dt + \iint_{\mathbb T^d \times \mathbb R^d} |\pab f^j|^2 \Delta_v\brv^{m}\dv\dx\dt\notag\\
 	\leq& -2\|\grad_v\pab f^j\|_{L_{m'}^2}^2\dt + C\|f\|_{H_{m'}^{|\alpha|+|\beta|}}^2\dt \label{diss:it}
\end{align}
Next, turn to the It\^o correction terms, which need to be treated carefully in order to not lose derivatives.
Distributing derivatives gives 
	\begin{align}
	\cC_{\alpha,\beta}(f^j) =& \sum_k \left(\|\pab(\sev)f^j\|_{L_{m'}^2}^2 + \left<\pab(\sev)^2f^j,\pab f^j\right>_{m'}\right)\dt \notag\\
	=&\sum_k\sum_{\alpha'<\alpha} \nchoosek{\alpha}{\alpha'} \sigma_k^2\left<\partial_x^{\alpha-\alpha'}(e_k \otimes e_k) : \grad_v^2 \partial_x^{\alpha'}\partial_v^\beta f^j,\pab f^j\right>_{m'}\dt  \label{c:1:it}\\
	&+\sum_k\sigma_k^2\left<(e_k\otimes e_k):\grad_v^2\pab f^j, \pab f^j\right>_{m'}\dt \label{c:2:it}\\
	&+\sum_k\|\pab(\sev)f^j\|_{L_{m'}^2}^2\dt \label{c:3:it}.
\end{align}
Now, \eqref{c:2:it} provides a term of highest order that cancels the It\^o correction $\|\pab(\sev)f^j\|_{L_{m'}^2}^2$, and terms of lower order that can either be readily controlled by $\|f^j\|_{H_{m'}^{\sigma'}}^2$ or cancel out with a corresponding term in \eqref{c:1:it}.
Integrating by parts in \eqref{c:2:it} we have, 
\begin{align}
	&\eqref{c:2:it} = \sigma_k^2\left<(e_k\otimes e_k):\grad_v^2 \pab f^j, \pab f^j\right>_{m'} \notag\\
	=&-\left<\sev\pab f^j,\sev\pab f^j\right>_{m'} \notag\\
	&- \iint_{\mathbb T^d \times \mathbb R^d} \pab f^j (\sev\pab f^j) \sev(\brv^{m'})\dv\dx\notag\\
	=& -\left<\pab(\sev f^j), \sev \pab f^j\right>_{m'} - \left<[\sev,\pab]f^j, \sev \pab f^j\right>_{m'} \notag\\
	&+\frac{1}{2}\iint_{\mathbb T^d \times \mathbb R^d} |\pab f^j|^2 (\sev)^2(\brv^{m'})\dv\dx\notag\\
	=& -\|\sev (\pab f^j)\|_{L_{m'}^2}^2\notag \\
	&- \left<\pab (\sev f^j), [\sev,\pab]f^j\right>_{m'} - \left<[\sev,\pab]f^j,\sev \pab f^j\right>_{m'}\notag\\
	&+\frac{1}{2}\iint_{\mathbb T^d \times \mathbb R^d} |\pab f^j|^2 (\sev)^2(\brv^{m'})\dv\dx\notag\\
	=&-\|\sev(\pab f^j)\|_{L_{m'}^2}^2\label{c:4:it}\\
	&-2\left<[\sev,\pab]f^j, \sev \pab f^j\right>_{m'} \label{c:5:it}\\
	&-\left<[\pab,\sev]f^j,[\sev,\pab]f^j\right>_{m'}\label{c:5:it:extra}\\
	&+\frac{1}{2}\iint_{\mathbb T^d \times \mathbb R^d} |\pab f^j|^2 (\sev)^2(\brv^{m'})\dv\dx. \label{c:6:it}
\end{align}
Thus, we observe that \eqref{c:4:it} cancels the It\^o correction \eqref{c:3:it}, \eqref{c:6:it} is bounded above by $C\|\pab f^j\|_{L_{m'}^2}^2$, and \eqref{c:5:it:extra} only contains derivatives of order lower than $|\alpha|+|\beta|$ and is thus bounded above by $C\|f^j\|_{H_{m'}^{\sigma'}}^2$.
Next, we turn to \eqref{c:5:it}.
This term contains (A) terms from the commutator where the total number of derivatives on $f^j$ is strictly less than $|\alpha|+|\beta|$, which can be treated by integration by parts of the $\sev$ and are thus bounded above by $C\|f^j\|_{H_{m'}^{\sigma'}}^2$; and (B) a highest order term which we deal with as follows: 
\begin{align}
	&2\sum_{\substack{\alpha' < \alpha \\ |\alpha'|=1}}\sigma_k^2\left<\partial_x^{\alpha'}e_k\cdot\grad_v \partial_x^{\alpha-\alpha_1}\partial_v^\beta f^j,e_k\cdot\grad_v\pab f^j\right>_{m'}\notag\\
	=&-2\sum_{\substack{\alpha' < \alpha \\ |\alpha'| =1}} \sigma_k^2\left<(\partial_x^{\alpha'}e_k\cdot\grad_v)(e_k\cdot\grad_v)\partial_x^{\alpha-\alpha'}\partial_v^\beta f^j,\pab f^j\right>_m\notag\\
	&-\sum_{\substack{\alpha'< \alpha\\ |\alpha| = 1}} \sigma_k^2\iint_{\mathbb T^d \times \mathbb R^d} \pab f^j \partial_x^{\alpha'}e_k\cdot\grad_v\partial_x^{\alpha-\alpha'}\partial_v^\beta f^j  e_k\cdot\grad_v(\brv^{m'})\dv\dx\notag\\
	=&-\sum_{\substack{\alpha' < \alpha\\ |\alpha'|=1}} \sigma_k^2\left<\partial_x^{\alpha'}(e_k\otimes e_k):\grad_v^2\partial_x^{\alpha-\alpha'}\partial_v^\beta f^j, \pab f^j\right>_{m'}\label{c:7:it}\\
	&-\sum_{\substack{\alpha'<  \alpha\\ |\alpha| = 1}} \sigma_k^2\iint_{\mathbb T^d \times \mathbb R^d} \pab f^j \partial_x^{\alpha'}e_k\cdot\grad_v\partial_x^{\alpha-\alpha'}\partial_v^\beta f^j  e_k\cdot\grad_v(\brv^{m'})\dv\dx \label{c:8:it}.
\end{align}
Notice that to obtain the prefactor $1$ in \eqref{c:7:it} we used the symmetry of the tensor $e_k\otimes e_k$.
Now, \eqref{c:8:it} can be directly bounded by $C\|f^j\|_{H_{m'}^{\sigma'}}^2$, while \eqref{c:7:it} cancels out the highest order term in \eqref{c:1:it}.
Therefore, we finally conclude using \eqref{c:1:it}--\eqref{c:8:it} that we have 
\begin{equation}
	\cC_{\alpha,\beta}(f^j) \leq C\|f^j\|_{H_{m'}^{\sigma'}}^2\dt .\label{c:it}
\end{equation}
Next we treat the contribution of the electric field term. It follows from \eqref{prod:rule:1:1}-\eqref{prod:rule:1:4} in the proof of \eqref{prod:rule:2} that:
\begin{align}
	\cN_{\alpha,\beta}(f^j) \leq&  C\theta_R(\|f^{j-1}\|_{H_{m_0}^{s_0}}) (\|\varphi_\epsilon*E^{j-1}\|_{W^{1,\infty}}+\|\varphi_\epsilon*E^{j-1}\|_{H_x^{\sigma'-1}})\|f^j\|_{H_{m'}^{\sigma'}}^2\dt\notag\\
	\lesssim_{\epsilon,R}& \|f^j\|_{H_{m'}^{\sigma'}}^2\dt. \label{e:it}
\end{align}
Finally, the martingale contribution is given by
	\begin{align}
	\cM_{\alpha,\beta}(f^j) =& \iint_{\mathbb T^d \times \mathbb R^d} |\pab f^j|^2 \dee W_t \cdot\grad_v(\brv^{m'}) \dv \dx - 2\left<[\pab,\dee W_t]\grad_vf^j,\pab f^j\right>_{m'}. \label{m:it}
\end{align}
We sum \eqref{drift:it}-\eqref{m:it} over $|\alpha|+|\beta|\leq \sigma'$ and obtain 
\begin{align}
	\dee \|f^j\|_{H_{m'}^{\sigma'}}^2 \leq& C\|f^j\|_{H_{m'}^{\sigma'}}^2\dt -2\nu\|\grad_vf^j\|_{H_{m'}^{\sigma'}}^2\dt +\sum_{0\leq |\alpha| + |\beta| \leq \sigma} \cM_{\alpha,\beta}(f^j),
\end{align}
so integrating in time and using the Burkh\"older--Davis--Gundy inequality (see e.g. \cite{da1996ergodicity}) (hereinafter abbreviated as BDG) we obtain:
\begin{align}
	\E\sup_{t\leq T}\|f^j(t)\|_{H_{m'}^{\sigma'}}^2 \leq& \E\|f_0\|_{H_{m'}^{\sigma'}}^2 + C\int_0^T\E\|f^j(t)\|_{H_{m'}^{\sigma'}}^2\dt \notag\\
	&+ C\E\left(\int_0^T\|f^j(t)\|_{H_{m'}^{\sigma'}}^4\dt\right)^{\frac{1}{2}} \notag\\
	\leq& \|f_0\|_{H_{m'}^{\sigma'}}^2 + C\int_0^T\E\|f^j(t)\|_{H_{m'}^{\sigma'}}^2\dt \notag\\
	&+ \frac{1}{2}\E\sup_{t\leq T}\|f^j(t)\|_{H_{m'}^{\sigma'}}^2,
\end{align}
where the second line followed from H\"older's inequality.
After rearranging and applying Gr\"onwall's inequality we obtain the uniform-in-$j$ estimate:
\begin{equation}
	\E\sup_{t'\leq T}\|f^j(t)\|_{H_{m'}^{\sigma'}}^2 \leq C\E\|f_0\|_{H_{m'}^{\sigma'}}^2,
\end{equation}
where the constant $C$ depends on $\epsilon,R,T,m',\sigma'$, but not $f_0$ or $j$. Thus we have obtained \eqref{unif:bdd:it} for $p=2$.

Now, we use It\^o's formula again, this time for $\|f^j\|_{H_{m'}^{\sigma'}}^p$, with $p>2$: 
\begin{align}
	\dee \|f^j\|_{H_{m'}^{\sigma'}}^p =& \frac{p}{2}\|f^j\|_{H_{m'}^{\sigma'}}^{p-2}\dee\|f\|_{H_{m'}^{\sigma'}}^2 \notag\\
	&+ \frac{p}{2}\frac{p-2}{2} \|f^j\|_{H_{m'}^{\sigma'}}^{p-4} \sum_{k}\sum_{|\alpha|+|\beta|\leq \sigma'} \left|\left<\pab(\sev f^j),\pab f^j\right>_{m'}\right|^2\dt. \notag
\end{align}
The latter term is treated by a straightforward commutator estimate, and together with the above estimates on $\dee \norm{f^j}_{H^{\sigma'}_{m'}}^2$, we obtain 
\begin{align}
	\dee \|f^j\|_{H_{m'}^{\sigma'}}^p \leq& C\|f^j\|_{H_{m'}^{\sigma'}}^p\dt -p\|f^j\|_{H_{m'}^{\sigma'}}^{p-2}\sum_{|\alpha|+|\beta|\leq\sigma'}\left<\pab (\grad_vf^j\cdot\dee W_t),\pab f^j\right>_{m'}.\notag
\end{align}
After integrating in time, using the BDG inequality, and applying H\"older's inequality, we have 
\begin{align}
\E\sup_{t\leq T}\|f^j\|_{H_{m'}^{\sigma'}}^p \leq& \E\|f_0^j\|_{H_{m'}^{\sigma'}}^p + C\int_0^T\E\|f^j\|_{H_{m'}^{\sigma'}}^p\ds\notag\\
&+C\E\left(\int_0^T\|f^j\|_{H_{m'}^{\sigma'}}^{2p}\ds\right)^{\frac{1}{2}}\notag\\
\leq& \E\|f_0\|_{H_{m'}^{\sigma'}}^p + C\int_0^T\E\|f^j\|_{H_{m'}^{\sigma'}}^p\ds + \frac{1}{2}\E\sup_{t'\leq T}\|f^j\|_{H_{m'}^{\sigma'}}^p. 
\end{align}
By rearranging and using Gr\"onwall's lemma, we obtain \eqref{unif:bdd:it}.

We now turn to the proof of \eqref{equic:it}.
We have:
\begin{align*}
	\E\left\|f^j\right\|_{W^{\alpha,p}([0,T];H_{m'-1}^{\sigma'-2})}^p \leq& C \|f_0\|_{H_{m'-1}^{\sigma'-2}}^p\\
	&+C\E\left\|\int_0^t\left(-v\cdot\grad_xf^j + \Delta_v f^j + \Div_v(f^jv)\right)\ds\right\|_{W^{1,p}([0,T];H_{m'-1}^{\sigma'-2})}^p\\
	&+C\E\left\| \int_0^t \theta_R\varphi_\epsilon*E^{j-1}\cdot\grad_vf^j \right\|_{W^{1,p}([0,T];H_{m'-1}^{\sigma'-2})}^p\\
	&+C\E\left\| \frac{1}{2}\int_0^t\sum_k(\sev)^2f^j \ds \right\|_{W^{1,p} ([0,T] ; H_{m'-1}^{\sigma'-2} )}^p\\
	&+C\E\left\| \int_0^t \grad_vf^j \cdot\dee W_t \right\|_{W^{\alpha,p}([0,T];H_{m'-1}^{\sigma'-2})}^p.
\end{align*}
The terms that are regular in time are estimated in a straightforward manner using the available regularity: 
\begin{align*}
	\E\left\|\int_0^t \left(v\cdot\grad_xf^j + \Div_v(f^jv)\right)\ds\right\|_{W^{1,p}([0,T]; H_{m'-1}^{\sigma'-2})}^p \leq C\E\sup_{t'\leq T}\|f^j(t')\|_{H_{m'}^{\sigma'-1}}^p\\
	\E\left\|\int_0^t \Delta_v f^j \ds \right\|_{W^{1,p}([0,T]; H_{m'-1}^{\sigma'-2})}^p \leq C\E\sup_{t'\leq T} \|f^j(t')\|_{H_{m'-1}^{\sigma'}}^p\\
	\E\left\|\int_0^t \sum_k (\sev)^2f^j\right\|_{W^{1,p}([0,T];H_{m'-1}^{\sigma'-2})}^p \leq C\E\sup_{t'\leq T}\|f^j(t')\|_{H_{m'-1}^{\sigma'}}^p\\
	\E\left\|\int_0^t \theta_R \varphi_\epsilon*E^{j-1}\cdot\grad_vf^j\ds\right\|_{W^{1,p}([0,T];H_{m'-1}^{\sigma'-2})}^p \leq C_R \E\sup_{t'\leq T}\|f^j(t')\|_{H_{m'-1}^{\sigma'-1}}^p. 
    \end{align*}
The time-regularity is only limited by the stochastic integral, which is estimated by a variant of the BDG inequality adapted to fractional regularity estimates in time (see e.g.  [Lemma 2.1; \cite{flandoli1995martingale}] for a proof), namely
\begin{align*}
\E\left\| \int_0^t \grad_vf^j \cdot \dee W_t \right\|_{W^{\alpha,p}([0,T];H_{m'-1}^{\sigma'-2})}^p & \leq  C\E\int_0^T \norm{\grad_vf^j(s)}_{H^{\sigma'-2}_{m'-1}}^p ds \\
 & \leq C \E\sup_{t'\leq T}\|f^j(t')\|_{H_{m'-1}^{\sigma'-1}}^p. 
\end{align*}
Therefore, using that $W^{1,p}([0,T];H_{m'-2}^{\sigma'-2}) \subset W^{\alpha,p}([0,T];H_{m'-2}^{\sigma'-2})$ continuously and \eqref{unif:bdd:it}, we obtain:
\begin{equation}
	\E\|f^j\|_{W^{\alpha,p}([0,T];H_{m'-2}^{\sigma'-2})}^p \leq C_{R,T}\E\|f_0\|_{H_{m'}^{\sigma'}}^p
\end{equation}
uniformly in $j$, which implies \eqref{equic:it}, completing the proof of Lemma \ref{comp:it}.  
\end{proof}

\begin{remark}
By examining the proof above, one can see that one can also treat magnetic fields, due to the special structure of the Lorentz force $v \times B(x)$, which ensures both $\grad_v \cdot (v \times B) = 0$ and, despite the power of $v$, the estimates do not lose any moments in $v$ as $v \times B$ is orthogonal to $v$ (nor does the $v$ dependence create any issues controlling higher regularity).  
\end{remark}

We continue the proof of Lemma \eqref{glbl:reg:reg}. 
The approximation procedure mixes $f^j$ and $f^{j+1}$ in a way that makes it difficult to apply the usual method of using tightness of the laws in pathspace and applying the Skorohod embedding theorem to construct probabilistically weak solutions which are subsequently upgraded to strong solutions (see e.g. \cites{debussche2011local,debussche2012global,GV14,brzezniak2020well}).
Instead we will prove that $\set{f^j}_{j=1}^\infty$ is Cauchy in a suitable topology.
For this we first need the following consequence of Lemma \ref{comp:it} and the Borel-Cantelli lemma. 
\begin{lemma} \label{lem:BC} 
For all $\delta > 0$, $\exists$ a $\mathcal{F}_1$-measurable, almost-surely finite, random constant $C_0$ such that for \emph{all} $j \geq 0$ there holds 
\begin{align*}
\sup_{s < 1} \norm{f^j(s)}_{H^{\sigma'}_{m'}} < C_0(\omega,\delta) \brak{j}^{\delta}.  
\end{align*}
Moreover, $\forall \alpha,n$ there holds, 
\begin{align*}
\PP(C_0 > n) \lesssim_{\delta,\alpha} n^{-\alpha}. 
\end{align*}
\end{lemma}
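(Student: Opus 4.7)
\medskip

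\noindent\textbf{Proof plan for Lemma \ref{lem:BC}.}
The plan is to combine the uniform-in-$j$ moment estimate \eqref{unif:bdd:it} from Lemma \ref{comp:it} with the Borel--Cantelli lemma in quantitative form. Set
\[
  Z_j := \sup_{0 \leq s \leq 1} \|f^j(s)\|_{H^{\sigma'}_{m'}},
\]
and observe that each $Z_j$ is $\mathcal F_1$-measurable since $f^j$ is adapted and continuous in time with values in $H^{\sigma'-3}_{m'-2}$ (and takes values in $H^{\sigma'}_{m'}$ for a.e. time and can be seen to be weakly continuous into $H^{\sigma'}_{m'}$). Lemma \ref{comp:it} with $T=1$ yields, for every $p \geq 2$, a constant $M_p$ independent of $j$ such that $\mathbb E Z_j^p \leq M_p$.

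Fix $\delta > 0$. First I would apply Markov's inequality to obtain, for any $n \in \mathbb N$ and any $p \geq 2$,
\[
  \mathbb P\bigl( Z_j > \langle j \rangle^{\delta} n \bigr) \leq \frac{\mathbb E Z_j^p}{\langle j\rangle^{p\delta} n^p} \leq \frac{M_p}{\langle j\rangle^{p\delta} n^p}.
\]
Choosing any $p > 1/\delta$ (hence summable $\sum_{j \geq 0} \langle j\rangle^{-p\delta} < \infty$), a union bound gives
\[
  \mathbb P\Bigl( \bigcup_{j \geq 0} \{ Z_j > \langle j\rangle^{\delta} n \} \Bigr) \leq \frac{M_p \sum_{j\geq 0} \langle j\rangle^{-p\delta}}{n^p} =: \frac{C_{p,\delta}}{n^p}.
\]
Then define
\[
  C_0(\omega,\delta) := \inf \bigl\{ n \in \mathbb N \,:\, Z_j(\omega) \leq \langle j\rangle^{\delta} n \text{ for every } j \geq 0 \bigr\},
\]
which is $\mathcal F_1$-measurable as a countable infimum of $\mathcal F_1$-measurable conditions and satisfies $Z_j < C_0 \langle j\rangle^{\delta}$ by construction (with, at worst, the non-strict inequality replaced by a strict one after redefining $C_0 \mapsto C_0 + 1$, which does not affect the tail).

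For the tail bound, the inclusion $\{ C_0 > n \} \subseteq \bigcup_{j \geq 0} \{ Z_j > \langle j\rangle^\delta n \}$ gives $\mathbb P(C_0 > n) \leq C_{p,\delta} n^{-p}$. Given $\alpha > 0$, taking $p \geq \max(\alpha, 1/\delta + 1)$ and absorbing $p$-dependent constants into the implicit constant yields $\mathbb P(C_0 > n) \lesssim_{\delta,\alpha} n^{-\alpha}$, which in particular implies $C_0 < \infty$ almost surely. The main (modest) subtlety is simply that the tail bound is required to hold for every $\alpha$; this is why access to arbitrarily high moments in \eqref{unif:bdd:it} is essential — one cannot get a polynomial tail of arbitrary order from a fixed moment bound — but Lemma \ref{comp:it} supplies exactly this, making the argument go through.
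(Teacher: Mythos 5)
Your proposal is correct and follows essentially the same route as the paper: the uniform-in-$j$ moment bounds \eqref{unif:bdd:it} with $T=1$, Chebyshev/Markov with $p>1/\delta$ so the sum over $j$ converges, a definition of $C_0$ as an infimum, and the same summation argument for the tail $\PP(C_0>n)\lesssim n^{-p}$ with $p$ chosen large depending on $\alpha$. The only cosmetic difference is that the paper first invokes Borel--Cantelli to get almost-sure finiteness and then runs the union-bound tail estimate, whereas you obtain both at once from the union bound; this is a harmless streamlining of the identical argument.
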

\begin{proof}
Recall the uniform in $j$ bound \eqref{unif:bdd:it} for the iterates for $T=1$:
\begin{equation*}
	\sup_{j\geq 1}\E\sup_{s\leq 1}\|f^j(s)\|_{H_{m'}^{\sigma'}}^p \leq C_{p,R,\epsilon,M} <\infty.
\end{equation*}
This estimate implies:
\begin{equation*}
	\EE\sum_{j=0}^\infty\frac{\sup_{s\leq 1}\|f^j(s)\|_{H_{m'}^{\sigma'}}^p}{\brak{j}^{\delta p}} \leq C_{p,R,\epsilon, M},
\end{equation*}
for $p>\frac{1}{\delta}$. Denote by $A_j$ the sets:
\begin{equation*}
	A_j := \{\omega \in \Omega: \, \sup_{s\leq 1}\|f^j(s)\|_{H_{m'}^{\sigma'}}>\brak{j}^\delta\},
\end{equation*}
and note that by Chebyshev's inequality:
\begin{equation}
	\sum_{j=0}^\infty \P(A_j) \leq \sum_{j=0}^\infty \frac{\E\sup_{s\leq 1} \|f^j(s)\|_{H_{m'}^{\sigma'}}^p}{\brak{j}^{\delta p}} < \infty. 
\end{equation}
It then follows by the Borel--Cantelli lemma that
\begin{equation*}
	\P(\limsup_{j\to \infty}A_j) = 0,
\end{equation*}
implying that $\P$-a.s., $\sup_{s\leq 1}\|f^j(s)\|_{H_{m'}^{\sigma'}}>\brak{j}^\delta$ at most for a finite number of $j$'s. Denote the largest such $j$ by $j_0(\omega)$. We then see that there is a random constant $C_0(\omega,\delta)$ such that \begin{equation*}\sup_{j\geq 0}\left(\brak{j}^{-\delta}\sup_{s\leq 1}\|f^j(s)\|_{H_{m'}^{\sigma'}}\right) < C_0(\omega,\delta)
\end{equation*}
$\P$--almost surely. In particular, we can take:
\begin{equation*}
	C_0(\omega,\delta) := \inf\left\{n \in \mathbb{N}: \, \sup_{j\leq j_0(\omega)} \left(\brak{j}^{-\delta}\sup_{s\leq 1}\|f^j(s)\|_{H_{m'}^{\sigma'}}\right) < n \right\}.
\end{equation*}
To bound the probability that $C_0$ is large, we observe:
\begin{align*}
	\P(C_0 > n ) \leq& \P\left(\sup_{j\geq 0} \left(\brak{j}^{-\delta}\sup_{s\leq 1}\|f^j(s)\|_{H_{m'}^{\sigma'}}>n\right)\right)\\
	\leq& \sum_{j=0}^\infty \brak{j}^{-\delta p}\E\sup_{s\leq1}\|f^j(s)\|_{H_{m'}^{\sigma'}}^pn^{-p}\\
	\lesssim& n^{-p}.
\end{align*}
This completes the proof of the lemma.
\end{proof}
The next lemma is the crucial convergence estimate. 
\begin{lemma}\label{iter:Cauchy}
There exists an increasing sequence of stopping times $\tau_n$ such that $\set{f^j}_{j=1}^\infty$ is Cauchy in $L^2_\omega C([0,\tau_n];H^{s_0}_{m_0})$
and the stopping time 
\begin{align*}
\lim_{n \to \infty} \tau_n = \xi, 
\end{align*}
is almost-surely greater than $1$. 
\end{lemma}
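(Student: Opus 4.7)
The plan is to run a Picard-type estimate at the \emph{low} regularity $H^{s_0}_{m_0}$ on the differences $g^j := f^{j+1}-f^j$ and show that $\EE\sup_{s\leq t\wedge\tau_n}\|g^j(s)\|_{H^{s_0}_{m_0}}^2$ satisfies a recursive bound in $\EE\sup\|g^{j-1}\|_{H^{s_0}_{m_0}}^2$ with prefactors small enough to sum. First I would subtract the equations for $f^{j+1}$ and $f^j$ in \eqref{iteration}: $g^j$ solves a linear stochastic transport-diffusion equation with self-transport term $-\theta_R(\|f^j\|_{H^{s_0}_{m_0}})(\varphi_\eps\ast E^j)\cdot\grad_v g^j$ plus forcing
\begin{equation*}
\mathcal E^j := \bigl[\theta_R(\|f^{j-1}\|_{H^{s_0}_{m_0}})(\varphi_\eps\ast E^{j-1}) - \theta_R(\|f^j\|_{H^{s_0}_{m_0}})(\varphi_\eps\ast E^j)\bigr]\cdot\grad_v f^j,
\end{equation*}
and the key observation is that, by Lipschitzness of $\theta_R$ and the linear dependence $E^{j-1}-E^j = -\grad_x(-\Delta_x)^{-1}\!\int g^{j-1}\dv$, the forcing $\mathcal E^j$ is linear in $g^{j-1}$ with coefficients involving $f^j$, $f^{j-1}$ and $E^{j-1}$.

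Next I would apply It\^o's formula to $\|g^j\|_{H^{s_0}_{m_0}}^2$. All the drift, dissipative, friction and stochastic contributions coming from $g^j$ itself are treated as in Lemma \ref{comp:it}; in particular the symmetric-tensor cancellation of the highest-order It\^o correction used there does not depend on the chosen regularity level and transfers verbatim. After applying the product rules of Lemma \ref{prod:rules} (legal because $s_0 > d/2+1$) both to the self-transport by $\varphi_\eps\ast E^j$ and to the forcing $\mathcal E^j$, followed by Young's inequality, I expect a differential inequality of the schematic form
\begin{equation*}
\dee\|g^j\|_{H^{s_0}_{m_0}}^2 \leq C_{R,\eps}(1+\|f^j\|_{H^{s_0+1}_{m_0}})\|g^j\|_{H^{s_0}_{m_0}}^2\dt + C_{R,\eps}(1+\|f^j\|_{H^{s_0+1}_{m_0}})^2\|g^{j-1}\|_{H^{s_0}_{m_0}}^2\dt + \dee M_t.
\end{equation*}

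To tame the $\|f^j\|_{H^{s_0+1}_{m_0}}$ prefactor uniformly in $j$, I would fix $\delta\in(0,\tfrac{1}{2})$, rerun Lemma \ref{lem:BC} on $[0,T]$ for some $T>1$ (its proof is identical), and define the increasing family of stopping times
\begin{equation*}
\tau_n := T \wedge \inf\Bigl\{t\geq 0 : \sup_{j\geq 0}\brak{j}^{-\delta}\sup_{s\leq t}\|f^j(s)\|_{H^{\sigma'}_{m'}} > n\Bigr\},
\end{equation*}
for which $\{\tau_n=T\}\supset\{C_0(\omega,\delta)\leq n\}$, so $\xi:=\lim_n\tau_n\geq T>1$ almost surely. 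On $[0,\tau_n]$ one has $\|f^j\|_{H^{\sigma'}_{m'}}\leq n\brak{j}^\delta$, so BDG together with Gr\"onwall should yield the recursion
\begin{equation*}
a_j(t) := \EE\sup_{s\leq t\wedge\tau_n}\|g^j(s)\|_{H^{s_0}_{m_0}}^2 \leq C_n\brak{j}^{2\delta}\int_0^t a_{j-1}(s)\dee s,
\end{equation*}
with $C_n$ independent of $j$. Iterating and using $\prod_{i=1}^j\brak{i}^{2\delta}\leq 2^{\delta j}(j!)^{2\delta}$ gives $a_j(T)\leq (C_n'T)^j/(j!)^{1-2\delta}\cdot a_0(T)$, so $\sum_j\sqrt{a_j(T)}<\infty$ precisely because $\delta<\tfrac{1}{2}$, yielding the Cauchy property in $L^2_\omega C([0,\tau_n];H^{s_0}_{m_0})$.

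The hard part will be the interplay between the $j$-dependent growth of the high-regularity norm from Borel--Cantelli and the factorial decay from Picard iteration. The quantitative reason to work at the lower regularity $H^{s_0}_{m_0}$ for the \emph{differences} is that $\|f^j\|_{H^{s_0+1}_{m_0}}$ then enters only as a multiplicative prefactor rather than a norm that must be controlled on the left-hand side, so it can be absorbed by the factorial $(j!)^{1-2\delta}$; if the difference estimate were attempted at the highest available regularity $H^{\sigma'}_{m'}$, the $j$-dependent prefactors from Lemma \ref{lem:BC} would dominate and destroy the iteration.
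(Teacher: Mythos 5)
Your plan follows the same route as the paper: the paper likewise uses the Borel--Cantelli bound of Lemma \ref{lem:BC} to build stopping times on which $\norm{f^j}$ grows at most like $n\brak{j}^{\delta}$, splits the nonlinearity in the equation for the differences $g^j=f^{j+1}-f^j$ into the self-transport term, the $\theta_R$-difference and the $E$-difference, and closes a Picard-type recursion at the low regularity $H^{s_0}_{m_0}$ whose factorial gain beats the polynomial-in-$j$ losses (the paper fixes $\delta<1/6$ and proves by induction that $\EE\sup_{s<t\wedge\tau_n}\norm{f^j-f^{j-1}}_{H^{s_0}_{m_0}}^2\leq (K_0 n^4 t)^j j^{4\delta j}/j!$). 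Your choice of defining $\tau_n$ through the high norm and running Lemma \ref{lem:BC} on $[0,T]$ with $T>1$ is an inessential variant of the paper's $\tau_n$ (defined via $\norm{f^j}_{H^{s_0+1}_{m_0}}\leq n\brak{j}^\delta$), and your final paragraph correctly identifies why the difference estimate must be run below the top regularity.

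The one step that needs repair is your schematic differential inequality, which places the factor $(1+\norm{f^j}_{H^{s_0+1}_{m_0}})$ on the $\norm{g^j}_{H^{s_0}_{m_0}}^2$ term. On $[0,\tau_n]$ this factor is of size $n\brak{j}^\delta$, so Gr\"onwall at step $j$ produces $e^{C_n\brak{j}^{\delta}T}$, and accumulated over the iteration this gives a factor of order $e^{C_nT j^{1+\delta}}$, which overwhelms $(j!)^{1-2\delta}$ since $j^{1+\delta}/(j\log j)\to\infty$; hence the recursion ``$a_j\leq C_n\brak{j}^{2\delta}\int_0^t a_{j-1}$ with $C_n$ independent of $j$'' does not follow from the inequality as written. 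The fix is exactly the asymmetric splitting the paper performs: the transport of $g^j$ carries the cutoff $\theta_R(\norm{f^j}_{H^{s_0}_{m_0}})$, so by \eqref{prod:rule:2} its contribution is bounded by $C_{R,\eps}\norm{g^j}_{H^{s_0}_{m_0}}^2$ with a constant independent of $j$ and $n$; the $j$-dependent factors occur only in the $\theta_R$-difference and $E$-difference terms, which are linear in $g^{j-1}$, and Young's inequality must be applied so that \emph{all} of the $n\brak{j}^{\delta}$ growth lands on $\norm{g^{j-1}}_{H^{s_0}_{m_0}}^2$ (the paper ends up with $n^4 j^{4\delta}\norm{g^{j-1}}_{H^{s_0}_{m_0}}^2$), keeping the coefficient of $\norm{g^j}_{H^{s_0}_{m_0}}^2$ constant so that the Gr\"onwall factor is $j$-independent. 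With that modification your recursion, the Stirling estimate, and the summation close as you describe (the extra powers of $\brak{j}^{\delta}$ merely force a smaller admissible $\delta$, as in the paper's $\delta<1/6$).
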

\begin{proof}
Define the increasing sequence of stopping times
\begin{align*}
\tau_n = \inf \set{ t: \exists j: \norm{f^j(t)}_{H^{s_0+1}_{m_0}} > n \brak{j}^\delta}. 
\end{align*}
Note that by Lemma \ref{lem:BC} there holds 
\begin{align*}
\PP(\tau_n \geq 1) & = \PP\left( \sup_{j \geq 0} \brak{j}^{-\delta} \sup_{t < 1} \norm{f^j(t)}_{H^{s_0+1}_{m_0}} < n \right) \\
& \geq \PP(C_0 < n)\\
& \geq 1 - \PP(C_0 > n) \\
& \geq 1 - \mathcal{O}(n^{-\alpha}). 
\end{align*}
Therefore, $\lim_{n \to \infty} \PP(\tau_n > 1) =1$ and so if we define
\begin{align*}
\xi = \lim_{n \to \infty} \tau_n, 
\end{align*}
then $\xi$ is almost-surely greater than or equal to $1$.  

Let $\delta \in (0,1/6)$ be fixed arbitrary. 
We will show by induction that $\exists K_0 > 0$ (deterministic constant depending on $\delta$) such that for all $j \geq 1$, there holds 
\begin{align}
\EE \sup_{s<t \wedge \tau_n} \norm{f^{j} - f^{j-1}}_{H^{s_0}_{m_0}}^2 & \leq \frac{(K_0 n^4 t)^j j^{4\delta j}}{j!}. \label{ineq:PowerSer}
\end{align}
First consider the case $j=1$.
The calculation of $\dee \norm{\pab (f^1 - f^0)}^2_{L^2_{m_0}}$ is the same in Lemma \ref{comp:it} except for the nonlinear terms. 
That is, for $|\alpha|+|\beta|\leq s_0$ we have for some constant $C > 0$
\begin{align*}
\dee\|\pab (f^1 - f^0)\|_{L_{m_0}^2}^2 & \leq C\norm{f^1 - f^0}_{H^{s_0}_{m_0}}^2 \dt \\  
    & \quad -2\left<\theta_R(\norm{f^0}_{H^{s_0}_{m_0}})\pab(\varphi_\eps \ast E^{0}\cdot\grad_vf^1),\pab (f^1-f^0)\right>_{m_0}\dt \notag\\
	& \quad -2\left<\pab(\grad_v(f^1-f^0)\cdot\dee W_t),\pab (f^1-f^0)\right>_{m_0}. 
\end{align*}
For the nonlinear term we note that by \eqref{prod:rule:2} we have, recalling the definition of $\tau_n$
\begin{align*}
\abs{\left<\theta_R(\norm{f^0}_{H^{s_0}_{m_0}})\pab(\varphi_\eps \ast E^{0}\cdot\grad_vf^1),\pab (f^1-f^0)\right>_{m_0}} & \lesssim_{\eps,R} n\norm{f^1 - f^0}_{H^{s_0}_{m_0}}.
\end{align*}
Integrating in time and using the BDG inequality as above, we obtain (note that $f^1$ and $f^0$ have the same initial data), 
\begin{align*}
\E\sup_{s < t \wedge \tau_n}\|f^1(s) - f^0(s)\|_{H_{m_0}^{s_0}}^2 \leq& C\int_0^{t \wedge \tau_n}\E\|f^1(s) - f^0(s)\|_{H_{m_0}^{s_0}}^2\ds \notag\\
 & \quad + C n^2 t + C\E\left(\int_0^{t \wedge \tau_n}\|f^1(s) - f^0(s)\|_{H_{m_0}^{s_0}}^4\ds \right)^{\frac{1}{2}} \notag\\
	\leq& C n^2 t + C\int_0^{t \wedge \tau_n} \E\|f^1(s) - f^0(s)\|_{H_{m_0}^{s_0}}^2\ds \notag\\
	&+ \frac{1}{2}\E\sup_{s\leq t \wedge \tau_n}\|f^1(s) -f^0(s)\|_{H_{m_0}^{s_0}}^2,
\end{align*}
Therefore, Gr\"onwall's inequality verifies \eqref{ineq:PowerSer} for $j =1$ and some large $K_0$.

Next consider the inductive step.
Hence, suppose that \eqref{ineq:PowerSer} holds for $j$ and we wish to verify that it holds for $j+1$. 
As above, for some constant $C > 0$
\begin{align*}
\dee\|\pab (f^{j+1} - f^j)\|_{L_{m_0}^2}^2 & \leq C\norm{f^{j+1} - f^j}_{H^{s_0}_{m_0}}^2 \dt \\  
& \quad -2\left<\theta_R(\norm{f^j}_{H^{s_0}_{m_0}})\pab(\varphi_\eps \ast E^{j}\cdot\grad_vf^{j+1}),\pab(f^{j+1}-f^j)\right>_{m_0}\dt \\
&\quad+ 2\left<\theta_R(\norm{f^{j-1}}_{H^{s_0}_{m_0}})\pab(\varphi_\eps \ast E^{j-1}\cdot\grad_vf^j),\pab (f^{j+1}- f^j) \right>_{m_0}\dt \notag\\
	& \quad -2\left<\pab(\grad_v(f^{j+1}-f^j)\cdot\dee W_t),\pab (f^{j+1}-f^j)\right>_{m_0}. 
\end{align*}
The nonlinearity separates into several natural terms, namely
\begin{align*}
(\ast) & = -2\left<\theta_R(\norm{f^j}_{H^{s_0}_{m_0}})\pab(\varphi_\eps \ast E^{j}\cdot(\grad_vf^{j+1} - \grad_v f^j)),\pab (f^{j+1}- f^j) \right>_{m_0}\dt \notag\\
& \quad  -2\left<\left(\theta_R(\norm{f^j}_{H^{s_0}_{m_0}}) - \theta_R(\norm{f^{j-1}}_{H^{s_0}_{m_0}}) \right) \pab(\varphi_\eps \ast E^{j}\cdot \grad_vf^{j}), \pab (f^{j+1}- f^j) \right>_{m_0}\dt \notag \\
& \quad -2\left<\theta_R(\norm{f^{j-1}}_{H^{s_0}_{m_0}})\pab( (\varphi_\eps \ast E^{j} -  \varphi_\eps \ast E^{j-1}) \cdot \grad_v f^j),\pab (f^{j+1}- f^j) \right>_{m_0}\dt \notag\\
& = \mathcal{N}_1  + \mathcal{N}_2  + \mathcal{N}_3. 
\end{align*}
The term $\mathcal{N}_1$ is treated via \eqref{prod:rule:2} in the same manner as in Lemma \ref{comp:it}, giving
\begin{align*}
\mathcal{N}_1 \lesssim_{R,\eps} \norm{f^{j+1} - f^j}_{H^{s_0}_{m_0}}^2. 
\end{align*}
The terms $\mathcal{N}_2,\mathcal{N}_3$ however are different.
The term $\mathcal{N}_3$ is estimated via the following for $t < \tau_n$:
\begin{align*}
\mathcal{N}_3 & \lesssim_R \norm{f^j}_{H^{s_0+1}_{m_0}}\norm{f^j - f^{j-1}}_{H^{s_0}_{m_0}} \norm{f^{j+1} - f^j}_{H^{s_0}_{m_0}}\\
& \lesssim  n j^{\delta} \norm{f^j - f^{j-1}}_{H^{s_0}_{m_0}} \norm{f^{j+1} - f^j}_{H^{s_0}_{m_0}}.
\end{align*}
The term $\mathcal{N}_2$ requires a control on the difference $\theta_R(\norm{f^j}_{H_{m_0}^{s_0}})-\theta_R(\norm{f^{j-1}}_{H_{m_0}^{s_0}})$:
\begin{align*}
\theta_R(\norm{f^j}_{H^{s_0}_{m_0}}) &- \theta_R(\norm{f^{j-1}}_{H^{s_0}_{m_0}}) \\
&= \int_0^1 \theta_R'( z \norm{f^j}_{H^{s_0}_{m_0}} + (1-z)\norm{f^{j-1}}_{H^{s_0}_{m_0}}) (\norm{f^j}_{H^{s_0}_{m_0}} - \norm{f^{j-1}}_{H^{s_0}_{m_0}}) \dee z. 
\end{align*}
Therefore, for $t < \tau_n$
\begin{align*}
\mathcal{N}_2 & \lesssim \abs{\norm{f^j}_{H^{s_0}_{m_0}} - \norm{f^{j-1}}_{H^{s_0}_{m_0}}} \norm{f^j}_{L^2_{m_0}} \norm{f^j}_{H^{s_0+1}_{m_0}} \norm{f^{j+1} - f^j}_{H^{s_0}_{m_0}} \\
& \lesssim \norm{f^j- f^{j-1}}_{H^{s_0}_{m_0}} \norm{f^j}_{L^2_{m_0}} \norm{f^j}_{H^{s_0+1}_{m_0}} \norm{f^{j+1} - f^j}_{H^{s_0}_{m_0}} \\
& \lesssim n^2 j^{2\delta} \norm{f^j- f^{j-1}}_{H^{s_0}_{m_0}}\norm{f^{j+1} - f^j}_{H^{s_0 }_{m_0}}. 
%\mathcal{N}_3 \lesssim \int_0^1 \abs{ \theta_R'( z \norm{f^j}_{H^{s_0}_{m_0}} + (1-z)\norm{f^{j-1}}_{H^{s_0}_{m_0}})} \abs{\norm{f^j}_{H^{s_0}_{m_0}} - \norm{f^{j-1}}_{H^{s_0}_{m_0}}} \norm{f^j}_{L^2_{m_0}} \norm{f^j}_{H^{s_0}_{m_0}} \norm{f^{j+1} - f^j}_{H^{s_0 - 1}_{m_0}} \dee z. 
\end{align*}
Integrating in time and using the BDG inequality as above, we obtain (noting that $f^{j+1}$ and $f^j$ have the same initial data) for $t < \tau_n$:
\begin{align*}
\E\sup_{s < t }\|f^{j+1}(s) - f^j(s)\|_{H_{m_0}^{s_0}}^2 \leq
& C\int_0^{t}\E\|f^{j+1}(s) - f^j(s)\|_{H_{m_0}^{s_0}}^2\ds \notag\\
& \quad + C_0 n^4 j^{4\delta} \EE \int_0^t \norm{f^j(s) - f^{j-1}(s)}_{H^{s_0}_{m_0}}^2 \dee s \\ 
& \quad + \frac{1}{2}\E\sup_{s\leq t}\|f^{j+1}(s) -f^{j}(s)\|_{H_{m_0}^{s_0}}^2 \\
\end{align*} 
By the inductive hypothesis
\begin{align*}
C n^4 j^{4\delta} \EE \int_0^t \norm{f^j(s) - f^{j-1}(s)}_{H^{s_0}_{m_0}}^2 \dee s
& \leq C_0 n^4 j^{4\delta} \int_0^t \frac{(C_0 n^4 s)^j j^{4\delta j}}{j!} \dee s \\
& \leq \frac{(C_0 n^4 t)^{j+1} j^{4\delta (j+1)}}{(j+1)!}, 
\end{align*}
and so we have verfied \eqref{ineq:PowerSer}. 

Finally, we show that \eqref{ineq:PowerSer} implies that $\set{f^j}$ is Cauchy in $L_\omega^2L_t^2([0,\tau_n];H_{m_0}^{s_0})$.
Indeed, let $k < \ell$ and 
\begin{align}
\EE \sup_{s<t \wedge \tau_n} \norm{f^{\ell} - f^{k}}_{H^{s_0}_{m_0}}^2 & \leq \sum_{j = k}^\ell \frac{(K_0 n^4 t)^j j^{4\delta j}}{j!}. 
\end{align}
Hence, if we choose $k > (2C_0 n^4 t)^{1/\delta}$, then
\begin{align*}
\EE \sup_{s<t \wedge \tau_n} \norm{f^{\ell} - f^{k}}_{H^{s_0}_{m_0}}^2 & \leq \sum_{j = k}^\ell \frac{1}{2^{-\delta j}} \frac{k^{j\delta} j^{4\delta j}}{j!}  \leq \sum_{j = k}^\ell \frac{1}{2^{-\delta j}} \frac{j^{5\delta j}}{j!}. 
\end{align*}
By Stirling's formula we have the following uniformly in $j$ (using $5\delta < 1$), 
\begin{align*}
\frac{j^{5\delta j}}{j!} \lesssim_\delta 1,
\end{align*}
therefore
\begin{align*}
\EE \sup_{s<t \wedge \tau_n} \norm{f^{\ell} - f^{k}}_{H^{s_0}_{m_0}}^2 & \lesssim \sum_{j = k}^\ell \frac{1}{2^{-\delta j}} \lesssim \frac{1}{2^{-\delta k}}.  
\end{align*}
We conclude that the sequence is Cauchy as claimed in the lemma. 
\end{proof} 

\begin{lemma}\label{iterates:converge}
For each $n,$ the iterates $\set{f^j}_{j=1}^\infty$ converge uniformly in $H_{m_0}^{s_0}$ on compact subintervals of $[0,\tau_n]$ to a strong pathwise solution of the SPDE \eqref{SVPFP:reg:reg} on the set $\set{\tau_n > 0} \subset \Omega$. 
\end{lemma}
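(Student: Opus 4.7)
The plan is straightforward given the Cauchy estimate of Lemma \ref{iter:Cauchy}: first extract a candidate limit $f$, then upgrade the mode of convergence using the uniform bounds from Lemma \ref{comp:it}, and finally pass to the limit in the integral form of the iteration \eqref{iteration} to verify that $f$ solves \eqref{SVPFP:reg:reg}.

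First, Lemma \ref{iter:Cauchy} shows that $\{f^j\}$ is Cauchy in $L^2(\Omega; C([0,\tau_n]; H^{s_0}_{m_0}))$, so it admits a limit $f$ in that space, and along a subsequence (not relabeled) the convergence holds almost surely uniformly in $t\in[0,\tau_n]$. The bound \eqref{unif:bdd:it} yields $\sup_j \EE \sup_{t\leq \tau_n}\|f^j\|_{H^{\sigma'}_{m'}}^p<\infty$ for all $p\geq 2$, so by weak-$*$ compactness and lower semicontinuity $f\in L^\infty([0,\tau_n]; H^{\sigma'}_{m'})$ almost surely. The standard interpolation inequality
\[
\|f^j - f\|_{H^s_m} \lesssim \|f^j - f\|_{H^{s_0}_{m_0}}^{1-\theta}\|f^j - f\|_{H^{\sigma'}_{m'}}^{\theta}
\]
then promotes the convergence to $C([0,\tau_n]; H^s_m)$ for every $s_0\leq s<\sigma'$ and $m_0\leq m<m'$. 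Combined with the time-equicontinuity \eqref{equic:it} and an Aubin--Lions-type argument, one recovers continuity of $f$ in $H^{\sigma'-3}_{m'-2}$, matching the regularity advertised in Lemma \ref{glbl:reg:reg}.

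Next, one passes to the limit $j\to\infty$ in the integral version of \eqref{iteration}, written in It\^o form using the Stratonovich--It\^o correction. The linear transport, Fokker--Planck dissipation, and It\^o correction $\tfrac{1}{2}\sum_k(\sev)^2 f^{j+1}$ all converge in a lower-regularity norm (say $H^{s_0-1}_{m_0}$), where the interpolated strong convergence easily absorbs up to two $v$-derivatives; the gap $\sigma'>s_0+4$ provides more than enough room. The nonlinear term $\theta_R(\|f^j\|_{H^{s_0}_{m_0}})(\varphi_\epsilon * E[f^j])\cdot\grad_v f^{j+1}$ converges by continuity of $\theta_R$, continuous dependence of $E[\cdot]$ on the density via Lemma \ref{interpolation} and elliptic regularity, and strong convergence of $\grad_v f^{j+1}$. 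Finally, the stochastic integral $\int_0^{t\wedge\tau_n}\grad_v f^{j+1}\cdot\dee W_s$ converges in $L^2(\Omega; C([0,\tau_n]; H^{s_0-1}_{m_0}))$ by the Burkh\"older--Davis--Gundy inequality together with the coloring assumption \eqref{coloring} and strong convergence of $\grad_v f^j$.

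The main technical obstacle is the joint treatment of the stochastic integral and its It\^o correction, both of which are second order in $v$: the limit passage must be performed at a spatial regularity strictly above $s_0$ so that these terms remain well defined, which is exactly what the gap $\sigma'>s_0+4$ in the hypotheses affords. That the subsequential limit is in fact the unique limit (so that the entire sequence converges) then follows from pathwise uniqueness at the $H^{s_0}_{m_0}$ level, which is the content of the subsequent Lemma \ref{uniqueness:reg}.
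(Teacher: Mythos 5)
Your proposal is correct and follows essentially the same route as the paper: take the $L^2_\omega C([0,\tau_n];H^{s_0}_{m_0})$ limit furnished by Lemma \ref{iter:Cauchy} and pass to the limit term by term in the integral form of \eqref{iteration}, treating the linear, It\^o-correction, nonlinear (split according to $\theta_R$, the electric field, and $\grad_v f$), and stochastic-integral contributions in lower-regularity norms such as $H^{s_0-1}_{m_0}$ or $H^{s_0-2}_{m_0}$, with the BDG inequality for the martingale part. Two minor remarks: the limit passage is carried out \emph{below} $s_0$ (the terms with two $v$-derivatives are handled because $s_0\geq 2$, not because of regularity above $s_0$), and the closing appeal to pathwise uniqueness (Lemma \ref{uniqueness:reg}, which in any case concerns \eqref{SVPFP:reg} rather than \eqref{SVPFP:reg:reg}) is unnecessary, since the Cauchy property already identifies the limit of the full sequence.
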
 
\begin{proof}
Consider only $\omega \in \set{\tau_n > 0} \subset \Omega$. 
Let $f$ be the limiting process of the $f^j$ in $L_\omega^2C([0,\tau_n];H_{m_0}^{s_0})$ - whose existence is guaranteed by Lemma \ref{iter:Cauchy}. We will show that each term in \eqref{iteration} converges to the corresponding term in \eqref{SVPFP:reg:reg}.
The convergence of the linear terms is straightforward:
\begin{align}
	\E \sup_{t\leq T\wedge \tau_n}\left\|\int_0^tv\cdot\grad_x(f^{j+1}-f)\ds\right\|_{H_{m_0-1}^{s_0-1}}^2 & \lesssim_{m_0} T^2 \E\sup_{t\leq T\wedge\tau_n}\|f^{j+1}-f\|_{H_{m_0}^{s_0}}^2 \to 0,\\
	\E\sup_{t\leq T\wedge \tau_n}\left\|\int_0^t \Delta_v(f^{j+1}-f)\ds\right\|_{H_{m_0}^{s_0-2}}^2 & \lesssim T^2 \E\sup_{t\leq T\wedge\tau_n}\|f^{j+1}-f\|_{H_{m_0}^{s_0}}^2 \to 0,\\
	\E\sup_{t\leq T\wedge\tau_n}\left\|\int_0^t \Div_v(f^{j+1}v-fv)\ds\right\|_{H_{m_0-1}^{s_0-1}}^2 & \lesssim_{m_0} T^2\E\sup_{t\leq T\wedge\tau_n}\|f^{j+1}-f\|_{H_{m_0}^{s_0}}^2 \to 0, \\ 
	\E\sup_{t\leq T\wedge\tau_n}\left\|\int_0^t\sum_k(\sev)^2(f^{j+1}-f)\ds\right\|_{H_{m_0}^{s_0-2}}^2 & \lesssim T^2\E\sup_{t\leq T\wedge\tau_n}\|f^{j+1}-f\|_{H_{m_0}^{s_0}}^2 \to 0.
\end{align}
For the nonlinear electric field terms, we have:
\begin{equation*}
	\E\sup_{t\leq T\wedge\tau_n} \left\|\int_0^t( \theta_{R}^j\varphi_\epsilon*E^j\cdot\grad_vf^j - \theta_{R}\varphi_\epsilon*E\cdot\grad_vf)\ds\right\|_{H_{m_0}^{s_0-1}} \lesssim \mathcal{N}_1+\mathcal{N}_2+\mathcal{N}_3,
\end{equation*}
where:
\begin{gather*}
	\mathcal{N}_1 := \E\sup_{t\leq T\wedge \tau_n}\left\|\int_0^t (\theta_R^j-\theta_R)\varphi_\epsilon*E^j\cdot\grad_vf^j\ds\right\|_{H_{m_0}^{s_0-1}},\\
	\mathcal{N}_2 := \E\sup_{t\leq T\wedge\tau_n}\left\|\int_0^t \theta_R\varphi\varphi_\epsilon*(E^j-E)\cdot\grad_vf^j \ds\right\|_{H_{m_0}^{s_0-1}},\\
	\mathcal{N}_3 := \E\sup_{t\leq T\wedge\tau_n}\left\|\int_0^t\theta_R\varphi_\epsilon*E\cdot\grad_v(f^j-f)\ds\right\|_{H_{m_0}^{s_0-1}}.
\end{gather*}
These terms are estimated as follows:
\begin{align*}
	\cN_1 \lesssim_{R}& \E\sup_{t\leq T\wedge\tau_n}\int_0^t\left|\|f^j\|_{H_{m_0}^{s_0}}-\|f\|_{H_{m_0}^{s_0}}\right|\|f^j\|_{H_{m_0}^{s_0}}^2\ds\\
	\lesssim&\E\sup_{t\leq T\wedge\tau_n}\left(\|f^j-f\|_{L^2([0,t];H_{m_0}^{s_0})}\|f^j\|_{L^4([0,T];H_{m_0}^{s_0})}^2\right)\notag\\
	\to& 0,
\end{align*}
\begin{align*}
		\cN_{2} \lesssim \E\sup_{t\leq T\wedge\tau_n}\left( \left\|f^j-f\right\|_{L^2([0,t];H_{m_0}^{s_0})}\|f^j\|_{L^2([0,t];H_{m_0}^{s_0})}\right) \to 0,
\end{align*}
\begin{align*}
	\cN_{3} \lesssim \E\sup_{t\leq T\wedge\tau_n} \left(\|f\|_{L^2([0,t]; H_{m_0}^{s_0})}\|f^j-f\|_{H_{m_0}^{s_0}}\right) \to 0.
\end{align*}

Lastly, for the martingale terms we use the BDG inequality:
\begin{align}
	\E\sup_{t\leq T\wedge\tau_n}\left\|\int_0^t \sum_k\sev(f^{j+1}-f)\cdot\dee W_s^k\right\|_{H_{m_0}^{s_0-1}}^2 \lesssim& \E\int_0^T\|f^{j+1}-f\|_{H_{m_0}^{s_0}}^2\ds\notag\\
	\lesssim& T^2\E\sup_{t\leq T\wedge\tau_n}\|f^{j+1}-f\|_{H_{m_0}^{s_0}}^2\notag\\
	\to& 0.
\end{align}
Combining the above, we see that $f$ is a solution of \eqref{SVPFP:reg:reg}.
\end{proof}
%Next we explain why Lemma \ref{iterates:converge} implies the existence of a global, strong pathwise solution of the SPDE. 
\begin{corollary}
There exists a global, strong pathwise solution of the SPDE \eqref{SVPFP:reg:reg} such that $\forall p \in [2,\infty$, $f \in  L_{\omega}^{p}C_{t,loc}H_{m'-2}^{\sigma'-3}\cap L_\omega^{p}L_{t,loc}^\infty H_{m'}^{\sigma'}$. 
\end{corollary}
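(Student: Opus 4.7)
The plan is to combine Lemmas \ref{iter:Cauchy} and \ref{iterates:converge} to produce a solution on $[0,1]$, iterate in time to extend to all of $[0,\infty)$, and upgrade the regularity by passing the uniform bounds of Lemma \ref{comp:it} to the $j \to \infty$ limit.

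First, I would patch together the local solutions. Since the stopping times $\tau_n$ of Lemma \ref{iter:Cauchy} are increasing with $\tau_n \uparrow \xi \geq 1$ almost surely, and the pathwise solutions of Lemma \ref{iterates:converge} on each $[0,\tau_n]$ all arise as the common limit of the same Cauchy sequence $\{f^j\}$, they are automatically consistent on overlaps and glue into a strong pathwise solution on $[0,\xi]$, hence on $[0,1]$ almost surely. To promote this to the global regularity statement, I would pass the uniform bound $\sup_j \E \sup_{t \leq T}\|f^j(t)\|^p_{H^{\sigma'}_{m'}} \leq C_{T,R,\eps,M}$ of Lemma \ref{comp:it} through the limit by weak-$*$ compactness in $L^p(\Omega; L^\infty([0,T]; H^{\sigma'}_{m'}))$, viewed as the dual of $L^{p/(p-1)}(\Omega; L^1([0,T]; H^{-\sigma'}_{-m'}))$, extracting a weakly-$*$ convergent subsequence whose limit must be identified with $f$ using the strong convergence $f^j \to f$ in $L^2_\omega C_t H^{s_0}_{m_0}$ provided by Lemma \ref{iter:Cauchy}. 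This yields $f \in L^p(\Omega; L^\infty([0,T]; H^{\sigma'}_{m'}))$ with the same bound; in particular $f(1) \in H^{\sigma'}_{m'}$ is $\mathcal{F}_1$-measurable and has all polynomial moments finite.

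Next, to extend globally in time I would restart the construction at $t = 1$, using $f(1)$ as the new initial condition and the shifted noise $\widetilde{W}^{(k)}_t := W^{(k)}_{t+1} - W^{(k)}_1$ as driving Brownian motions. Since the $\widetilde{W}^{(k)}$ are independent of $\mathcal{F}_1$, Lemmas \ref{iter:Cauchy} and \ref{iterates:converge} apply verbatim in the shifted filtration and produce a solution on an interval of length at least $1$ almost surely, adapted to $(\mathcal{F}_{t+1})_{t\geq 0}$, hence to the original filtration after re-indexing. Iterating this procedure covers $[0,\infty)$ and yields $f \in L^p_\omega L^\infty_{t,loc} H^{\sigma'}_{m'}$. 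For the continuity assertion, I would apply the same weak-$*$ / lower semicontinuity argument to the fractional time regularity bound of Lemma \ref{comp:it}, obtaining $\E \|f\|^p_{W^{\alpha,p}([0,T]; H^{\sigma'-2}_{m'-1})} < \infty$ for all $\alpha \in (0,1/2)$ and $p \geq 2$; taking $\alpha p > 1$ and invoking the vector-valued Sobolev embedding $W^{\alpha,p}_t \hookrightarrow C_t$ gives $f \in L^p_\omega C_{t,loc} H^{\sigma'-2}_{m'-1} \hookrightarrow L^p_\omega C_{t,loc} H^{\sigma'-3}_{m'-2}$.

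The main obstacle is essentially the bookkeeping of the time iteration, namely verifying that each restart of Lemmas \ref{iter:Cauchy}--\ref{iterates:converge} is sound. This reduces to the inductive claim that at every integer time $k$, the random variable $f(k)$ lies in $H^{\sigma'}_{m'}$ with $\E \|f(k)\|^p_{H^{\sigma'}_{m'}}$ finite for all $p \geq 2$. The base case is the hypothesis on $f_0$, and the inductive step is precisely the weak-$*$ compactness / Fatou argument of the previous paragraph applied on $[k-1,k]$. All other ingredients (adaptedness, consistency of the glued solutions, passage to the limit in the integral equation) are routine given the strong $H^{s_0}_{m_0}$ convergence established in Lemma \ref{iterates:converge}.
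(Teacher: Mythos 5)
Your proposal is correct and follows essentially the same route as the paper: pass to the limit of the Lagrangian iterates on $[0,1)$ using the Cauchy property and the stopping times $\tau_n \uparrow \xi \geq 1$, upgrade the regularity of the limit via the uniform bounds of Lemma \ref{comp:it} and lower semicontinuity under weak(-$*$) convergence, and then iterate the construction in time. The only cosmetic difference is the restart point: the paper restarts at $t = 1/2, 3/2, \ldots$ to sidestep the endpoint $t=1$ (where convergence is only guaranteed on compact subintervals of $[0,\xi)$), whereas you restart at integer times, which is also fine since the $W^{\alpha,p}_t$ embedding and the $L^\infty_t H^{\sigma'}_{m'}$ bound make $f(1)$ well-defined with the required moments.
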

\begin{proof}
By sending $n \to \infty$ and using that $\tau_n$ is a non-decreasing sequence such that $\lim_{n \to \infty} \mathbb P( \tau_n >1) = 1$ we see that almost-surely,
$\set{f^j}_{j=1}^\infty$ converges uniformly in  $H_{m_0}^{s_0}$  on compact subintervals of $[0,1)$ to a limiting function $f \in C_t([0,1);H_{m_0}^{s_0})$. 
By Sobolev interpolation, and uniform boundedness in $L^\infty_{t,\loc} H^{\sigma'}_{m'}$, we obtain similar uniform convergence in $H_{m''}^{s''}$ for all $0 \leq s'' < s'$ and $m'' < m'$. 
At the same time, the uniform bounds on $\set{f^j}$ from Lemma \ref{comp:it} imply that $\forall p \in [2,\infty)$, $f \in  L_{\omega}^{p}C_tH_{m'-2}^{\sigma'-3}\cap L_\omega^{p}L_{t,loc}^\infty H_{m'}^{\sigma'}$  by the lower semicontinuity of weak convergence. 
By Lemma \ref{iterates:converge}, the limiting function $f$ is also a solution of \eqref{SVPFP:reg:reg}.
Now, we simply iterate the construction starting at $t = 1/2,3/2,...$ to obtain the existence of a global solution satisfying the desired bounds. 
\end{proof}

The following lemma proves uniqueness of solutions to \eqref{SVPFP:reg:reg}, thus completing the proof of Lemma \ref{glbl:reg:reg}. 

\begin{lemma}\label{uniqueness:reg:reg}
Let $f, f'$ be two global  pathwise solutions to \eqref{SVPFP:reg:reg} on the same stochastic basis with $f(0)=f'(0)=f_0$ for some $\F_0$-measurable $f_0$ with $\E\|f_0\|_{H_{m'}^{\sigma'}}^p < \infty$ for some $p>2$ and such that for all $\eps > 0$, $ f, f' \in L_{\omega}^{p-\eps}C_tH_{m'-2}^{\sigma'-3}\cap L_\omega^{p-}L_{t,loc}^\infty H_{m'}^{\sigma'}.$
Then $f, f'$ are indistinguishable, that is:
	\begin{equation}
		\P\left(f(t)=f'(t) \, \text{ for all } 0 \leq t  \right)=1. \label{r:r:indist}
	\end{equation}
\end{lemma}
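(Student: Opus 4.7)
The plan is to carry out a stopping-time truncation together with an It\^o energy estimate for the difference $g := f-f'$ at the $H^{s_0}_{m_0}$ level, which is the natural scale because it is the norm appearing inside the cutoff $\theta_R$.

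First I would introduce the localizing stopping times
\begin{equation*}
\tau_N := \inf\bigl\{t\geq 0 : \|f(t)\|_{H^{\sigma'}_{m'}} \vee \|f'(t)\|_{H^{\sigma'}_{m'}} > N\bigr\},
\end{equation*}
which, by the hypothesis $f, f' \in L^\infty_{t,\loc} H^{\sigma'}_{m'}$ almost surely, satisfy $\tau_N \uparrow \infty$ $\P$-a.s. It therefore suffices to show $\P(g(s)=0 \text{ for all } 0 \leq s \leq t\wedge\tau_N)=1$ for every fixed $N$ and $t$, and then send $N\to\infty$.

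Next I would write down the It\^o-form SPDE for $g$. The linear pieces ($v\cdot\nabla_x g$, $\nu\cL g$, the It\^o correction $\tfrac{1}{2}\sum_k(\sev)^2 g$ and the martingale $\nabla_v g\cdot\dee W_t$) appear exactly as in the original equation, while the difference of the two nonlinearities decomposes via a telescoping identity as
\begin{align*}
&\theta_R(\|f\|_{H^{s_0}_{m_0}})(\varphi_\eps*E[f])\cdot\nabla_v f - \theta_R(\|f'\|_{H^{s_0}_{m_0}})(\varphi_\eps*E[f'])\cdot\nabla_v f' \\
&\quad = \theta_R(\|f\|_{H^{s_0}_{m_0}})(\varphi_\eps*E[f])\cdot\nabla_v g \\
&\quad\quad + \theta_R(\|f\|_{H^{s_0}_{m_0}})(\varphi_\eps*E[g])\cdot\nabla_v f' \\
&\quad\quad + \bigl[\theta_R(\|f\|_{H^{s_0}_{m_0}}) - \theta_R(\|f'\|_{H^{s_0}_{m_0}})\bigr](\varphi_\eps*E[f'])\cdot\nabla_v f'.
\end{align*}
I would then apply It\^o's formula to $\|g\|^2_{H^{s_0}_{m_0}}$ and reuse verbatim the symmetry-based commutator cancellation carried out in \eqref{c:1:it}--\eqref{c:8:it}: the It\^o correction combined with the quadratic variation of the martingale leaves only error terms bounded by $C\|g\|^2_{H^{s_0}_{m_0}}$. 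On $\{t<\tau_N\}$ the three pieces of the nonlinearity are each controlled by $C_{R,\eps,N}\|g\|^2_{H^{s_0}_{m_0}}$: the first by the transport-type bound \eqref{prod:rule:2}; the second by the fact that $\varphi_\eps*E[g]$ is $\eps$-smoothed and $\|\nabla_v f'\|_{H^{s_0}_{m_0}}\leq C_N$ on the stopped interval; and the third by the mean-value inequality $|\theta_R(\|f\|_{H^{s_0}_{m_0}})-\theta_R(\|f'\|_{H^{s_0}_{m_0}})|\leq C_R\|g\|_{H^{s_0}_{m_0}}$ together with the stopping-time control on $(\varphi_\eps*E[f'])\cdot\nabla_v f'$ in $H^{s_0}_{m_0}$, which requires $f'\in H^{s_0+1}_{m_0}$ and is available because $\sigma'\geq s_0+1$.

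Finally, integrating in $t$, invoking BDG on the stochastic integral to absorb a copy of $\tfrac{1}{2}\E\sup_{s\leq t\wedge\tau_N}\|g(s)\|^2_{H^{s_0}_{m_0}}$ to the left-hand side exactly as in the proof of Lemma \ref{comp:it}, and applying Gr\"onwall yields $\E\sup_{s\leq t\wedge\tau_N}\|g(s)\|^2_{H^{s_0}_{m_0}}=0$ since $g(0)=0$. Hence $g\equiv 0$ on $[0,t\wedge\tau_N]$ $\P$-a.s.; since $t$ and $N$ are arbitrary and $\tau_N\uparrow\infty$ a.s., this gives \eqref{r:r:indist}. The main obstacle is, as throughout the paper, the It\^o/Stratonovich bookkeeping for the stochastic transport, but it is a direct recycling of the commutator manipulation already performed in Lemma \ref{comp:it}; everything else is standard pathwise-uniqueness bookkeeping for a locally Lipschitz SPDE.
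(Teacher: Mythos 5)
Your proposal is correct and follows essentially the same route as the paper: a stopping-time localization by the $H^{\sigma'}_{m'}$ norms, an It\^o energy estimate for the difference with the same three-way splitting of the nonlinearity (handled by Lemma \ref{prod:rules} and the mean-value bound on $\theta_R$), the commutator cancellations of Lemma \ref{comp:it} for the noise terms, and then BDG plus Gr\"onwall. The only cosmetic difference is that you close the estimate at the $H^{s_0}_{m_0}$ level (as the paper itself does in Lemma \ref{uniqueness:reg}) while the paper's proof of this lemma works in $H^{\sigma'-1}_{m'-1}$; both work because the stopped $H^{\sigma'}_{m'}$ bound controls the one extra derivative needed on the undifferenced factor.
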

\begin{proof}
This is proved by an energy estimate on $\|f-f'\|_{H_{m'-1}^{\sigma'-1}}^2.$ Similarly to the proof of Lemma \ref{comp:it}, for $|\alpha| + |\beta| \leq \sigma'-1$ we have:
\begin{align}
\dee\|\pab(f-f')\|_{L_{m'-1}^2}^2 \leq& C\|f-f\|_{H_{m'-1}^{\sigma'-1}}^2\dt\notag\\
&-2\brak{\pab(\theta_R\varphi_\epsilon*E\cdot\grad_vf - \theta_{R}'\varphi_\epsilon*E'\cdot\grad_vf'),\pab(f-f')}_{m'-1}\dt\notag\\
&-2\int_0^t\brak{\pab(\grad_v(f-f')\cdot\dee W_t),\pab(f-f')}_{m'-1}\label{reg:reg:en:est}.
\end{align}
We split the electric field contributions as:
\begin{align*}
	\brak{\pab(\theta_R\varphi_\epsilon*E\cdot\grad_vf-\theta_R'\varphi_\epsilon*E'\cdot\grad_vf'),\pab(f-f')}_{m'-1} = \cN_1 +\cN_2 + \cN_3,
\end{align*}
where:
\begin{gather*}
	\cN_1 := \brak{(\theta_R-\theta_R')\pab(\varphi_\epsilon* E\cdot\grad_vf),\pab(f-f')}_{m'-1}, \\
	\cN_2 := \brak{\theta_R'\pab(\varphi_\epsilon*(E-E')\cdot\grad_vf),\pab (f-f')}_{m'-1}, \\
	\cN_3 := \brak{\theta_R'\pab(\varphi_\epsilon*E'\cdot\grad_v(f-f')),\pab(f-f')}_{m'-1}.
\end{gather*}
These are estimated as follows:
\begin{equation}
	|\cN_1| \lesssim_R \|f\|_{H_{m'}^{\sigma'}}^2\|f-f'\|_{H_{m'-1}^{\sigma'-1}}^2,\label{reg:reg:n1}
\end{equation}
\begin{equation}
	|\cN_2| \lesssim \|f\|_{H_{m'}^{\sigma'}}\|f-f\|_{H_{m'-1}^{\sigma'-1}}^2,\label{reg:reg:n2}
\end{equation}
\begin{equation}
	|\cN_3| \lesssim_R \|f-f'\|_{H_{m'-1}^{\sigma'-1}}^2, \label{reg:reg:n3}
\end{equation}
where in \eqref{reg:reg:n1} we used the mean value theorem for $\theta_R$ and \eqref{prod:rule:1} , in \eqref{reg:reg:n2} we used \eqref{prod:rule:1}, and in \eqref{reg:reg:n3} we used \eqref{prod:rule:2} - in addition to Lemma \ref{interpolation} for each electric field. 

Now, fix $K>0$. Since $f,f' \in L_\omega^{p-}L_{t,loc}^\infty H_{m'}^{\sigma'}$, the stopping time:
\begin{equation*}
	\xi_K = \inf\{t\geq 0: \sup_{s\leq t}\|f(s)\|_{H_{m'}^{\sigma'}} + \sup_{s\leq t}\|f'(s)\|_{H_{m'}^{\sigma'}}>K\}\wedge\tau\wedge\tau'
\end{equation*}
is almost surely finite. Even though it is not clear that $\xi_K$ is almost surely positive in general, for almost every $\omega\in\Omega$ there exists $K>0$ such that $\xi_K>0,$ and in addition $\xi_K\to \tau\wedge\tau'$ $\P$--a.s. as $K\to \infty$. With this in mind, we fix $T>0$ and use \eqref{reg:reg:n1}-\eqref{reg:reg:n3} and the BDG inequality in \eqref{reg:reg:en:est}, to obtain:
\begin{align}
	\E\sup_{s\leq t\wedge\xi_K}\|f(s)-f'(s)\|_{H_{m'-1}^{\sigma'-1}}^2 \lesssim& \E\int_0^t\sup_{s\leq s'\wedge \xi_K}\|f(s)-f'(s)\|_{H_{m'-1}^{\sigma'-1}}^2\ds' \notag\\
	&+\E\left(\int_0^t\sup_{s\leq s'\wedge\xi_K}\|f(s)-f'(s)\|_{H_{m'-1}^{\sigma'-1}}^4\ds'\right)^{\frac{1}{2}}\notag\\
	\leq& C\E\int_0^t\sup_{s\leq s'\wedge\xi_K}\|f(s)-f'(s)\|_{H_{m'-1}^{\sigma'-1}}^2\ds'\notag\\
	&+\frac{1}{2}\E\sup_{s\leq t\wedge\xi_K}\|f(s)\|_{H_{m'-1}^{\sigma'-1}}^2,
\end{align}
for all $t\leq T$, whereby the usual rearrangement and Gr\"onwall's lemma give:
\begin{equation*}
	\E\sup_{s\leq T\wedge\xi_K}\|f(s)-f'(s)\|_{H_{m'-1}^{\sigma'-1}}^2 = 0.
\end{equation*}
Taking $K\to \infty$ and then $T\to \infty$, the conclusion follows.
\end{proof}

\subsection{Proof of Lemma \ref{glbl:reg}}

Next, we want to pass to the limit $\eps \to 0$, for which we need uniform-in-$\eps$ estimates similar to those of
Lemma \ref{comp:it}, but this time for a family $\set{f_\eps}_{\eps >0}$ of solutions to \eqref{SVPFP:reg:reg}. Note that since $f_\epsilon \in L_{t,loc}^\infty H_{m'}^{\sigma'}\cap C_tH_{m'-2}^{\sigma'-3},$ the highest norm in which we know $f_\epsilon$ is continuous is $C_tH_{m'-1}^{\sigma'-1}$ - and thus we use this as the base for our estimates.
\begin{lemma}\label{comp:epsilon}
	Let $f$ be a solution of \eqref{SVPFP:reg:reg}. For $\alpha\in(0,\frac{1}{2}),$ $p\geq 2,$ we have the uniform in $\epsilon$ estimates:
	\begin{equation}
		\E\sup_{t\leq T} \|f(t)\|_{H_{m'-1}^{\sigma'-1}}^p \lesssim_{p,T,R,f_0} 1 \label{unif:lim}
	\end{equation}
and
\begin{equation}
	\E\|f\|_{W^{\alpha,p}([0,T];H_{m'-2}^{\sigma'-3})}^p \lesssim_{p,T,R,f_0} 1.\label{equic:lim}
\end{equation}
\end{lemma}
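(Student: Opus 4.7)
The plan is to mimic the It\^o energy analysis of Lemma \ref{comp:it}, now working at the $H^{\sigma'-1}_{m'-1}$ level, which is the highest norm in which $f_\epsilon$ is known to be continuous in time. Applying It\^o's formula to $\|\pab f_\epsilon\|_{L^2_{m'-1}}^2$ and summing over $|\alpha|+|\beta|\leq\sigma'-1$, the transport, friction, dissipation, It\^o correction, and martingale contributions are controlled by exactly the same commutator identities and integration-by-parts tricks as in Lemma \ref{comp:it}, yielding bounds of the form $C\|f_\epsilon\|_{H^{\sigma'-1}_{m'-1}}^2\dt + \dee M_t$ with constants independent of both $\epsilon$ and $R$. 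The whole $\epsilon$- and $R$-dependence is therefore concentrated in the nonlinear electric-field term $\cN_{\alpha,\beta}(f_\epsilon)$.

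For $\cN_{\alpha,\beta}$, I would apply Lemma \ref{prod:rules} with $s = \sigma'-1$, $n = m'-1$, $n_0 = m_0$ to obtain
\begin{equation*}
\sum_{|\alpha|+|\beta|\leq\sigma'-1}|\cN_{\alpha,\beta}(f_\epsilon)| \leq C\theta_R\left(\|\varphi_\epsilon * E\|_{W^{1,\infty}} + \|\rho\|_{H^{\sigma'-2}_{m_0}}\right)\|f_\epsilon\|_{H^{\sigma'-1}_{m'-1}}^2.
\end{equation*}
Young's convolution inequality removes the $\epsilon$-dependence of $\|\varphi_\epsilon * E\|_{W^{1,\infty}}$, while Sobolev embedding (using $s_0 > d/2+1$) combined with Lemma \ref{interpolation} gives $\|E\|_{W^{1,\infty}} \lesssim \|\rho\|_{H^{d/2+\eta}_x} \lesssim \|f_\epsilon\|_{H^{s_0}_{m_0}} \leq 2R$ on $\{\theta_R > 0\}$. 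The remaining factor $\|\rho\|_{H^{\sigma'-2}_{m_0}} \lesssim \|f_\epsilon\|_{H^{\sigma'-2}_{m_0}}$ is not controlled by $R$ alone, so to close the estimate I would perform a regularity bootstrap: prove by induction on $k = 0, 1, \ldots, \sigma'-1-s_0$ that
\begin{equation*}
\E\sup_{t\leq T}\|f_\epsilon(t)\|_{H^{s_0+k}_{m_0'(k)}}^p \leq C_{T,R,p,k,f_0}
\end{equation*}
uniformly in $\epsilon$, for a suitable increasing sequence $m_0'(k) \leq m'-1$. The base case $k=0$ follows from the same It\^o scheme applied at $H^{s_0}_{m_0}$: here the cutoff directly bounds $\|\rho\|_{H^{s_0-1}_{m_0}} \leq 2R$, giving a linear-in-$\|f_\epsilon\|_{H^{s_0}_{m_0}}^2$ Gr\"onwall. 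The inductive step at level $s_0+k$ produces
\begin{equation*}
\dee\|f_\epsilon\|_{H^{s_0+k}_{m_0'(k)}}^2 \leq C_R\left(1 + \|f_\epsilon\|_{H^{s_0+k-1}_{m_0}}\right)\|f_\epsilon\|_{H^{s_0+k}_{m_0'(k)}}^2\dt + \dee M_t,
\end{equation*}
where the random coefficient $\|f_\epsilon\|_{H^{s_0+k-1}_{m_0}}$ is uniformly controlled in $L^\infty_t L^p_\omega$ by the previous step. A random-coefficient Gr\"onwall estimate, combined with BDG and H\"older's inequality, then upgrades this to the required uniform bound at level $k$.

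The fractional time-regularity estimate \eqref{equic:lim} will then follow as in the proof of \eqref{equic:it}: each deterministic drift term is bounded in $W^{1,p}([0,T];H^{\sigma'-3}_{m'-2})$ using the $L^\infty_t$ bound at one-higher Sobolev regularity supplied by \eqref{unif:lim}, while the stochastic integral is bounded in $W^{\alpha,p}([0,T];H^{\sigma'-3}_{m'-2})$ via the BDG-type fractional-regularity inequality from \cite{flandoli1995martingale}. The main obstacle will be executing the regularity bootstrap cleanly, tracking constants through both the Sobolev and velocity-moment indices; the structural fact that makes the induction succeed is that in Lemma \ref{prod:rules} the $\|g\|_{H^{s-1}_{n_0}}$ factor uses \emph{one fewer} $x$-derivative than $\|f\|_{H^s_n}$, so each step of the induction reduces cleanly to the previous one.
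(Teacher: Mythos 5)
Your overall strategy is the same as the paper's: both proofs run an induction (bootstrap) on the number of derivatives, using the cutoff $\theta_R$ to control the low norm, and both obtain the time-regularity bound \eqref{equic:lim} exactly as in \eqref{equic:it} via the fractional BDG estimate of \cite{flandoli1995martingale}. The paper's base case is an $H^2_{m'}$ estimate (enough since $2>d/2$ for $d\leq 3$), yours is at $H^{s_0}_{m_0}$; either works, since in both cases the cutoff bounds the relevant electric-field factor.

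However, there is a genuine gap in how you close the inductive step. Your displayed inequality
\begin{equation*}
\dee\|f_\epsilon\|_{H^{s_0+k}_{m_0'(k)}}^2 \leq C_R\left(1 + \|f_\epsilon\|_{H^{s_0+k-1}_{m_0}}\right)\|f_\epsilon\|_{H^{s_0+k}_{m_0'(k)}}^2\dt + \dee M_t
\end{equation*}
has a \emph{random} coefficient multiplying the top-order norm, and the previous step of your bootstrap only provides $\E\sup_{t\leq T}\|f_\epsilon\|_{H^{s_0+k-1}}^p<\infty$, i.e.\ polynomial moments. A Gr\"onwall argument applied to this inequality produces the factor $\exp\bigl(C_R\int_0^T(1+\|f_\epsilon(s)\|_{H^{s_0+k-1}})\ds\bigr)$, and exponential moments of the lower norm are not controlled by polynomial moments; the combination ``random-coefficient Gr\"onwall plus BDG plus H\"older'' therefore does not close. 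This is precisely why the paper does \emph{not} use the crude bound of Lemma \ref{prod:rules} (whose coefficient $\|g\|_{H^{s-1}_{n_0}}$ is the source of your random coefficient) at this stage. Instead, in the inductive step it re-estimates the commutator terms by Gagliardo--Nirenberg interpolation (as in \eqref{unif:lim:ef:interm}) and then Young's inequality, so that the highest norm appears as $CR\|f\|_{H^{s+1}_{m'-1}}^2\dt$ with a \emph{deterministic} $R$-dependent constant (the cutoff bounds $\|\grad_x\varphi_\epsilon\ast E\|_{L^\infty}$), while all the randomness from lower-order norms is pushed into an \emph{additive} term $C(1+\|f\|_{H^{s}_{m'-1}}^p)\dt$ whose expectation is finite by the inductive hypothesis; Gr\"onwall with a deterministic coefficient then closes. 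To repair your argument you must restructure the nonlinear estimate in this rebalanced way (or localize with stopping times, but then the uniform-in-$\epsilon$ moment bounds asserted in the lemma would still require an additional argument to recover).
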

\begin{proof}
The proof proceeds by induction in the number of derivatives\footnote{see for instance \cite{luk2016strichartz} for similar inductive energy estimates for the relativistic Vlasov--Maxwell system.}.
The inductive hypothesis is that for $s > d/2$ derivatives on a solution $f$ of \eqref{SVPFP:reg:reg}, we have:
\begin{equation}
	\E\sup_{t'\leq T}\|f\|_{H_{m'-1}^s}^p \lesssim_{p,R,T,f_0} 1.
\end{equation}
We show that this implies the same estimate for $s+1$. Begin by using It\^o's formula on $\|\pab f\|_{L_{m'-1}^2}^2$ for $|\alpha|+|\beta|=s+1,$ where similarly to \eqref{pab:Ito} we obtain:
\begin{align}
	\dee\|\pab f\|_{L_{m'-1}^2}^2 =& -2\left<\pab(v\cdot\grad_x f), \pab f\right>_{m'-1}\dt \notag\\
	&+2\left<\Delta_v\pab f, \pab f\right>_{m'-1}\dt \notag\\
	&+2\left<\pab(\Div_v(fv)),\pab f\right>_{m'-1}\dt \notag\\
	&-2\left<\theta_R(\|f\|_{H_{m_0}^{s_0}})\pab(\varphi_\epsilon*E\cdot\grad_vf),\pab f\right>_{m'-1}\dt \notag\\
	&-2\left<\pab(\grad_vf\cdot\dee W_t),\pab f\right>_{m'-1} \notag\\
	&+\sum_k\left<\pab[(\sev)^2f],\pab f\right>_{m'-1}\dt \notag\\
	&+\sum_k \|\pab (\sev f)\|_{L_{m'-1}^2}^2 \notag \\
	=& \cT_{\alpha,\beta}(f) + \cD_{\alpha,\beta}(f) + \F_{\alpha,\beta}(f) + \cN_{\alpha,\beta}(f) + \cM_{\alpha,\beta}(f) + \cC_{\alpha,\beta}(f).\label{unif:lim:enest}
\end{align}
The linear terms are treated as in the proof of Lemma \ref{comp:it}, and the only term that requires new attention is $\cN_{\alpha,\beta}(f).$ By the classical Gagliardo-Nirenberg inequality (see e.g. [Proposition A.3 \cite{tao2006nonlinear}]  we have:
\begin{align}
	&\left|\left<\pab (\varphi_\epsilon* E\cdot\grad_vf),\pab f\right>_{m'-1}\right|\notag\\
	 \leq& C\|\varphi_\epsilon*E\|_{W^{1,\infty}}\|f\|_{H_{m'-1}^{s+1}}^2\dt\notag\\
	&+\sum_{\substack{\gamma<\alpha\\|\alpha-\gamma|\geq 2}}\|\partial_x^{\alpha-\gamma}\varphi_\epsilon* E\|_{L_x^{2\frac{s}{|\alpha-\gamma|-1}}}
	\|\grad_v\partial_v^\beta\partial_x^\gamma f\|_{L_{v,m}^2 L_x^{2\frac{s}{|\beta|+|\gamma|+1}}} \|\pab f\|_{L_{m'-1}^2}\dt\notag\\
	\leq& C\|\varphi_\epsilon* E\|_{W^{1,\infty}}\|f\|_{H_{m'-1}^{s+1}}^2\dt\notag\\
	&+C\sum_{\substack{\gamma<\alpha\\|\alpha-\gamma|\geq 2}}\|\grad_x\varphi_\epsilon*E\|_{L_x^\infty}^{\frac{|\beta|+|\gamma|+1}{s}}\|\varphi_\epsilon*E\|_{H_x^{s+1}}^{\frac{|\alpha-\gamma|-1}{s}} \|\grad_v\partial_v^\beta f\|_{L_{m'-1}^2}^{1-\eta}\|f\|_{L_{m'-1}^2}^{1+\eta}\dt,\label{unif:lim:ef:interm}
\end{align}
where for fixed $\gamma,$ the interpolation parameter $\eta$ is given by:
\begin{align}
	\eta =& \frac{|\gamma|}{|\alpha|-1} + \frac{d}{|\alpha|-1}\left(\frac{1}{2}-\frac{|\beta|+|\gamma|+1}{2s}\right)\notag\\
	=&\frac{|\gamma|}{|\alpha|-1} +\frac{d}{|\alpha|-1}\cdot\frac{|\alpha-\gamma|-1}{2s}
\end{align}
and thus $\eta<1$ provided $s > \frac{d}{2}$. By Young's inequality and \eqref{unif:lim:ef:interm} it follows that
\begin{equation}
	\cN_{\alpha,\beta} \leq C\left(1+\|f\|_{H_{m'-1}^{s}}^p\right)\dt + CR\|f\|_{H_{m'-1}^{s+1}}^2\dt. \end{equation}
Plugging this back into \eqref{unif:lim:enest}, and using the same procedure as in the proof of Lemma \ref{comp:it}, we obtain:
\begin{align}
	\dee\|f\|_{H_{m'-1}^{s+1}}^2 \leq& C(1+R)\|f\|_{H_{m'-1}^{s+1}}^2\dt + \sum_{|\alpha|+|\beta| = s+1} \cM_{\alpha,\beta}(f) + \cM_{0,0}(f)\notag\\
	&+C(1+\|f\|_{H_{m'-1}^s}^p)\dt.
\end{align}
We again integrate in time and apply the BDG inequality as in the proof of \eqref{unif:bdd:it} for $p=2$, where the only difference is the term $\|f\|_{H_{m'-1}^s}^p,$ which is now controlled by the inductive hypothesis, and we get:
\begin{equation}
	\E\sup_{t\leq T} \|f\|_{H_{m'-1}^{s+1}}^2\lesssim_{R,p,T,f_0} 1. 
\end{equation}
With this in hand, we can directly transfer the proof of \eqref{unif:bdd:it} for $p>2$ and obtain \eqref{unif:lim}. Then the same argument as the proof of \eqref{equic:it} (i.e. using the variant of BDG from \cite{flandoli1995martingale}*{Lemma 2.1}) gives \eqref{equic:lim}. 

The last thing that remains is to demonstrate the inductive base of the preceding scheme.
Here this is done by  first estimating the $H_{m'}^{2}$ norm of $f$. 
This is sufficient to start the inductive scheme above in $1 \leq d \leq 3$ as $2 > d/2$. 
As the linear terms are always controlled in the same way, we only focus on the electric field contributions. As always, we have:
\begin{align}
	\left|\left<\pab(E\cdot\grad_vf),\pab f\right>_{m'}\right| \leq C\|E\|_{W^{1,\infty}}\|f\|_{H_{m'}^2}^2 
	+ \sum_{\substack{\gamma<\alpha\\|\alpha-\gamma|\ge 2}}\left<\partial_x^{\alpha-\gamma}E\cdot\grad_v\partial_v^\beta\partial_x^\gamma f,\pab f\right>_{m'},
\end{align}
but since only two derivatives are acting on $f$ at this point, the terms in the summation are only present when $|\alpha|=2$ and $|\beta|=|\gamma|=0.$
Let $q = 4$ in $d=1,2$, and for $d \geq 3$ let $q$ be arbitrary such that $2 < q < \frac{2d}{d-2}$.
Then by H\"older's inequality and Sobolev embeddings we have  
\begin{align}
	\left|\left<\partial_x^\alpha E \cdot\grad_v f, \partial_x^\alpha f\right>_{m'}\right| \leq& \|\grad_x^2E\|_{L_x^{\frac{q}{q-1}}} \|\grad_vf\|_{L_{v,m}^2L_x^q}\|f\|_{H_{m'}^2}\notag\\
	\lesssim& \|\grad_x^2E\|_{H_x^{\frac{d(q-2)}{2q}}}\|f\|_{H_{m'}^2}^2\notag\\
%	\lesssim& \|f\|_{H_{m_0}^{1+\frac{d}{4}}} \|f\|_{H_m^2}^2\notag\\
	\lesssim& \|f\|_{H_{m_0}^{s_0}}\|f\|_{H_{m'}^2}^2, \label{ef:two:der}
\end{align}
where we have used that $s_0 > \frac{d}{2}+1 > \frac{d(q-2)}{2q}$ and the embedding $H_x^1 \subset L_x^q$ which holds for all $d \geq 1$ due to our choice of $q$.
From this point on the procedure is the same as in the inductive step. We plug \eqref{ef:two:der} into \eqref{unif:lim:enest} for $\alpha,\beta$ with $|\alpha|+|\beta| = 2$, sum over all such $\alpha,\beta$ as well as the case when $\alpha=\beta=0,$ integrate in time, apply the BDG inequality and Gr\"onwall's lemma and obtain: 
\begin{equation}
	\E\sup_{t\leq T} \|f(t)\|_{H_{m'}^2}^2 \lesssim_{T,R,f_0} 1.
\end{equation}
Then applying the same argument as in the proof of \eqref{unif:bdd:it} for $p>2,$ we also obtain for $p>2$:
\begin{equation}
	\E\sup_{t\leq T} \|f(t)\|_{H_{m'}^2}^p \lesssim_{p,T,R,f_0} 1.
\end{equation}
This provides the inductive base and therefore the proof of the lemma is complete for $1 \leq d \leq 3$.
%To treat the case $d \geq 4$, 
\end{proof}

For solutions to \eqref{SVPFP:reg:reg}, it is unclear how to prove $\set{f^\eps}_{\eps > 0}$ forms a Cauchy sequence as $\eps \to 0$.
Instead, we employ a standard procedure based on the Skorokhod embedding theorem (see e.g. \cite{ikeda2014stochastic}) to produce probabilistically weak (called \emph{martingale}) solutions on a new stochastic basis, and then upgrade them to probabilistically strong using the Gy\"ongy--Krylov lemma from \cite{gyongy1996existence} (see Lemma \ref{GK} below).  
We let $(\eps_n)_{n=1}^\infty$ be a decreasing sequence of positive numbers with $\eps_n\to 0$ as $n\to \infty$ and define the corresponding sequence $f_n := f^{\eps_n}$ of  solutions to \eqref{SVPFP:reg:reg}, which we have shown satisfy the uniform bounds \eqref{unif:lim} and \eqref{equic:lim}. 
For $\alpha \in (0,\frac{1}{2})$ and $p>2$ such that $\alpha p>1$, we define the pathspace
\begin{equation}
	\Xbf := W_{loc}^{\alpha,p}([0,\infty); H_{m'-2}^{\sigma'-3}) \cap L_{loc}^\infty([0,\infty); H_{m'-1}^{\sigma'-1}).
\end{equation}
Recall that since $\alpha p >1$ and $H_{m'-2}^{\sigma'-3}\subset H_{m'-3}^{\sigma'-4}$ compactly, from \cite{flandoli1995martingale}*{Theorem 2.2}, we have:
\begin{equation*}
	W_{loc}^{\alpha,p}([0,\infty);H_{m'-2}^{\sigma'-3}) \subset C_tH_{m'-3}^{\sigma'-4}.
\end{equation*}
By the uniform estimates \eqref{unif:lim} and \eqref{equic:lim}, the laws $\nu^n :=\cL(f_n)$ are bounded in probability in $\Xbf$, and thus they are \textit{tight} in the smaller pathspace
\begin{equation}
	\Xbf_{c} := C([0,\infty);H_{m'-3}^{\sigma'-4})\cap L_{loc}^\infty([0,\infty);H_{m'-1}^{\sigma'-1}).
\end{equation}
Note that the tightness in $L_{loc}^\infty([0,\infty);H_{m'-1}^{\sigma'-1})$ is in the weak-$\star$ topology. We now use this to obtain a martingale solution to \eqref{SVPFP:reg} in high regularity.
%\begin{equation}
%	\tilde f_n \to \tilde f\quad \text{in }\,\Xbf_c'.
%\end{equation}

\begin{proposition}\label{martingale:reg}
	Let $\mu_0$ be a probability measure on $H_{m'}^{\sigma'}$ so that $\int_{H_{m'}^{\sigma'}}\|f\|_{H_{m'}^{\sigma'}}^p\dee\mu_0(f)$ for some $p>2$. Then there exists a stochastic basis $\tilde S = (\tilde\Omega,\tilde\F,\{\tilde\F_t\},\tilde P)$ and a predictable process
	\begin{equation}
		\tilde f \in L_\omega^{p-}C_tH_{m'-3}^{\sigma'-4}\cap L_\omega^{p-}L_{t,loc}^\infty H_{m'-1}^{\sigma'-1}
	\end{equation}
such that $\cL(\tilde f(0))= \mu_0$ and $\tilde f$ solves \eqref{SVPFP:reg} in the sense that, there is a sequence of i.i.d Brownian motions $\set{\widetilde{W}_t^{(k)}}$ such that the following equality holds in $C([0,\infty);H^{\sigma'-3}_{m'-2})$ 
\begin{align*}
	\tilde{f}(t) = f_0 + \int_0^t\left(-v\cdot\nabla_x\tilde{f}(s)-\theta_{R}(\norm{f}_{H^{s_0}_{m_0}}) \tilde{E}(s)\cdot\nabla_v\tilde{f}(s)+\nu\cL \tilde{f}(s)\right)\ds-\int_0^t\nabla_v \tilde{f}(s)\circ\dee \widetilde{W}_s 
	\, \P\text{--a.s.}, 
\end{align*}
with
\begin{align*}
\tilde{E} = \grad_x (-\Delta_x)^{-1} \left( \int_{\mathbb R^d} \tilde{f}(t,\cdot,v) \dee v - 1 \right). 
\end{align*}
\end{proposition}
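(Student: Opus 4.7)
The overall strategy is the standard tightness/Skorokhod/martingale-identification procedure, applied to the sequence $f_n = f^{\varepsilon_n}$ of solutions to \eqref{SVPFP:reg:reg} constructed in Lemma \ref{glbl:reg:reg}. Before we begin it is convenient to rewrite \eqref{SVPFP:reg:reg} in It\^o form (with the explicit correction $\frac12\sum_k(\sigma_k e_k\cdot\nabla_v)^2 f$) so that every stochastic integral below is understood in the It\^o sense. We take the initial datum $f_0$ to be distributed according to $\mu_0$ on the original basis and use it for every $n$.

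First I would verify tightness in $\Xbf_c$. The uniform bounds \eqref{unif:lim}--\eqref{equic:lim} from Lemma \ref{comp:epsilon}, combined with the compact embedding $H_{m'-2}^{\sigma'-3}\hookrightarrow\hookrightarrow H_{m'-3}^{\sigma'-4}$ and the compact embedding $W^{\alpha,p}_{\loc}([0,\infty);H_{m'-2}^{\sigma'-3})\cap L^\infty_{\loc}([0,\infty);H_{m'-1}^{\sigma'-1})\hookrightarrow\hookrightarrow C_tH_{m'-3}^{\sigma'-4}$ (as used in the excerpt via \cite{flandoli1995martingale}) give tightness of the laws $\cL(f_n)$ in $C_t H_{m'-3}^{\sigma'-4}$. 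Tightness in the weak-$\star$ topology of $L^\infty_{t,\loc}H_{m'-1}^{\sigma'-1}$ follows from \eqref{unif:lim} and Banach--Alaoglu (this factor of $\Xbf_c$ is not Polish, so I would invoke Jakubowski's generalization of Skorokhod). Joint tightness of $(f_n,\cW)$ on $\Xbf_c\times C([0,\infty);\mathfrak U_0)$ is then automatic.

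Next, apply Jakubowski--Skorokhod to obtain a new stochastic basis $\tilde S = (\tilde\Omega,\tilde\F,\{\tilde\F_t\},\tilde \P)$ and random variables $(\tilde f_n,\tilde{\mathcal W}_n)$, $(\tilde f,\tilde{\mathcal W})$ with $\cL(\tilde f_n,\tilde{\mathcal W}_n)=\cL(f_n,\cW)$ and $(\tilde f_n,\tilde{\mathcal W}_n)\to(\tilde f,\tilde{\mathcal W})$ $\tilde\P$-a.s.\ in $\Xbf_c\times C([0,\infty);\mathfrak U_0)$. Set $\tilde W_{n,t}^{(k)}:=(\tilde{\mathcal W}_n(t),g_k)_{\mathfrak U_0}$ (and similarly for $\tilde W$); by equality of laws each $\tilde W_{n,\cdot}^{(k)}$ is a family of i.i.d.\ standard Brownian motions adapted to the natural filtration $\tilde\F_t^n$ generated by $(\tilde f_n,\tilde{\mathcal W}_n)$, and the standard argument (a.s.\ equality of laws on continuous functionals of the paths, combined with a Borel--Cantelli / predictable representation argument as in \cite{debussche2011local}) shows each $\tilde f_n$ satisfies the It\^o form of \eqref{SVPFP:reg:reg} on $\tilde S$ with respect to $\tilde{\mathcal W}_n$ and with initial datum having law $\mu_0$.

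The main work is passing to the limit in each term of the It\^o equation. The drift terms $v\cdot\nabla_x\tilde f_n$, $\nu\cL \tilde f_n$, and the second-order correction $\frac12\sum_k(\sigma_k e_k\cdot\nabla_v)^2\tilde f_n$ are linear and pass to the limit using the strong convergence of $\tilde f_n\to\tilde f$ in $C([0,T];H_{m'-3}^{\sigma'-4})$ (losing at most two derivatives and one moment). For the nonlinearity
\[
\theta_R(\|\tilde f_n\|_{H_{m_0}^{s_0}})\,(\varphi_{\varepsilon_n}\ast \tilde E_n)\cdot\nabla_v\tilde f_n,
\]
I would use that $\tilde f_n\to\tilde f$ strongly in $C_tH^{\sigma'-4}_{m'-3}$ (hence $\tilde\rho_n\to\tilde\rho$ and $\tilde E_n\to\tilde E$ in $C_t H^{\sigma'-4}_x$), that $\varphi_{\varepsilon_n}\ast g\to g$ in $H^{\sigma'-4}_x$ uniformly on bounded subsets of $H^{\sigma'-4}_x$ as $\varepsilon_n\to 0$, and that $s_0<\sigma'-4$ combined with Lemma \ref{comp:epsilon} and the Vitali convergence theorem (using the $L^{p-}_\omega$ bounds for excess integrability) yields convergence in $L^1_\omega L^1_t H^{\sigma'-4}_{m'-3}$ after using the continuity of $\theta_R$. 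For the It\^o stochastic term $\int_0^t\nabla_v\tilde f_n\cdot d\tilde{\mathcal W}_n$, I would apply the standard convergence lemma for stochastic integrals against sequences of cylindrical Wiener processes (see \cite{debussche2011local}*{Lemma 2.1} or \cite{GV14}), whose hypotheses follow from $\tilde f_n\to\tilde f$ in $C_t H_{m'-3}^{\sigma'-4}$ and the $\varepsilon_n$-uniform $L_\omega^{p-}L_{t,\loc}^\infty H_{m'-1}^{\sigma'-1}$ bound. The identification $\cL(\tilde f(0))=\mu_0$ follows from $\tilde f_n(0)\to\tilde f(0)$ a.s.\ in $H^{\sigma'-4}_{m'-3}$ together with $\cL(\tilde f_n(0))=\mu_0$. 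The regularity $\tilde f\in L_\omega^{p-}C_tH_{m'-3}^{\sigma'-4}\cap L_\omega^{p-}L^\infty_{t,\loc}H_{m'-1}^{\sigma'-1}$ is finally obtained from the uniform bounds \eqref{unif:lim}--\eqref{equic:lim}, lower semicontinuity of norms under weak/weak-$\star$ convergence, and Fatou's lemma.

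The main obstacle I expect is the passage to the limit in the stochastic integral with a simultaneously varying driving noise $\tilde{\mathcal W}_n$: the martingale-convergence argument needs $(\sigma_k e_k\cdot\nabla_v)\tilde f_n$ to converge in a strong enough sense (at least $L^2_\omega L^2_tL^2_{x,v}$ along a subsequence) and the filtration issues (the integrand must be adapted to the filtration generated by $\tilde{\mathcal W}_n$ for each $n$) must be handled carefully; this is exactly where the convergence lemma from \cite{debussche2011local} is useful, and the summability hypothesis \eqref{coloring} on the coloring coefficients $\sigma_k$ guarantees the Hilbert--Schmidt bounds needed to apply it. A minor technical point is that $\Xbf_c$ is not Polish (because of the weak-$\star$ factor), which is why Jakubowski's version of Skorokhod is needed.
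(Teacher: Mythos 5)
Your proposal is correct and follows essentially the same route as the paper: tightness from the uniform bounds of Lemma \ref{comp:epsilon}, a Skorokhod-type embedding for the joint laws $\cL(f_n,\mathcal W)$ on $\Xbf_c\times C([0,\infty);\mathfrak U_0)$, transfer of the regularized equation to the new basis, and passage to the limit using \cite{debussche2011local}*{Lemma 2.1} for the stochastic integrals. The only differences are cosmetic refinements — you invoke Jakubowski's version of Skorokhod to handle the non-Polish weak-$\star$ factor and identify the equation on the new basis via a martingale/law argument, whereas the paper cites the Bensoussan mollification technique for that step — but the structure and key ingredients coincide.
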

\begin{proof}
Let $\mu^n = \cL(f_n,\mathcal{W})$ in $\Xbf_c\times C([0,\infty);\mathfrak{U}_0).$ The sequence $(\mu^n)_{n=1}^\infty$ is tight by the uniform estimates \eqref{unif:lim} and \eqref{equic:lim} combined with the fact that its projection onto $C([0,\infty);\mathfrak{U}_0)$ is the same for each $n$. By Prokhorov's theorem, $(\mu^n)_{n=1}^\infty$ has a weakly convergent subsequence - reindexed to $\mu^n$. By Skorokhod's embedding theorem, there exists a new probability space $(\tilde \Omega, \tilde \F, \tilde \P)$ and on it random elements $(\tilde f_n, \tilde{\mathcal{W}}_n) $ with laws $\mu^n$ which converge $\tilde\P$--a.s. to some limit $(\tilde f, \tilde{\mathcal{W}})$ in the product topology of $\Xbf_c\times C([0,\infty);\mathfrak{U}_0)$. Then by a variation of the mollification technique employed in the proof of \cite{bensoussan1995stochastic}*{Equation 4.17} \footnote{See also \cite{brzezniak2020well}*{Proposition 3.2, (iii)} for an application to the primitive equations where the noise is present as a stochastic transport term.} the random elements $(\tilde f_n, \tilde{\mathcal{W}}_n)$ satisfy \eqref{SVPFP:reg:reg} just like $(f_n,\mathcal{W})$, but in the new probability space $(\tilde \Omega,\tilde \F, \tilde \P)$:

\begin{equation*}
	\tilde f_n(t)-\tilde f_n(0) + \int_0^t\left( v\cdot\grad_x\tilde f_n+\tilde E_n\cdot\grad_v\tilde f_n - \frac{1}{2}\sum_k(\sev)^2\tilde f_n - \nu\cL\tilde f_n\right)\ds +\int_0^t\grad_v\tilde f_n\cdot\dee \tilde W_t^n = 0,
\end{equation*}
where we have denoted by $\tilde W_t^n$ the external electric field corresponding to $\tilde{\mathcal{W}}_n$; that is to say, if:
\begin{equation*}
	\tilde{\mathcal{W}}_n = \sum_k g_k \tilde W_k^n,
\end{equation*}
then $\tilde W^n$ is simply given by:
\begin{equation*}
	\tilde W^n = \sum_k \sigma_k e_k \tilde W_k^n.
\end{equation*}

The passage to the limit $n\to \infty$ in the SPDEs \eqref{SVPFP:reg:reg} satisfied by $(\tilde f_n,\tilde{\mathcal{W}}_n)$ to obtain that the limit $(\tilde f,\tilde{\mathcal{W}})$ solves \eqref{SVPFP:reg} can be carried out by combining the convergences $\tilde f_n \to \tilde f$ and $\tilde{\mathcal{W}_n} \to \tilde{\mathcal{W}}$ with \cite{debussche2011local}*{Lemma 2.1}, so we omit the proof for technicalities. We simply note that the presence of the transport noise does not cause any additional difficulties in our setting.
\end{proof}
As our goal is to construct solutions in $\mathcal{S}$, we need to \say{upgrade} the martingale solutions of the preceding lemma to probabilistically strong solutions.
With the above in mind, we now state the Gy\"ongy--Krylov lemma from \cite{gyongy1996existence}, which will allow us to combine the tightness of $(f_n)_{n=1}^\infty$ with the pathwise uniqueness of the limit (Lemma \ref{uniqueness:reg} below) to show that in fact \eqref{SVPFP:reg} has (unique) global solutions on the original stochastic basis $\mathcal{S}$.
\begin{lemma}[Gy\"ongy--Krylov]\label{GK}
	Let $(Y_n)_{n=1}^\infty$ be a sequence of $X$-valued random variables, where $X$ is a complete separable metric space. Then $(Y_n)_{n}$ converges in probability if and only if for every two subsequences $Y_{n_k},Y_{l_k}$ the joint sequence $(Y_{n_k},Y_{l_k})$ has a subsequence $(Y_{n_{k'}},Y_{l_{k'}})$ whose laws converge weakly to a probability measure $\nu$ supported on the diagonal of $X\times X$:
	\begin{equation}
		\nu\left(\left\{(x,y)\in X\times X: \, x=y \right\}\right) = 1. \label{GK:diag}
	\end{equation}
\end{lemma}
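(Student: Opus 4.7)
The forward direction ($\Rightarrow$) is essentially immediate. If $Y_n \to Y$ in probability in the Polish space $X$, then for any two subsequences $(Y_{n_k})$ and $(Y_{l_k})$ we also have $Y_{n_k} \to Y$ and $Y_{l_k} \to Y$ in probability, and hence the joint pair converges in probability (and therefore in law) to $(Y,Y)$. The law of $(Y,Y)$ is concentrated on the diagonal, so taking the whole sequence as the ``sub-subsequence'' one can take $\nu = \mathcal{L}(Y,Y)$.

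For the converse direction ($\Leftarrow$), I would argue by contradiction using the fact that in a complete separable metric space convergence in probability is equivalent to being Cauchy in probability. Assume $(Y_n)$ is not Cauchy in probability. Then there exist $\eps,\delta>0$ and subsequences $(Y_{n_k}), (Y_{l_k})$ such that
\begin{equation*}
\PP\bigl(d(Y_{n_k}, Y_{l_k}) > \eps\bigr) \geq \delta \qquad \text{for all } k.
\end{equation*}
By the hypothesis applied to these two subsequences, we can extract a further joint subsequence $(Y_{n_{k'}}, Y_{l_{k'}})$ whose laws converge weakly to a probability measure $\nu$ on $X \times X$ with $\nu(\{(x,y) : x = y\}) = 1$.

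The contradiction is then obtained by pushing forward through the continuous map $d : X \times X \to [0,\infty)$. By the continuous mapping theorem, $d(Y_{n_{k'}}, Y_{l_{k'}})$ converges in law to the pushforward $d_\ast \nu$, which is the Dirac mass $\delta_0$ because $\nu$ is supported on the diagonal. Convergence in distribution to a constant is equivalent to convergence in probability, so $\PP(d(Y_{n_{k'}}, Y_{l_{k'}}) > \eps) \to 0$, contradicting the uniform lower bound $\delta$. Hence $(Y_n)$ is Cauchy in probability and, by completeness of $X$ (via Borel--Cantelli on a rapidly Cauchy subsequence), converges in probability to some $X$-valued random variable. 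The only mild subtlety is remembering the Cauchy-in-probability characterization and invoking it correctly; no real analytic obstacle appears, and the argument is independent of any specific structure of the SPDE, which is why the lemma is a convenient abstract tool in the application to \eqref{SVPFP:reg}.
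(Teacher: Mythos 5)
Your proof is correct. Note that the paper does not actually prove this lemma — it states it with a citation to \cite{gyongy1996existence} — so there is no in-paper argument to compare against; your proof is essentially the standard one from that reference. Both directions are handled properly: the forward direction via convergence in law of the pair $(Y_{n_k},Y_{l_k})$ to $(Y,Y)$, and the converse via the contrapositive of the Cauchy-in-probability characterization, pushing the limiting law forward under the (continuous) metric $d$ to get $\delta_0$ and using the portmanteau inequality for the closed set $\{t\geq \eps\}$ to contradict the uniform lower bound $\delta$. The final appeal to completeness (rapidly Cauchy subsequence plus Borel--Cantelli) to upgrade Cauchy-in-probability to convergence in probability is also the standard and correct step.
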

With this lemma at hand, we now set to prove pathwise uniqueness of solutions to \eqref{SVPFP:reg}, which is the content of the following:
\begin{lemma}\label{uniqueness:reg}
	Let $f, f'$ be global solutions to \eqref{SVPFP:reg} on the same stochastic basis with $f(0) = f'(0) = f_0$ almost surely, where $\E\|f_0\|_{H_{m'}^{\sigma'}}^p<\infty$ for some $p>2$ and such that $f,f' \in L_\omega^{p-}C_tH_{m'-3}^{\sigma'-4}\cap L_\omega^{p-}L_{t,loc}^\infty H_{m'-1}^{\sigma'-1}$. Then $f,f'$ are indistinguishable, that is:
	\begin{equation}
		\P\left(f(t)=f'(t) \text{ for all } t\geq 0\right) = 1. \label{r:indist}
	\end{equation}
\end{lemma}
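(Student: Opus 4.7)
The approach closely mirrors the proof of Lemma \ref{uniqueness:reg:reg}, but with two modifications: we work one regularity index lower (since the solutions are only continuous in $H^{\sigma'-4}_{m'-3}$ and merely bounded in $L^\infty_{t,\loc} H^{\sigma'-1}_{m'-1}$), and we no longer have the spatial mollifier $\varphi_\eps$ to cheaply control $L^\infty$-type norms of the electric field. The natural energy space is $H^{\sigma'-2}_{m'-2}$: it is compatible with the second-order It\^o corrections and the anisotropic Vlasov structure, it satisfies $\sigma' - 2 > d/2 + 1$ (so that Lemma \ref{prod:rules} applies, since $\sigma' > \sigma + 4 > s_0 + 4 > d/2 + 5$), and it sits strictly below the highest available $L^\infty_t$-regularity.

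First, I would apply It\^o's formula to $\norm{\pab(f - f')}_{L^2_{m'-2}}^2$ for $\abs{\alpha} + \abs{\beta} \leq \sigma' - 2$, producing an analogue of \eqref{reg:reg:en:est}. The transport, friction, dissipation, and It\^o-correction cancellations from Lemma \ref{comp:it} carry over unchanged, since they are algebraic and indifferent to the nonlinear drift. The electric-field difference splits as $\cN_1 + \cN_2 + \cN_3$ exactly as in Lemma \ref{uniqueness:reg:reg} (the cutoff difference, the field difference, and the $\grad_v$ difference), but the absence of $\varphi_\eps$ forces an honest use of Lemma \ref{prod:rules}. Combining \eqref{prod:rule:1}, \eqref{prod:rule:2}, Lemma \ref{interpolation}, the mean value theorem for $\theta_R$, and the embedding $H^{\sigma'-2}_{m'-2} \supset H^{s_0}_{m_0}$ (since $s_0 \leq \sigma' - 2$), I expect the estimates
\begin{align*}
\abs{\cN_1} &\lesssim_R \norm{f}_{H^{\sigma'-1}_{m'-1}}^2 \norm{f - f'}_{H^{\sigma'-2}_{m'-2}}^2,\\
\abs{\cN_2} &\lesssim \norm{f}_{H^{\sigma'-1}_{m'-1}} \norm{f - f'}_{H^{\sigma'-2}_{m'-2}}^2,\\
\abs{\cN_3} &\lesssim_R \norm{f - f'}_{H^{\sigma'-2}_{m'-2}}^2,
\end{align*}
where the cutoff factors $\theta_R, \theta_R'$ uniformly bound any $H^{s_0}_{m_0}$-norm appearing as a prefactor.

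Second, I would localize with the stopping time
\begin{equation*}
\xi_K := \inf\set{t \geq 0 \,:\, \sup_{s \leq t}\left(\norm{f(s)}_{H^{\sigma'-1}_{m'-1}} + \norm{f'(s)}_{H^{\sigma'-1}_{m'-1}}\right) > K},
\end{equation*}
which, by $f, f' \in L^{p-}_\omega L^\infty_{t,\loc} H^{\sigma'-1}_{m'-1}$, is almost surely positive for $K$ large enough (depending on $\omega$) and tends to $\infty$ as $K \to \infty$. Integrating on $[0, t \wedge \xi_K]$, summing over $\abs{\alpha} + \abs{\beta} \leq \sigma' - 2$, and using BDG to absorb half of the supremum of the martingale term onto the left-hand side (exactly as in the derivation of \eqref{reg:reg:n1}--\eqref{reg:reg:n3}) yields
\begin{equation*}
\E \sup_{s \leq t \wedge \xi_K} \norm{f(s) - f'(s)}_{H^{\sigma'-2}_{m'-2}}^2 \lesssim_{R,K} \int_0^t \E \sup_{s' \leq s \wedge \xi_K} \norm{f - f'}_{H^{\sigma'-2}_{m'-2}}^2 \, ds.
\end{equation*}
Gr\"onwall's lemma forces the left-hand side to vanish, and sending $K \to \infty$ together with the continuity of paths in $H^{\sigma'-4}_{m'-3}$ gives \eqref{r:indist}.

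The main obstacle I anticipate is justifying It\^o's formula rigorously at the chosen regularity, since a priori $f - f'$ is only weakly continuous in $H^{\sigma'-2}_{m'-2}$. The standard remedy is to apply It\^o's formula to a mollified difference $\rho_\delta \ast_{x,v} (f - f')$, control the commutators with $v \cdot \grad_x$, $E \cdot \grad_v$, and $\sev$ uniformly in $\delta$ using the $L^\infty_t H^{\sigma'-1}_{m'-1}$ bound, and pass to the limit $\delta \to 0$ --- in the same spirit as the Bensoussan-type argument cited in the proof of Proposition \ref{martingale:reg}.
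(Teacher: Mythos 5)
Your argument follows the same skeleton as the paper's proof: stopping times $\xi_K$ built from the $H^{\sigma'-1}_{m'-1}$ norm, an energy estimate on the difference in a norm strictly below the $L^\infty_t$ regularity, the three-way splitting of the nonlinearity (cutoff difference, field difference, $\grad_v$ difference) handled via the mean value theorem for $\theta_R$ and Lemma \ref{prod:rules}, then BDG, Gr\"onwall, and $K \to \infty$. The one substantive deviation is the choice of energy space: you work in $H^{\sigma'-2}_{m'-2}$, while the paper performs the estimate in the much lower norm $H^{s_0}_{m_0}$. Since $\sigma' > s_0 + 4$ and $m' > m_0 + 3$, the paper's energy space sits \emph{inside} the continuity class $C_t H^{\sigma'-4}_{m'-3}$ assumed in the lemma, so the It\^o formula is applied to a quantity that is genuinely continuous in time and the mollification/commutator step you flag as the ``main obstacle'' never arises; your higher choice buys nothing for the conclusion (indistinguishability in any norm suffices, by the $L^\infty_t H^{\sigma'-1}_{m'-1}$ bound and interpolation) but forces you to patch exactly that issue. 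Two minor points: your bound $\abs{\cN_3} \lesssim_R \norm{f-f'}_{H^{\sigma'-2}_{m'-2}}^2$ is not quite right as stated, since \eqref{prod:rule:2} at level $\sigma'-2$ produces the prefactor $\norm{E'}_{W^{1,\infty}} + \norm{f'}_{H^{\sigma'-3}_{n_0}}$, and the second term is not controlled by the cutoff (it is not an $H^{s_0}_{m_0}$ quantity); the constant must also depend on $K$ through $\xi_K$, which is harmless but should be said (at the paper's level $s_0$ the prefactor really is controlled by $R$ alone). Otherwise the estimates close as you claim and the proof is correct.
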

\begin{proof}
	First of all, notice that since $f,f'\in L_{t,loc}^\infty H_{m'-1}^{\sigma'-1},$ for $K>0$ the stopping times
	\begin{equation}\xi_K := \inf\{t>0: \|f\|_{H_{m'-1}^{\sigma'-1}}^2 + \|f'\|_{H_{m'-1}^{\sigma'-1}}^2 \geq K\}
	\end{equation} are well defined and satisfy $\xi_K\to \infty$ as $K\to \infty$, $\P$--almost surely.
	We now perform an energy estimate on $H_{m_0}^{s_0}$. We use It\^o's formula on the quantity $\|\pab(f-f')\|_{L_{m_0}^2}^2,$ for $|\alpha|+|\beta| = s_0$:
	\begin{align}
		\dee\|\pab(f-f')\|_{L_{m'}^2}^2 =& -2\left<\pab(v\cdot\grad_x (f-f')),\pab(f-f')\right>_{m_0}\dt\notag\\
		&+2\left<\Delta_v\pab(f-f'), \pab(f-f')\right>_{m_0}\dt\notag\\
		&+2\left<\pab\Div_v((f-f') v), (f-f')\right>_{m_0}\dt\notag\\
		&-2\left<\pab(\theta_RE^f\cdot\grad_vf-\theta_R'E^{f'}\cdot\grad_vf'), \pab(f-f')\right>_{m_0}\dt\notag\\
		&-2\left<\pab(\grad_v(f-f')\cdot\dee W_t),\pab(f-f')\right>_{m_0}\notag\\
		&+\sum_k \left<\pab((\sev)^2(f-f')),\pab(f-f')\right>_{m_0}\dt\notag\\
		&-\sum_k \|\pab(\sev (f-f'))\|_{L_{m_0}^2}^2\dt.
	\end{align}
Clearly, all terms except those involving the electric fields can be estimated as in the proof of \eqref{unif:bdd:it}, so we only examine the electric field term:
\begin{align}
	&\left|\left<\pab(\theta_R E^f\cdot\grad_vf - \theta_R' E^{f'}\cdot\grad_vf'),\pab(f-f')\right>_{m_0}\right| \notag\\
\leq& \left|(\theta_R-\theta_R')\left<\pab(E^f\cdot\grad_vf), \pab(f-f') \right>_{m_0}\right|\notag\\
&+\left|\theta_{R'}\left<\pab((E^f-E^{f'})\cdot\grad_vf'),\pab(f-f')\right>_{m_0}\right|\notag\\
&+\left|\theta_{R'}\left<\pab(E^{f'}\cdot\grad_v(f-f')),\pab (f-f')\right>_{m_0}\right|\notag\\
\leq& C_R\left|\|f\|_{H_{m_0}^{s_0}}-\|f'\|_{H_{m_0}^{s_0}}\right|\cdot\|E^f\|_{H_x^{s_0}}\|f-f'\|_{H_{m_0}^{s_0}}\notag\\
&+C\|E^f-E^{f'}\|_{H^{s_0}}\|\grad_vf\|_{H_{m_0}^{s_0}}\|f-f'\|_{H_{m_0}^{s_0}}\notag\\
&+CR\|f-f\|_{H_{m_0}^{s_0}}^2\notag\\
\lesssim_{R,K}&\|f-f'\|_{H_{m_0}^{s_0}}^2
\end{align}
for $t \leq \xi_K$, where we have used the mean value theorem on $\theta_R$ and Lemma \ref{prod:rules}. Thus, arguing similarly to the proof of Lemma \ref{uniqueness:reg:reg} (i.e. by the BDG and Gr\"onwall inequalities), we have:
\begin{equation}\E\sup_{t' \leq t\wedge \xi_K\wedge T}\|f-f'\|_{H_{m_0}^{s_0}}^2 \leq C_{R,K,T}\|f(0)-f'(0)\|_{H_{m_0}^{s_0}}^2 = 0.
\end{equation}
Since $\xi_K\to \infty$ as $K\to \infty,$ the monotone convergence theorem implies that for all $T>0$ we have:
\begin{equation}
	\E\sup_{t\leq T} \|f(t)-f'(t)\|_{H_{m_0}^{s_0}}^2 = 0,
\end{equation}
which implies \eqref{r:indist} since $T$ is arbitrary.
\end{proof}
We now have everything we need to (subsequentially) pass to the limit $\epsilon \to 0$ in the original stochastic basis.
\begin{proof}[Proof of Lemma \ref{glbl:reg}]
We define the joint laws $\mu^{n,l}=\cL( f_n, f_l, \mathcal{W}),$ Similarly to the discussion in Lemma \ref{martingale:reg}, for any sequence $\mu^{n_k,l_k}$ with $n_k,l_k\to \infty$ as $k\to \infty,$ by Prokhorov's theorem the estimates \eqref{unif:lim} and \eqref{equic:lim} (and the fact that $\{\mathcal{W}\}$ is a singleton) provide a weakly convergent subsequence of probability measures in $\Xbf_c\times\Xbf_c\times C([0,\infty);\mathfrak{U}_0)$, which we still denote (after relabelling) by $\mu^{n_k,l_k}$, and we denote its limit by $\mu$. By the Skorokhod embedding theorem, we can construct a new stochastic basis again denoted by $\tilde{\mathcal{S}}=(\tilde{\Omega},\tilde{F},\tilde{\P})$ and on it a sequence of random elements $(\tilde f_{n_k},\tilde f_{l_k},\tilde{\mathcal{W}}^k)$ and $(\tilde f, \tilde{\tilde{f}}, \tilde{\mathcal{W}})$ such that $\cL(\tilde f_{n_k},\tilde f_{l_k},\tilde{\mathcal{W}}^k) = \mu^{n_k,l_k}$, $\cL(\tilde f, \tilde{\tilde f},\tilde{\mathcal{W}})$ and:
\begin{gather*}
	(\tilde f_{n_k},\tilde f_{l_k},\tilde{\mathcal{W}^k}) \to (\tilde f, \tilde{\tilde f},\tilde{\mathcal{W}})\quad \text{ in } \Xbf_c\times \Xbf_c\times C([0,\infty);\mathfrak{U}_0), \quad \tilde\P\text{--a.s..}
\end{gather*} 
As in Lemma \ref{martingale:reg}, $(\tilde f_{n_k},\tilde{\mathcal{W}}^k)$ and $(\tilde f_{l_k},\tilde{\mathcal{W}}^k)$ satisfy the SPDE \eqref{SVPFP:reg:reg} in the new stochastic basis (by the method of \cite{bensoussan1995stochastic}*{Section 4.3.4}), so we can pass to the limit $k\to \infty$ in all the terms of \eqref{SVPFP:reg:reg} for $(\tilde f_{n_k},\tilde{\mathcal{W}}^k)$ and $(\tilde f_{l_k},\tilde{\mathcal{W}^k})$ (using \cite{debussche2011local}*{Lemma 2.1} for the stochastic integrals) to show that $(\tilde f,\tilde{\mathcal{W}})$ $(\tilde{\tilde{f}},\tilde{\mathcal{W}})$ are solutions to \eqref{SVPFP:reg} on the new stochastic basis. Since $\P(f_{n_k}(0)=f_{l_k}(0))=1$, we also have $\tilde\P(\tilde f_{n_k}(0)=\tilde f_{l_k}(0))=1$, and thus in the limit $k\to\infty$ we obtain $\tilde\P(\tilde f(0) = \tilde{\tilde f}(0))=1$. Therefore, by Lemma \ref{uniqueness:reg}, $\tilde f$ and $\tilde{\tilde f}$ are indistinguishable. This means that the measure $\bar{\mu}$, defined as the projection of $\mu$ onto the first two components $\Xbf_c\times\Xbf_c$, is in fact supported on the diagonal of $\Xbf_c\times\Xbf_c$. Thus, by Lemma \ref{GK}, a subsequence of $(f_n)$ converges in probability in the original stochastic basis $\mathcal{S}$ in the topology of $\Xbf_c'$ to a limiting process $f$ which solves \eqref{SVPFP:reg}. This concludes the proof of Lemma \ref{glbl:reg}.
\end{proof}

%The purpose of the next section is to remove the assumption $f_0\in H_{m'}^{\sigma'}$ by passing to a suitable limit of regularization-localization of general initial data $f_0\in H_m^\sigma$.

\section{Proof of main theorem} \label{sec:iv}
In this section, with the results of Section \ref{sec:iii} at hand, we prove the main result of the paper, Theorem \ref{thm:main}. At first, we consider initial data $f_0$ satisfying $\|f_0\|_{H_m^\sigma}\leq M<\infty$ almost surely, for a fixed deterministic $M$. This assumption can be removed at the end by a cutting argument similar to that of Lemma \ref{lem:locHiRegSVPFP}. We treat the initial data $f_0$ with a sequence of regularization and velocity cutoff operators $\mathcal{R}^n,$ obtaining a sequence of regularized data 
\begin{equation*}
	f_0^n := \mathcal{R}^n f_0, 
\end{equation*}
defined as
\begin{equation*}
	\mathcal{R}^n f := \theta_n(v)n^{2d}\eta\left(\frac{\cdot}{n}\right)\ast_{x,v}f, 
\end{equation*}
where $\eta \in C_c^\infty(\mathbb{R}^{2d})$ satisfies $\eta(x,v)\geq 0$ and $\iint \eta(x,v)\dv\dx = 1$.
We note the following properties of this regularization. The proofs are standard and are omitted for brevity.
\begin{lemma} \label{lem:RegProp}
Let $s' \geq s \geq 0$ be integers and $m' \geq m \geq 0$ be arbitrary.
Then,
\begin{itemize}
\item[(i)] The regularization operators $\mathcal{R}^n$ are uniformly bounded on $H^{\sigma}_m$
\begin{align*}
\sup_{n \geq 1}\norm{\mathcal{R}^n f}_{H^{\sigma}_{m}} \lesssim_{m,\sigma} \norm{f}_{H^{\sigma}_m}. 
\end{align*}
\item[(ii)] The regularization operators satisfy: for $f \in H^{\sigma}_m$
\begin{align*}
\norm{\mathcal{R}^n f}_{H^{\sigma'}_{m'}} \lesssim n^{\sigma'-\sigma} n^{(m' - m)/2} \norm{f}_{H^{\sigma}_m}. 
\end{align*}
\item[(iii)] The regularization operators converge in the following senses: for $f \in H^{\sigma}_m$ there holds 
\begin{align}
& \lim_{n \to \infty} \norm{\mathcal{R}^n f - f}_{H^{\sigma}_m} = 0 \\
& \lim_{n \to \infty} n\norm{\mathcal{R}^n f - f}_{H^{\sigma-1}_m} = 0. \label{ineq:RnCopt}
\end{align}
%\item[(iv)] If $\set{f_k}_{k \in \mathbb N}$ is a Cauchy sequence in $H^{\sigma}_m$ then
%\begin{align*}
%\end{align*}
\end{itemize}
\end{lemma}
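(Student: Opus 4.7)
\medskip
\noindent\textbf{Proof plan.} All three statements are standard consequences of how the spatial mollifier, which we write as $\eta_n := n^{2d}\eta(n\cdot)$ (a unit-mass mollifier supported in a ball of radius $\lesssim 1/n$), and the velocity cutoff $\theta_n(v)$ interact with the weighted Sobolev norms. I would prove them in the order $(i)\to (ii)\to (iii)$.

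For $(i)$, the starting point is the Leibniz rule
\begin{align*}
\partial_x^\alpha\partial_v^\beta(\mathcal{R}^n f)=\sum_{\beta'\leq \beta}\binom{\beta}{\beta'}\,\partial_v^{\beta-\beta'}\theta_n(v)\cdot \partial_x^\alpha\partial_v^{\beta'}\bigl(\eta_n \ast_{x,v} f\bigr),
\end{align*}
combined with the uniform bound $|\partial_v^\gamma\theta_n|\lesssim_\gamma 1$ (which follows from $\theta_n(v)=\theta(v/n)$ and $n\geq 1$). The derivatives $\partial_x^\alpha\partial_v^{\beta'}$ commute with the convolution, and Young's inequality in $(x,v)$ together with the observation that $\brv^m$ fluctuates only by a universal constant across the $1/n$-support of $\eta_n$ yields $\norm{\eta_n \ast \partial_x^\alpha\partial_v^{\beta'} f}_{L^2_m}\lesssim \norm{\partial_x^\alpha\partial_v^{\beta'} f}_{L^2_m}$. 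Summing over $|\alpha|+|\beta|\leq\sigma$ gives $(i)$.

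For $(ii)$, I redistribute derivatives. Any derivative $\partial_x^{\alpha}\partial_v^{\beta'}$ applied to $\eta_n\ast g$ can be rewritten as $(\partial_x^\alpha\partial_v^{\beta'}\eta_n)\ast g$, and the mollifier scaling gives $\norm{\partial_x^\alpha\partial_v^{\beta'}\eta_n}_{L^1}\lesssim n^{|\alpha|+|\beta'|}$. Hence, of any $\sigma'$ derivatives acting on $\mathcal{R}^n f$, I keep $\sigma$ on $f$ and move the remaining $\leq \sigma'-\sigma$ onto $\eta_n$, producing the factor $n^{\sigma'-\sigma}$. For the weight, $\mathcal{R}^n f$ is supported in $\{|v|\lesssim n\}$ because of $\theta_n$ (with a slight enlargement from the velocity convolution), so $\brv^{m'}\lesssim n^{m'-m}\brv^m$ on the support, contributing the additional $n^{(m'-m)/2}$ after the square root. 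Multiplying the two contributions delivers $(ii)$.

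For $(iii)$, the first convergence follows by splitting
\begin{align*}
\mathcal{R}^n f - f = \theta_n(v)\bigl(\eta_n\ast f - f\bigr) + (\theta_n(v)-1)\,f;
\end{align*}
the first piece tends to zero in $H^\sigma_m$ by standard (weighted) mollifier convergence, the second by dominated convergence using $|\theta_n-1|\leq \1_{|v|\geq n}$ and $f\in H^\sigma_m$. The enhanced rate $n\norm{\mathcal{R}^n f-f}_{H^{\sigma-1}_m}\to 0$ is the only delicate point. Choosing $\eta$ symmetric (without loss of generality, after replacing $\eta$ by $\tfrac12(\eta(\cdot)+\eta(-\cdot))$) forces $\hat\eta(\xi)-1 = O(|\xi|^2)$ near the origin, improving the mollifier estimate to $\norm{\eta_n\ast g - g}_{H^{\sigma-1}}\lesssim n^{-2}\norm{g}_{H^{\sigma+1}}$; combining this with the uniform bound $\norm{\eta_n\ast g-g}_{H^{\sigma-1}}\lesssim n^{-1}\norm{g}_{H^\sigma}$ and density of $H^{\sigma+1}$ in $H^\sigma$ closes the spatial piece. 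For the cutoff contribution, $|\theta_n-1|\leq \1_{|v|\geq n}$ so $\brv^{-1}\leq n^{-1}$ on the support, and a density-in-$v$ argument turns boundedness into the required $o(n^{-1})$.

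\medskip
\noindent\textbf{Main obstacle.} The genuinely non-routine step is the $o(n^{-1})$ rate in $(iii)$: for a general (non-symmetric) $\eta$ one has only $n\norm{\eta_n\ast f-f}_{H^{\sigma-1}}=O(1)\norm{f}_{H^\sigma}$, which gives boundedness but not decay. Restoring the factor of $1/n$ requires either choosing $\eta$ symmetric (harmless here) or a careful density argument exploiting the extra smoothness available on a dense subspace; the rest of the lemma is routine bookkeeping of weights, derivative counts, and support considerations.
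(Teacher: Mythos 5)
The paper itself offers no argument to compare against: Lemma \ref{lem:RegProp} is stated with ``the proofs are standard and are omitted for brevity,'' so your proposal must stand on its own. Parts (i), (ii), and the first limit in (iii) are correct and are exactly the routine bookkeeping you describe: Leibniz rule with $|\partial_v^\gamma\theta_n|\lesssim 1$, weighted Young's inequality using that $\brv^m$ is comparable across the $O(1/n)$ support of the mollifier, moving each excess derivative onto the mollifier at cost $n$, and $\brv^{m'}\lesssim n^{m'-m}\brv^m$ on the support $\{|v|\lesssim n\}$, which indeed gives the factor $n^{(m'-m)/2}$ at the level of the norm with the paper's convention that the squared norm carries the weight $(1+|v|^2)^{m/2}$. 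You are also right—and it is a genuine catch—that the $o(1/n)$ gain for the mollification piece requires the first moment of $\eta$ to vanish: for a general nonnegative unit-mass $\eta$ one has $n(\eta_n\ast f-f)\to -\bigl(\int y\,\eta(y)\,\dee y\bigr)\cdot\nabla_{x,v}f\neq 0$ in $H^{\sigma-1}_m$, so symmetrizing $\eta$ (or assuming vanishing first moments) is needed even though the paper does not state it.

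The gap is in your treatment of the velocity-cutoff contribution. The scheme ``uniform boundedness plus convergence on a dense subspace'' requires the operators $T_n:=n(\theta_n(v)-1)$ to be uniformly bounded from $H^\sigma_m$ to $H^{\sigma-1}_m$, and they are not: giving up a derivative buys no decay in $v$, and the inequality $n\,\1_{|v|\geq n}\leq \brv\,\1_{|v|\geq n}$ only yields $n\norm{(\theta_n-1)f}_{H^{\sigma-1}_m}\lesssim \norm{f}_{H^{\sigma-1}_{m+2}}$, i.e.\ boundedness into a space with two extra weight powers that $f\in H^\sigma_m$ does not provide. In fact the $o(1/n)$ rate for this piece fails for general $f\in H^\sigma_m$: take $f(x,v)=\phi(x)\psi(v)$ smooth with $|\psi(v)|^2\brv^{m}\sim |v|^{-d}(\log|v|)^{-2}$ for large $|v|$, so $f\in H^\sigma_m$ for every $\sigma$, yet $n^2\iint_{|v|\geq 2n}|f|^2\brv^m\,\dv\dx\sim n^2(\log n)^{-1}\to\infty$, while the mollification part of $n(\mathcal{R}^nf-f)$ remains $O(1)$; hence $n\norm{\mathcal{R}^nf-f}_{H^{\sigma-1}_m}\to\infty$ for this $f$. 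So no density argument can close this step: the estimate needs extra velocity moments (e.g.\ $f\in H^{\sigma-1}_{m+2}$, or a weaker cutoff) for the $\theta_n$ contribution. This is really a subtlety in the statement of \eqref{ineq:RnCopt} itself—which matters, since the paper invokes it with only $f_0\in H^\sigma_m$ in the proof of Lemma \ref{ACL:ver}—but in any case the ``density-in-$v$'' argument you propose is precisely the step that does not work, and you should either add the moment hypothesis or explain how the application can be rearranged to avoid the sharpened rate.
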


In the previous section, we showed that each of the $f_0^n$ generates a maximal solution $f^n$ of \eqref{SVPFP} in $C([0,\tau_n);H_m^\sigma)\cap L_{\text{loc}}^\infty([0,\tau_n);H_{m'-1}^{\sigma'-1})$, where $\tau_n$ is the maximal time of existence of $f^n$. We now show that the sequence $(f^n)_{n=1}^\infty$ of approximate solutions has a strongly convergent subsequence.

We start by defining the stopping times:
\begin{gather}
	\tau_n^T := \inf\left\{t\geq 0:\, \|f^n(t)\|_{H_m^\sigma} > \|f_0^n\|_{H_{m}^\sigma}+2\right\}\wedge T,\\
	\tau_{n,l}^T = \tau_n^T \wedge\tau_k^T
\end{gather}
The following is similar to \cite{mikulevicius2004stochastic}*{Lemma 37} or \cite{GV14}*{Lemma 7.1}:
\begin{lemma} \label{lemma:ACL}
Let $\tau_n$ be a sequence of stopping times and suppose that a sequence of predictable processes $f^n \in C([0,\tau_n];H_m^\sigma)$ satisfy:
\begin{equation} 
\lim_{n\to\infty} \sup_{l\geq n}\E\sup_{t'\leq \tau_{n,l}^T}\|f^n-f^l\|_{H_{m}^{\sigma}}^2 = 0, \label{ACL:1}
\end{equation}
	\begin{equation}
	\lim_{\epsilon\to 0}\sup_{n\geq 1}\P\left[\sup_{t\leq \tau_n^T \wedge \epsilon}\|f^n\|_{H_m^\sigma} > \|f_0^n\|_{H_m^\sigma}+1\right]=0.\label{ACL:2}
\end{equation}
Then, there exists a stopping time $\tau$ with $\P(0<\tau\leq T)=1,$ a predictable process $f\in C([0,\tau];H_m^\sigma),$ and a subsequence $(f^{n_j})_{j=1}^\infty$ of $(f^n)_{n=1}^\infty$ such that
\begin{equation}
	\sup_{t\leq \tau}\|f^{n_j}(t)-f(t)\|_{H_m^\sigma} \to 0 \text{ as } j\to \infty,\text{ } \P\text{--a.s.}
\end{equation}
and 
\begin{equation}
	\sup_{t\leq \tau}\|f(t)\|_{H_m^\sigma} \leq 2 + \sup_n\|f_0^n\|_{H_m^\sigma},\text{ } \P\text{--a.s.}
\end{equation}
\end{lemma}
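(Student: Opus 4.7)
\emph{Plan.} The strategy is to combine the in-expectation Cauchy control on random intervals provided by \eqref{ACL:1} with the uniform-in-$n$ probabilistic lower bound on the $\tau_n^T$ provided by \eqref{ACL:2}. The first hypothesis furnishes the convergence of a subsequence on the random intervals $[0,\tau_{n_j,n_{j+1}}^T]$; the second ensures that these random intervals do not collapse to zero. Together they produce almost sure convergence on a positive random interval, whose endpoint we promote to the stopping time $\tau$.

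Concretely, I would first apply Markov's inequality to \eqref{ACL:1} and extract inductively a subsequence $(n_j)$ satisfying
\[
	\EE\,\sup_{t\leq \tau_{n_j,n_{j+1}}^T}\|f^{n_j}(t)-f^{n_{j+1}}(t)\|_{H_m^\sigma}^2\leq 4^{-j},
\]
so that Borel--Cantelli produces an almost surely finite $J(\omega)$ with
$\sup_{t\leq \tau_{n_j,n_{j+1}}^T}\|f^{n_j}-f^{n_{j+1}}\|_{H_m^\sigma}\leq 2^{-j}$ for all $j\geq J(\omega)$. Second, I would note that since $\{\tau_n^T\leq\epsilon\}\subseteq\{\sup_{t\leq \tau_n^T\wedge\epsilon}\|f^n\|_{H_m^\sigma}>\|f_0^n\|_{H_m^\sigma}+1\}$, hypothesis \eqref{ACL:2} gives $\sup_n\P(\tau_n^T\leq\epsilon)\to 0$ as $\epsilon\to 0$. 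I would then define the candidate stopping time
\[
	\tau:=\liminf_{j\to\infty}\tau_{n_j}^T\wedge T,
\]
which is a stopping time as an $\liminf$ of stopping times. On the event $\{\tau>0\}$, for each deterministic $s<\tau(\omega)$ there is $J_s(\omega)$ with $\tau_{n_j,n_{j+1}}^T>s$ for all $j\geq J_s$, and the telescoping sum $f^{n_{J_s}}+\sum_{j\geq J_s}(f^{n_{j+1}}-f^{n_j})$ converges uniformly on $[0,s]$ to a continuous limit. Letting $s\uparrow \tau$ and extending by continuity defines $f\in C([0,\tau];H_m^\sigma)$; passing to the limit in $\|f^{n_j}(t)\|_{H_m^\sigma}\leq\|f_0^{n_j}\|_{H_m^\sigma}+2$, valid on $[0,\tau_{n_j}^T]$, yields the norm bound $\sup_{t\leq\tau}\|f(t)\|_{H_m^\sigma}\leq 2+\sup_n\|f_0^n\|_{H_m^\sigma}$. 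Measurability and predictability of $f$ follow from that of the $f^{n_j}$.

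\textbf{Main obstacle.} The crux of the argument is establishing $\P(\tau>0)=1$: the marginal bound $\P(\tau_n^T\leq\epsilon)\leq\eta(\epsilon)$ from \eqref{ACL:2} controls each stopping time separately, but a naive union bound on $\P(\exists j:\tau_{n_j}^T<\epsilon)$ diverges, so one cannot directly conclude that $\liminf_j\tau_{n_j}^T$ is bounded below by a deterministic positive constant. The remedy, following \cite{GV14}*{Lemma 7.1} and \cite{mikulevicius2004stochastic}*{Lemma 37}, is to interleave the Cauchy extraction of Step 1 with an Egorov-type selection based on \eqref{ACL:2}: one fixes a small deterministic $\epsilon_0>0$ with $\sup_n\P(\tau_n^T\leq\epsilon_0)$ arbitrarily small, restricts attention to the high-probability set where the stopped processes $f^{n_j}(\cdot\wedge\tau_{n_j}^T)$ are Cauchy in $C([0,\epsilon_0];H_m^\sigma)$, and uses the resulting limit -- together with its strict initial bound $\|f(0)\|_{H_m^\sigma}\leq\sup_n\|f_0^n\|_{H_m^\sigma}<\sup_n\|f_0^n\|_{H_m^\sigma}+2$ -- to redefine $\tau$ as the first exit time of $f$ from the ball of radius $\sup_n\|f_0^n\|_{H_m^\sigma}+2$, which is then automatically positive by continuity. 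Reconciling this exit-time definition with the constraint $\tau\leq\tau_{n_j}^T$ (needed so that $f^{n_j}$ is genuinely defined on $[0,\tau]$) is the delicate bookkeeping step.
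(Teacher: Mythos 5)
The paper itself offers no proof of this lemma: it is stated as the analogue of \cite{mikulevicius2004stochastic}*{Lemma 37} and \cite{GV14}*{Lemma 7.1}, and your outline follows precisely the strategy of those references (Chebyshev plus Borel--Cantelli along a subsequence to exploit \eqref{ACL:1}, the observation that \eqref{ACL:2} forces $\sup_n \P(\tau_n^T\le\epsilon)\to 0$ as $\epsilon \to 0$, and a telescoping construction of the limit). The difficulty is that your proposal stops exactly at the step that makes the lemma nontrivial, and you say so yourself. Neither of your two candidate definitions of $\tau$ is validated: for $\tau=\liminf_j\tau_{n_j}^T\wedge T$ you do not establish $\P(\tau>0)=1$ (you correctly note the union bound fails), while for the exit-time definition you do not establish that $\tau$ is dominated by $\tau_{n_j}^T$ for all large $j$, which is needed so that the $f^{n_j}$ are defined on $[0,\tau]$, so that \eqref{ACL:1} controls them up to $\tau$ (including the endpoint, which your ``extend by continuity'' step also requires), and so that the bound $\norm{f^{n_j}}_{H_m^\sigma}\le\norm{f_0^{n_j}}_{H_m^\sigma}+2$ can be passed to the limit. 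Calling this reconciliation ``delicate bookkeeping'' is not a substitute for the argument; as written the proposal is an outline of the cited proof rather than a proof.

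What closes the gap is the quantitative unit of room between the threshold $\norm{f_0^n}_{H_m^\sigma}+1$ in \eqref{ACL:2} and the threshold $\norm{f_0^n}_{H_m^\sigma}+2$ defining $\tau_n^T$, used in a chain induction along the subsequence. Choose $(n_j)$ so that $\sum_j\P\bigl(\sup_{t\le\tau^T_{n_j,n_{j+1}}}\norm{f^{n_j}-f^{n_{j+1}}}_{H_m^\sigma}>2^{-j-2}\bigr)<\infty$, so that almost surely the increments are eventually bounded by $2^{-j-2}$ and in particular have tail sum at most $\tfrac14$. Fix $\epsilon<T$ and an index $k$ beyond the Borel--Cantelli threshold, and work on the event where in addition $\sup_{t\le\tau^T_{n_k}\wedge\epsilon}\norm{f^{n_k}}_{H_m^\sigma}\le\norm{f_0^{n_k}}_{H_m^\sigma}+1$. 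An induction on $j\ge k$ then gives $\tau^T_{n_j}\ge\tau^T_{n_k}\wedge\epsilon$: at the first hypothetical failure, continuity of the norm at the exit time $t=\tau^T_{n_j}<\tau^T_{n_k}\wedge\epsilon$ yields $\norm{f^{n_j}(t)}_{H_m^\sigma}\ge\norm{f_0^{n_j}}_{H_m^\sigma}+2$, while the telescoped increments give $\norm{f^{n_j}(t)}_{H_m^\sigma}\le\norm{f^{n_k}(t)}_{H_m^\sigma}+\tfrac14\le\norm{f_0^{n_k}}_{H_m^\sigma}+\tfrac54\le\norm{f_0^{n_j}}_{H_m^\sigma}+\tfrac32$ (using the increments at $t=0$ to compare the initial norms), a contradiction. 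Hence on this event every $\tau^T_{n_j}$, $j\ge k$, is bounded below by the almost surely positive time $\tau^T_{n_k}\wedge\epsilon$, the telescoping series converges uniformly on the closed interval $[0,\tau^T_{n_k}\wedge\epsilon]$, and the norm bound follows. One then removes the exceptional sets, whose probability is at most $\P(J>k)$ plus the quantity in \eqref{ACL:2}, by letting $k\to\infty$ and $\epsilon\to0$, and assembles the stopping time $\tau$ as in \cite{GV14}*{Lemma 7.1}. Some version of this induction, which is exactly the reconciliation you deferred, must be supplied for the proposal to count as a proof.
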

We now verify that the regularized solutions $\set{f^n}_{n \geq 1}$ satisfy the conditions of Lemma \ref{lemma:ACL}.
\begin{lemma}\label{ACL:ver}
	The solutions $(f^n)_{n=1}^\infty$ generated by the regularized data $(f_0^n)_{n=1}^\infty$ satisfy \eqref{ACL:1} and \eqref{ACL:2}.
\end{lemma}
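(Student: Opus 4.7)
The plan is to verify the two hypotheses of Lemma \ref{lemma:ACL} via It\^o energy estimates on the stopped processes. For \eqref{ACL:2}, I apply It\^o's formula to $\|f^n(t)\|_{H_m^\sigma}^2$ exactly as in Lemma \ref{comp:it}. On $[0,\tau_n^T]$, the stopping time forces $\|f^n\|_{H_m^\sigma}\leq \|f_0^n\|_{H_m^\sigma}+2\leq CM$ (using Lemma \ref{lem:RegProp}(i) and the standing bound $\|f_0\|_{H_m^\sigma}\leq M$), so every coefficient in the energy identity is controlled by $C(R,M)$ uniformly in $n$. Integrating over $[0,\tau_n^T\wedge\epsilon]$ produces a drift of size $O(\epsilon)$ and, via BDG, a martingale of expected size $O(\sqrt\epsilon)$, after which Chebyshev's inequality delivers \eqref{ACL:2}.

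For \eqref{ACL:1}, the key idea is to write the equation for $g^{n,l}:=f^n-f^l$ in the \emph{asymmetric} form
\[
dg^{n,l} + v\cdot\nabla_x g^{n,l}\,dt + E[f^l]\cdot\nabla_v g^{n,l}\,dt + (E[f^n]-E[f^l])\cdot\nabla_v f^n\,dt + \nabla_v g^{n,l}\circ dW_t = \nu\cL g^{n,l}\,dt,
\]
which places the \emph{smaller-index} solution $f^n$ in the slot requiring higher regularity. I would run coupled It\^o energy estimates at the $H_m^{\sigma-1}$ and $H_m^\sigma$ levels. In both, the self-transport piece is controlled by Lemma \ref{prod:rules}\eqref{prod:rule:2} and the stochastic corrections cancel exactly as in Lemma \ref{comp:it}. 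The cross term at $H_m^{\sigma-1}$ is bounded via Lemma \ref{prod:rules}\eqref{prod:rule:1} using the uniform stopping-time bound $\|\nabla_v f^n\|_{H_m^{\sigma-1}} \leq \|f^n\|_{H_m^\sigma}\leq CM$, giving $\E\sup\|g^{n,l}\|^2_{H_m^{\sigma-1}} \lesssim \|g^{n,l}(0)\|^2_{H_m^{\sigma-1}}$. At $H_m^\sigma$, \eqref{prod:rule:1} bounds the cross term by $\lesssim\|g^{n,l}\|_{H_m^{\sigma-1}}\|f^n\|_{H_m^{\sigma+1}}\|g^{n,l}\|_{H_m^\sigma}$, where the factor $\|f^n\|_{H_m^{\sigma+1}}$ is controlled by a separate energy estimate in the spirit of Lemma \ref{comp:it}: using $\|\mathcal{R}^n f_0\|_{H_m^{\sigma+1}} \lesssim n$ from Lemma \ref{lem:RegProp}(ii) together with the stopping-time bound on $\|f^n\|_{H_m^\sigma}$, Gr\"onwall yields $\E\sup_{t\leq\tau_n^T}\|f^n\|^2_{H_m^{\sigma+1}}\lesssim n^2$. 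Young's inequality, BDG, and a Gr\"onwall argument at $H_m^\sigma$ then give
\[
\E\sup_{t\leq \tau_{n,l}^T}\|g^{n,l}\|^2_{H_m^\sigma} \lesssim \|g^{n,l}(0)\|^2_{H_m^\sigma} + n^2\|g^{n,l}(0)\|^2_{H_m^{\sigma-1}}.
\]

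The first summand on the right vanishes as $n\to\infty$ by the strong convergence $\mathcal{R}^n f_0\to f_0$ in $H_m^\sigma$ from Lemma \ref{lem:RegProp}(iii). The main obstacle is the potential blow-up of the second summand when $l\gg n$, and its resolution is precisely what the sharp rate \eqref{ineq:RnCopt} is designed to furnish. Writing $\|g^{n,l}(0)\|_{H_m^{\sigma-1}} \leq \|\mathcal{R}^n f_0-f_0\|_{H_m^{\sigma-1}}+\|\mathcal{R}^l f_0-f_0\|_{H_m^{\sigma-1}}$, since $l\geq n$ we bound $n\|\mathcal{R}^l f_0-f_0\|_{H_m^{\sigma-1}} \leq l\|\mathcal{R}^l f_0-f_0\|_{H_m^{\sigma-1}}=o(1)$ by \eqref{ineq:RnCopt}, and similarly for the $n$-term; hence $\sup_{l\geq n}n\|g^{n,l}(0)\|_{H_m^{\sigma-1}}\to 0$, so $\sup_{l\geq n}n^2\|g^{n,l}(0)\|^2_{H_m^{\sigma-1}}\to 0$ as $n\to\infty$, delivering \eqref{ACL:1}. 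The asymmetric rewriting of the difference equation---which couples the non-uniform bound $\|f^n\|_{H_m^{\sigma+1}}\lesssim n$ to the $O(1/n)$ Bona--Smith gain (rather than the weaker $O(1/l)$ gain available on $f^l$)---is the essential ingredient in this closure, paralleling the stochastic Bona--Smith arguments of \cite{mikulevicius2004stochastic}*{Lemma 37} and \cite{GV14}*{Lemma 7.1}.
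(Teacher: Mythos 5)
Your overall strategy coincides with the paper's: the same asymmetric splitting of the nonlinearity (putting $f^n$, the lower index, in the slot that needs $H_m^{\sigma+1}$ regularity), the same use of the stopping times to get the $H_m^{\sigma-1}$ contraction and the bound $\E\sup_{t\leq\tau_n^T}\|f^n\|_{H_m^{\sigma+1}}^2\lesssim n^2$ from Lemma \ref{lem:RegProp}(ii), the same final balancing of $n^2$ against the sharp rate \eqref{ineq:RnCopt}, and essentially the same argument for \eqref{ACL:2}. However, there is a genuine gap at the step where you close the $H_m^\sigma$ estimate: after Young's inequality the drift contains $\|f^n-f^l\|_{H_m^{\sigma-1}}^2\|f^n\|_{H_m^{\sigma+1}}^2$, and to run BDG/Gr\"onwall on $\E\sup\|f^n-f^l\|_{H_m^\sigma}^2$ you must control $\E\int_0^{t\wedge\tau_{n,l}}\|f^n-f^l\|_{H_m^{\sigma-1}}^2\|f^n\|_{H_m^{\sigma+1}}^2\,\ds$. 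Your two auxiliary estimates are bounds on $\E\sup\|f^n-f^l\|_{H_m^{\sigma-1}}^2$ and $\E\sup\|f^n\|_{H_m^{\sigma+1}}^2$ \emph{separately}; since there is no almost-sure bound on $\|f^n\|_{H_m^{\sigma+1}}$ inside the stopping time (the martingale term prevents it), you cannot insert it as a constant into the Gr\"onwall argument, and the expectation of the product is not controlled by the product of expectations. This is exactly the difficulty the paper flags (``we do not a priori know that the term $\|f^n-f^l\|_{H_m^{\sigma-1}}^2\|f^n\|_{H_m^{\sigma+1}}^2$ is in $L_\omega^1L_t^1$'').

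The paper resolves it by applying the stochastic (It\^o) product rule directly to the quantity $\|f^n-f^l\|_{H_m^{\sigma-1}}^2\|f^n\|_{H_m^{\sigma+1}}^2$, estimating the quadratic-covariation correction, and running BDG and Gr\"onwall on this product (see \eqref{ACL:1:stochprod}--\eqref{prod:est:gronwall}), which yields $\E\sup_{t\leq\tau_{n,l}}\bigl(\|f^n-f^l\|_{H_m^{\sigma-1}}^2\|f^n\|_{H_m^{\sigma+1}}^2\bigr)\lesssim \E\bigl(\|f_0^n-f_0^l\|_{H_m^{\sigma-1}}^2\|f_0^n\|_{H_m^{\sigma+1}}^2\bigr)$; this is then fed into \eqref{ACL:1:en}. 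Your argument could alternatively be repaired by Cauchy--Schwarz in $\omega$ together with fourth-moment versions of your two auxiliary estimates (obtainable as in the $p>2$ part of Lemma \ref{comp:it}), but as written this step is missing, and it is precisely the nontrivial point of the lemma.
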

\begin{proof}
	We begin with proving that the Cauchy property \eqref{ACL:1} holds for the sequence $f^n$ of solutions to \eqref{SVPFP:reg} with initial data $f_0^n$.
This is done via an energy estimate with some similarities with the uniqueness and convergence proofs in Section \ref{sec:iii} with one significant difference. 
For $t\leq \tau_{n,l}^T$, we have:
	\begin{align}
		\dee \|\pab(f^n - f^l)\|_{L_m^2}^2 =& -2\left<\pab(v\cdot\nabla_x(f^n-f^l)),\pab (f^n-f^l)\right>_m\dt\notag\\
		&+2\left<\Delta_v(\pab(f^n-f^l)),\pab(f^n-f^l)\right>_m\dt\notag\\
		&+2\left<\pab\Div_v((f^n-f^l)v),\pab(f^n-f^l)\right>_m\dt\notag\\
		&-2\left<\pab( E^n \cdot\nabla_v f^n - E^l\cdot\nabla_vf^l), \pab (f^n-f^l)\right>_m\dt\notag\\
		&-2\left<\pab(\nabla_v(f^n-f^l)\cdot\dee W_t),\pab(f^n-f^l)\right>_m\notag\\
		&+\sum_k\left<\pab(\sev)^2(f^n-f^l),\pab(f^n-f^l)\right>_m\dt\notag\\
		&+\sum_k\|\pab(\sev (f^n-f^l))\|_{L_m^2}^2\dt.
	\end{align}
We will control the above for $|\alpha|+|\beta|=\sigma$. Of course, the linear terms are treated in the same way as in the estimates of Section \ref{sec:iii}. We now explain how the electric field terms are to be estimated. For $t\leq \tau_{n,l}$, we have:
\begin{align}
&\left|\left<\pab(E^n\cdot\nabla_vf^n -E^l\cdot\nabla_vf^l),\pab(f^n-f^l)\right>_m\right| \notag\\
\leq& \left| \left<\pab((E^n-E^l)\cdot\nabla_v f^n), \pab(f^n-f^l)\right>_m  \right|
+ \left|\left<\pab((E^l\cdot\nabla_v(f^n-f^l)),\pab(f^n-f^l)\right>_m\right|\notag\\
\leq&C\|f^n-f^l\|_{H_m^{\sigma-1}}\|f^n\|_{H_m^{\sigma+1}}\|f^n-f^l\|_{H_m^\sigma}+C\|f^l\|_{H_m^{\sigma-1}}\|f^n-f^l\|_{H_m^{\sigma}}^2\notag\\
\leq&C\|f^n-f^l\|_{H_m^\sigma}^2 + \|f^n-f^l\|_{H_m^{\sigma-1}}^2\|f^n\|_{H_m^{\sigma+1}}^2,\label{ACL:1:e:f:e}
\end{align}
where we have used Lemma \ref{prod:rules}.
Combining our estimates from the previous section with \eqref{ACL:1:e:f:e} and the fact that we are taking $t\leq \tau_{j,k}^T$, we have:
\begin{align}
	\dee\left(\|f^n-f^l\|_{H_m^\sigma}^2\right)\leq& C\|f^n-f^l\|_{H_m^\sigma}^2\dt + C\|f^n-f^l\|_{H_m^{\sigma-1}}^2\|f^n\|_{H_m^{\sigma+1}}^2\dt\notag\\
	&-2\sum_{|\alpha|+|\beta|=\sigma}\left<\pab(\nabla_v(f^n-f^l)\cdot \dee W_t),\pab(f^n-f^l)\right>_m\notag\\
	&-2\left<\nabla_v(f^n-f^l)\cdot\dee W_t,f^n-f^l\right>_m.\label{ACL:1:en}
\end{align}
In what follows we denote
\begin{align}
\mathcal{M}_\sigma(f^n - f^l) := &-2\sum_{|\alpha|+|\beta|=\sigma}\left<\pab(\nabla_v(f^n-f^l)\cdot \dee W_t),\pab(f^n-f^l)\right>_m \notag\\ 
	&\quad -2\left<\nabla_v(f^n-f^l)\cdot\dee W_t,f^n-f^l\right>_m. \label{def:mart}
\end{align}
The estimate \eqref{ACL:1:en} would close similarly to before (i.e. by using BDG and Gr\"onwall's inequalities), save for the fact that we do not a priori know that the term $\|f^n-f^l\|_{H_m^{\sigma-1}}^2\|f^n\|_{H_m^{\sigma+1}}^2$ is in $L_\omega^1L_t^1$, so we now estimate it separately\footnote{This loss of probabilistic moments was addressed by the cutoff in the approximation scheme of Section \ref{sec:iii}. } (compare to \cite{GV14}*{Lemma 7.2}).
The stochastic product rule gives:
\begin{align}\dee(\|f^n-f^l\|_{H_m^{\sigma-1}}^2\|f^n\|_{H_m^{\sigma+1}}^2) =& \|f^n\|_{H_m^{\sigma+1}}^2\dee\|f^n-f^l\|_{H_m^{\sigma-1}}  + \|f^n-f^l\|_{H_m^{\sigma-1}}^2\dee\|f^n\|_{H_m^{\sigma+1}}^2\notag\\
	&+(\dee\|f^n-f^l\|_{H_m^{\sigma-1}}^2)(\dee\|f^n\|_{H_m^{\sigma+1}}^2).\label{ACL:1:stochprod}
\end{align}
The correction term in \eqref{ACL:1:stochprod} is:
\begin{equation}
	(\dee\|f^n-f^l\|_{H_m^{\sigma-1}}^2)(\dee\|f^n\|_{H_m^{\sigma+1}}^2) = \sum_k B_{k,\sigma-1}(f^n-f^l)B_{k,\sigma+1}(f^n)
\end{equation}
where:
\begin{equation}
	B_{k,s}(h) := \left<\sev(h),h\right>_m+\sum_{|\alpha|+|\beta|=s}\left<\pab(\sev h),\pab h\right>_m.
\end{equation}
Note that $B_{k,s}(h) \leq C\sigma_k \|e^k\|_{W^{s,\infty}}\|h\|_{H_m^s}^2,$ so:
\begin{equation}
	\dee(\|f^n-f^l\|_{H_m^{\sigma-1}}^2)\dee(\|f^n\|_{H_m^{\sigma-1}}^2) \leq C\|f^n-f^l\|_{H_m^{\sigma-1}}^2\|f^n\|_{H_m^{\sigma+1}}^2\dt.\label{ACL:1:correction}
\end{equation}
For the main terms of \eqref{ACL:1:stochprod} we have similar estimates as before. For the difference $f^n-f^l$ and for $t\leq \tau_{n,l}$ we have (recalling the definition \eqref{def:mart}): 
\begin{align}
		&\dee\|f^n-f^l\|_{H_m^{\sigma-1}}^2\notag\\
	&\leq C\|f^n-f^l\|_{H_m^{\sigma-1}}^2\dt + C\|f^n-f^l\|_{H_m^{\sigma-1}}^2\|f^n\|_{H_m^{\sigma}}^2\dt\notag\\
	&-2\sum_{|\alpha|+|\beta|=\sigma-1}\left<\pab\left[\nabla_v(f^n-f^l)\cdot \dee W_t\right], \pab(f^n-f^l)\right>_m\notag\\
	&-2\left<\nabla_v(f^n-f^l)\cdot\dee W_t, f^n-f^l\right>_m\notag\\
	\leq& C\|f^n-f^l\|_{H_m^{\sigma-1}}^2\dt + \cM_{\sigma-1}(f^n-f^l),\label{ACL:1:diff}
\end{align}
where we used \eqref{ACL:1:e:f:e} for $\sigma-1$ instead of $\sigma$ derivatives and the definition of the stopping time $\tau_n^T$. Similarly, for the norm of $f^n$ and for $t\leq \tau_{n,l}$ we have:
\begin{align}
	\dee\|f^n\|_{H_m^{\sigma+1}}^2 \leq& C\|f^n\|_{H_m^{\sigma+1}}^2\dt\notag\\
&+C\|f^n\|_{H_m^\sigma}\|f^n\|_{H_m^{\sigma+1}}^2\dt\notag\\
&-2\sum_{|\alpha|+|\beta|=\sigma+1}\left<\pab(\nabla_vf^n\cdot\dee W_t),\pab f^n\right>_m\notag\\
&-2\left<\nabla_vf^n\cdot \dee W_t, f^n\right>_m\notag\\
\leq& C\|f^n\|_{H_m^{\sigma+1}}^2\dt+\cM_{\sigma+1}(f^n),\label{ACL:1:fn}
\end{align}
where we again used the definition of the stopping time $\tau_n^T$.
%In \eqref{ACL:1:diff} and \eqref{ACL:1:fn} we have denoted by $\cM_{\sigma-1}(f^n-f^l)$, $\cM_{\sigma+1}(f^n)$ the corresponding martingale terms.
Now, plugging \eqref{ACL:1:correction}, \eqref{ACL:1:diff} \eqref{ACL:1:fn} into \eqref{ACL:1:stochprod}, we obtain:
\begin{align}
	&\dee(\|f^n-f^l\|_{H_m^{\sigma-1}}^2\|f^n\|_{H_m^{\sigma+1}}^2) \leq C\|f^n-f^l\|_{H_m^{\sigma-1}}^2\|f^n\|_{H_m^{\sigma+1}}^2\dt \notag\\
	&+ \cM_{\sigma+1}(f^n)\|f^n-f^l\|_{H_m^{\sigma-1}}^2 + \cM_{\sigma-1}(f^n-f^l)\|f^n\|_{H_m^{\sigma+1}}^2, \label{ACL:1:prod:est}
\end{align}
Integrating \eqref{ACL:1:prod:est} in time and using the BDG inequality, we obtain:
\begin{align*}
	\E\sup_{t'\leq t\wedge \tau_{n,l}}\left( \|f^n-f^l\|_{H_m^{\sigma-1}}^2\|f^n\|_{H_m^{\sigma+1}}^2\right) \leq& \E\left(\|f_0^n-f_0^l\|_{H_m^{\sigma-1}}^2\|f_0^n\|_{H_m^{\sigma+1}}^2\right) \\
	&+C \E\int_0^{t}\sup_{t'\leq s\wedge\tau_{n,l}}\|f^n-f^l\|_{H_m^{\sigma-1}}^2\|f^n\|_{H_m^{\sigma+1}}^2\ds \\
	&+C\E\left(\int_0^t\|f^n-f^l\|_{H_m^{\sigma-1}}^4\|f^n\|_{H_m^{\sigma+1}}^4\ds\right)^{1/2},
\end{align*}
so after rearranging and using Gr\"onwall's inequality as done previously in e.g. the proof of Lemma \ref{comp:it}, we get:
\begin{equation}
	\E\sup_{t'\leq t\wedge\tau_{n,l}}\left(\|f^n-f^l\|_{H_m^{\sigma-1}}^2\|f^n\|_{H_m^{\sigma+1}}^2\right) \leq C\E\left(\|f_0^n-f_0^l\|_{H_m^{\sigma-1}}^2\|f_0^n\|_{H_m^{\sigma+1}}^2\right)\label{prod:est:gronwall}.
\end{equation}
Now returning to \eqref{ACL:1:en}, integrating in time, using the BDG inequality, plugging in \eqref{prod:est:gronwall}, we obtain:
\begin{align*}
	\E\sup_{t'\leq t\wedge\tau_{n,l}}\|f^n-f^l\|_{H_m^{\sigma}}^2 \leq& \E\|f_0^n-f_0^l\|_{H_m^\sigma}^2 \\
	&+ C\int_0^t\E\sup_{t'\leq s\wedge\tau_{n,l}}\|f^n-f^l\|_{H_m^\sigma}^2\ds\\
	&+C\E\left(\int_0^{t\wedge\tau_{n,l}}\|f^n-f^l\|_{H_m^\sigma}^4\ds\right)^{\frac{1}{2}}\\
	&+C\E\left(\|f_0^n-f_0^l\|_{H_m^{\sigma-1}}^2\|f_0^n\|_{H_m^{\sigma+1}}^2\right).
\end{align*}
Therefore we have
\begin{align}
	\E\sup_{t'\leq \tau_{n,l}}\|f^n-f^l\|_{H_m^\sigma}^2 \leq& C\E\|f_0^n-f_0^l\|_{H_m^\sigma}^2\notag \\
	&+ C\E\left(\|f_0^n-f_0^l\|_{H_m^{\sigma-1}}^2\|f_0^n\|_{H_m^{\sigma+1}}^2\right).\label{ACL:1:fin:est}
\end{align}
Then \eqref{ACL:1} follows from \eqref{ACL:1:fin:est} and Lemma \ref{lem:RegProp} (in particular, note \eqref{ineq:RnCopt}). 

Next, we move to the proof of \eqref{ACL:2}. By It\^o's formula, we have:
\begin{align}
\dee\|f^n\|_{H_m^\sigma}^2 \leq& C\|f^n\|_{H_m^\sigma}^2\dt + \|E^n\|_{H^\sigma}^2\dt\notag\\
&-2\sum_{|\alpha|+|\beta|=\sigma}\left<\pab(\nabla_vf^n\cdot\dee W_t),\pab f^n\right>_m\notag\\
&-2\left<\nabla_vf^n\cdot\dee W_t,f^n\right>_m. \label{ACL:2:Ito}
\end{align}
Let us denote
\begin{equation*}
	\cM_{\sigma}(f^n) := 2\sum_{|\alpha|+|\beta|=\sigma}\left<\pab(\nabla_vf^n)\cdot\dee W_t,\pab f^n\right>_m + 2\left<\nabla_vf^n\cdot\dee W_t,f^n\right>_m
\end{equation*} 
so that for $t\leq \tau_n^T\wedge\epsilon$, after integrating in time, \eqref{ACL:2:Ito} gives:
\begin{align}
	\|f^n(t)\|_{H_m^\sigma}^2 \leq \|f_0^n\|_{H_m^\sigma}^2 + C\int_0^t\|f^n(s)\|_{H_m^\sigma}^2\ds + \int_0^t\cM^n(s). 
\end{align}
Therefore, by Chebyshev's inequality for the usual deterministic integral and Doob's ienquality for the martingale, we have 
\begin{align}
	\P(\sup_{t'\leq \tau_n^T\wedge\epsilon}\|f^n\|_{H_m^\sigma}^2 > \|f_0^n\|_{H_m^\sigma}^2 + 1) \leq& \P\left(C\int_0^{\tau_n^T\wedge\epsilon}\|f^n(s)\|_{H_m^\sigma}^2\ds > \frac{1}{2}\right) \notag\\
	+&\P\left(\sup_{t'\leq \tau_n^T \wedge\epsilon}\left|\int_0^{t'}\cM^n(s)\right|>\frac{1}{2}\right)\notag\\
	\leq& C\E\int_0^{\tau_n^T\wedge\epsilon} \|f^n(s)\|_{H_m^\sigma}^2 \ds\notag\\
	&+C\E\int_0^{\tau_n^T\wedge\epsilon}\|f^n(s)\|_{H_m^\sigma}^4\ds\notag\\
	\leq& C\epsilon \E\|f_0^n\|_{H_m^\sigma}^2\notag\\
	\leq& CM\epsilon;
\end{align}
note we also used that $t\leq \tau_n^T $ implies $\|f^n(t)\|_{H_m^\sigma}\leq C$ for a constant $C>0$ that depends on the size of the initial data $f_0$ \emph{uniformly} in $n$, since $\|f_0^n\|_{H_m^\sigma}\leq C\|f_0\|_{H_m^\sigma}$ independently of $n$.
Taking $\epsilon \to 0$, we obtain \eqref{ACL:2}.
\end{proof}

Combining Lemmas \ref{lemma:ACL} and \ref{ACL:ver}, we obtain the existence of a local strong solutions to \eqref{SVPFP} when $\|f_0\|_{H_m^\sigma}\leq M<\infty$ almost surely.
A splitting of the general random initial condition similar to the one in Lemma \ref{lem:locHiRegSVPFP} can now provide a local solution whenever $f_0$ is $\F_0$-measurable with $\|f_0\|_{H_m^\sigma}<\infty$ $\P$--a.s..
Specifically, since $f_0 = \sum_{M=0}^\infty \mathbbm{1}_{M\leq \|f_0\|_{H_m^\sigma}< M+1} f_0,$ each component $f_{0,M} := \mathbbm{1}_{M\leq \|f_0\|_{H_m^\sigma}<M+1}f_0$ generates a local strong solution $(f_M,\tau_M)$ to \eqref{SVPFP} and we re-construct the full $f$ and $\tau$ using
\begin{equation}
	f = \sum_{M=0}^\infty \mathbbm{1}_{M\leq \|f_0\|_{H_m^\sigma}<M+1}f_M,
\end{equation}
and
\begin{equation}
	\tau = \sum_{M=0}^\infty \mathbbm{1}_{M\leq \|f_0\|_{H_m^\sigma}<M+1}\tau_M. 
\end{equation}
This completes the proof of Theorem \ref{thm:main}. 
%we obtain a local strong solution to \eqref{SVPFP} with general initial data.

\section{Hypoelliptic regularization for Vlasov-Poisson-Fokker-Planck}  \label{sec:HR}
Theorem \ref{thm:HR} follows by a priori regularization estimates of \eqref{SVPFP:reg}, specifically, it suffices to prove that solutions to \eqref{SVPFP:reg} are almost-surely $C^\infty_{x,v}$ for $t > 0$.

We first prove that if $f_0 \in H^\sigma_m$, then the solution lies in $f(t) \in H^{\sigma+1}_m$ for $t > 0$ (with size depending only on the $H^\sigma$ norm of the initial condition).
As mentioned in Section \ref{sec:intro}, this hypoelliptic regularization is proved using a time-weighted variation of the classical hypocoercive energy functional for the kinetic Fokker--Planck equation (see \cite{dric2009hypocoercivity}).
For the linear case, a related hypoelliptic regularization estimate can be found in \cite{de2018invariant}.
Taking the standard energy from \cite{dric2009hypocoercivity} and scaling derivatives with the powers of $t$ expected from known hypoelliptic regularization estimates (alternatively, one can deduce them from scaling arguments; see e.g. \cite{bouchut1993existence}) 
we have 
\begin{align*}
	\mathcal{E}_1[t,f] = \norm{f(t)}_{L_m^2}^2 + at\norm{\grad_vf(t)}_{L_m^2}^2 + bt^2\brak{\grad_vf(t),\grad_xf(t)}_m + ct^3\norm{\grad_xf(t)}_{L_m^2}^2. 
\end{align*}
For $H^\sigma_m$ estimates we hence define 
\begin{equation}
	\mathcal{E}_{\sigma}[t,f] := \sum_{0\leq q \leq \sigma} \mathcal{E}_{1,m}[t,\grad_x^{\sigma-q}\grad_v^q f]. 
\end{equation}
The constants are chosen (indepedent of $\sigma$) such that $0 < c < b < a$ and  $b^2 < \sqrt{ac}$ so that 
\begin{align*}
\mathcal{E}_\sigma \approx \norm{f}_{H^\sigma_m} + t \norm{\grad_v f}_{H^\sigma_m}^2  + t^3 \norm{\grad_x f}_{H^\sigma_m}^2.  
\end{align*}
The parameters $a,b,c$ are chosen more specifically to satisfy: for some sufficiently small $\varepsilon \ll 1$ we require
	\begin{equation}
		\begin{cases}
			1 \geq \frac{a}{\varepsilon} \geq \frac{b}{\varepsilon^2} \geq \frac{c}{\varepsilon^3}\\
			a \leq \varepsilon \sqrt{1\cdot b}, \quad b \leq \varepsilon \sqrt{a\cdot c}.
		\end{cases}\label{constants:hypo}
	\end{equation}
We recall the proof that such $a,b,c$ exist in Lemma \ref{constants:hypo:proof} below.
Note that these conditions imply $bt^2 \leq \varepsilon a t + \varepsilon c t^3$, a fact we use repeatedly below. 

Hence, an estimate on $\mathcal{E}_\sigma$ in terms of $\norm{f_0}_{H^\sigma_m}$ implies the desired regularization estimates (along with some more quantitative information that we will not directly use here). 
%Here, the constants $a,b,c$ will be picked so that for short time, we have the following regularization estimate:
The main result of this Section is the following.  
\begin{proposition} \label{prop:reg}
Let $f_0$ be a $\mathcal{F}_0$-measurable initial data and suppose that for all $p < \infty$, $\exists M_p > 0$ such that for some $\sigma>\frac{d}{2}+1$ we have:  
\begin{align}
\EE\norm{f_0}_{H^\sigma_m}^p < M_p.  \label{fin:mom:HR}
\end{align}
	Let $R < \infty$, let $f$ be the unique pathwise solution of \eqref{SVPFP:reg}.

	Then  $\exists T > 0$ depending only on $\sigma$ such that there holds for all $p < \infty$,  
	\begin{align*}
		\E\left(\sup_{0 < t < T} \mathcal{E}_{\sigma}[t,f(t)]\right)^p \leq C(R,p,M_2,M_3,\ldots).
	\end{align*}
	Therefore, almost surely $f(t) \in H^{\sigma+1}_m$ for all $0 < t < T$. 
\end{proposition}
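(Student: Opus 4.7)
My strategy is a stochastic adaptation of Villani's hypocoercive energy method, using the time-weighted functional $\mathcal{E}_\sigma$ with the carefully tuned constants \eqref{constants:hypo} both to a priori-control the solution and to harvest the hypoelliptic gain $\grad_v f(t), t\,\grad_x f(t) \in H^\sigma_m$ for $t>0$. I would apply It\^o's formula to $\mathcal{E}_\sigma[t,f(t)]$, identify the hypocoercive cancellations, absorb the stochastic and nonlinear contributions using the $\theta_R$ cutoff, and arrive at an inequality of Gr\"onwall form $d\mathcal{E}_\sigma \le C_R\,\mathcal{E}_\sigma\,dt + d\mathcal{M}_t - \mathcal{D}_\sigma\,dt$, which I then close by the Burkholder--Davis--Gundy (BDG) inequality and Gr\"onwall, raising to $p$-th powers via the same It\^o expansion used in Lemma \ref{comp:it}. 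The conclusion $f(t) \in H^{\sigma+1}_m$ follows immediately from $\mathcal{E}_\sigma \gtrsim t\|\grad_v f\|^2_{H^\sigma_m} + t^3\|\grad_x f\|^2_{H^\sigma_m}$.

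\textbf{The hypocoercive cancellation.} Fix a pure multi-index $\partial^\alpha = \partial_x^{\sigma-q}\partial_v^q$ with $0 \le q \le \sigma$ and compute $d\mathcal{E}_1[t,\partial^\alpha f]$ block by block. The skew-adjointness of $v\cdot\grad_x$ means transport contributes nothing to $d\|\partial^\alpha f\|^2_{L^2_m}$ or to $d\|\grad_x\partial^\alpha f\|^2_{L^2_m}$. Via the commutator $[\grad_v, v\cdot\grad_x] = \grad_x$, transport contributes $-2\langle \grad_x\partial^\alpha f,\grad_v\partial^\alpha f\rangle_m\,dt$ to $d\|\grad_v\partial^\alpha f\|^2_{L^2_m}$. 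The crucial identity is for the cross term: the two transport terms in $d\langle \grad_v\partial^\alpha f,\grad_x\partial^\alpha f\rangle_m$, namely $-\langle v\cdot\grad_x\grad_v\partial^\alpha f,\grad_x\partial^\alpha f\rangle_m$ and $-\langle\grad_v\partial^\alpha f,v\cdot\grad_x^2\partial^\alpha f\rangle_m$, cancel exactly after integration by parts in $x$ (the $v$-weight does not interfere), leaving the coercive remainder $-\|\grad_x\partial^\alpha f\|^2_{L^2_m}$. Collecting all four blocks, the $dt$-contribution is the sum of the weight-derivatives $a\|\grad_v\partial^\alpha f\|^2 + 2bt\langle\grad_v\partial^\alpha f,\grad_x\partial^\alpha f\rangle + 3ct^2\|\grad_x\partial^\alpha f\|^2$, the transport contributions $-2at\langle\grad_x\partial^\alpha f,\grad_v\partial^\alpha f\rangle - bt^2\|\grad_x\partial^\alpha f\|^2$, and the Fokker--Planck dissipation $-2\nu\|\grad_v\partial^\alpha f\|^2 - 2\nu at\|\grad_v^2\partial^\alpha f\|^2 - \nu b t^2 \langle \grad_v^2,\grad_v\grad_x\rangle - 2\nu ct^3\|\grad_v\grad_x\partial^\alpha f\|^2$. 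The hierarchy \eqref{constants:hypo} with $\varepsilon$ small makes the coefficient $(3c-b)t^2$ of $\|\grad_x\partial^\alpha f\|^2$ strictly negative, and allows absorption of the indefinite cross term $2(b-a)t\langle\grad_v\partial^\alpha f,\grad_x\partial^\alpha f\rangle$ via Cauchy--Schwarz into the two coercive pieces $(2\nu-a)\|\grad_v\partial^\alpha f\|^2$ and $(b-3c)t^2\|\grad_x\partial^\alpha f\|^2$, uniformly in $t \in (0,T]$.

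\textbf{Stochastic and nonlinear terms.} The stochastic transport is treated exactly as in the proof of Lemma \ref{comp:it}: the Stratonovich--It\^o correction $\tfrac12\sum_k(\sev)^2 f$ and the quadratic variation of $d\|\partial^\alpha f\|^2_{L^2_m}$ combine, after integrating by parts the $\sev$ against itself and using the symmetry of $e_k\otimes e_k$, to leave only commutator terms bounded by $C\|\partial^\alpha f\|^2_{L^2_m}$, hence by $C\mathcal{E}_\sigma$; the martingale pieces $\mathcal{M}_{\alpha,\beta}$ are retained and dispatched by BDG at the end. For the nonlinear term $\theta_R(\|f\|_{H^{s_0}_{m_0}}) E\cdot\grad_v f$, the cutoff bounds $\|E\|_{W^{1,\infty}} + \|E\|_{H^\sigma_x} \le C_R$ through Sobolev embedding and \eqref{prod:rule:1}, and Lemma \ref{prod:rules} gives that its contribution to each of $d\|\partial^\alpha f\|^2_{L^2_m}$, $d\langle\grad_v\partial^\alpha f,\grad_x\partial^\alpha f\rangle_m$, and $d\|\grad_x\partial^\alpha f\|^2_{L^2_m}$ is bounded by $C_R$ times the corresponding building block of $\mathcal{E}_\sigma$. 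Summing over $q$ and applying It\^o to $\mathcal{E}_\sigma^p$ with the same BDG-plus-Gr\"onwall routine as in Lemma \ref{comp:it} yields the claimed $L^p_\omega$ bound on $[0,T]$ with $T = T(\sigma)$ chosen so that $e^{C_R T}$ is controllable by the hypocoercive coercivity.

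\textbf{Main obstacle.} The delicate point is that the hypocoercive quadratic form $d\mathcal{E}_\sigma$ must remain sign-favorable after inclusion of the velocity weights $\langle v\rangle^m$, the highest-order commutators in the nonlinear term, and the stochastic second-order correction: the hierarchy \eqref{constants:hypo} is forced precisely to allow the Cauchy--Schwarz absorption of the indefinite cross pieces into $(2\nu-a)\|\grad_v\partial^\alpha f\|^2$ and $(b-3c)t^2\|\grad_x\partial^\alpha f\|^2$ at every order $|\alpha|\le \sigma$. A secondary technical point is that the moment bound on $\mathcal{E}_\sigma^p$ must be obtained by combining the hypoelliptic estimate with the base $H^\sigma_m$ energy estimate already contained in Lemma \ref{comp:it}, since the $\mathcal{E}_\sigma$ calculation by itself only controls $\sup_t \mathcal{E}_\sigma$ in terms of $\|f_0\|_{H^\sigma_m}^p$ after the weights have been activated by $t > 0$.
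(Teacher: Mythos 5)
Your skeleton is the same as the paper's: the time-weighted functional $\mathcal{E}_\sigma$, the exact cancellation in the transport part of the cross term that leaves the coercive piece $-bt^2\norm{\grad_x\pab f}_{L^2_m}^2$, the hierarchy \eqref{constants:hypo} for the Cauchy--Schwarz absorptions, and the BDG-plus-Gr\"onwall closure. The genuine gap is in your treatment of the electric-field term. The cutoff $\theta_R(\norm{f}_{H^{s_0}_{m_0}})$ only controls low norms of the field: $\norm{E}_{W^{1,\infty}}\lesssim \norm{f}_{H^{s_0}_{m_0}}\lesssim R$ on its support, but it does \emph{not} give $\norm{E}_{H^\sigma_x}\le C_R$, since $\norm{E}_{H^\sigma_x}\lesssim\norm{f}_{H^{\sigma-1}_m}$ and the proposition must be applied with $\sigma$ arbitrarily large (Theorem \ref{thm:HR} is obtained by iterating $\sigma\mapsto\sigma+1$), so in general $\sigma-1>s_0$. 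Hence the commutator terms where two or more $x$-derivatives fall on $E$ are superquadratic in norms of $f$ that the cutoff does not see, and your asserted bound ``$C_R$ times the corresponding building block of $\mathcal{E}_\sigma$'' --- and with it the closed inequality $\dee\mathcal{E}_\sigma\le C_R\mathcal{E}_\sigma\dt+\dee\mathcal{M}_t-\cd_\sigma\dt$ --- is false as stated. The paper instead interpolates (Gagliardo--Nirenberg in $x$) and applies Young's inequality so that these contributions come out as $C\mathcal{E}_\sigma+C\norm{f}_{H^\sigma_m}^{p}$ for some $p>2$, and this superquadratic forcing is precisely why the hypothesis \eqref{fin:mom:HR} demands \emph{all} moments of $\norm{f_0}_{H^\sigma_m}$: it is controlled a priori in $L^1_\omega L^1_t$ via moment bounds for \eqref{SVPFP:reg} (in the style of Lemma \ref{comp:epsilon}), not absorbed by Gr\"onwall inside $\mathcal{E}_\sigma$.

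A second, related omission: in the cross block $bt^2\brak{\grad_v\pab f,\grad_x\pab f}_m$ the nonlinear contribution $\brak{\pvab(E\cdot\grad_v f),\pxab f}_m$ contains the top-order piece with $\sigma+2$ derivatives on $f$ (two extra $v$-derivatives), which is not controlled by $\mathcal{E}_\sigma$ at all and admits no symmetric cancellation of the type used in \eqref{prod:rule:2}, because the two slots carry different derivatives. It must be absorbed into the dissipation, $bt^2\mathcal{N}_{c,\alpha,\beta,j}\le \tfrac{\varepsilon}{10}\cd_\sigma+C\mathcal{E}_\sigma+C\norm{f}_{H^\sigma_m}^p$, using $\norm{E}_{L^\infty_x}\le C_R$ and the hierarchy ($b\le\varepsilon a$, $b\le\varepsilon\sqrt{ac}$) to pay the $t$-weights; this is where the terms $at\norm{\grad_v^2\pab f}_{L^2_m}^2$ and $ct^3\norm{\grad_v\grad_x\pab f}_{L^2_m}^2$ in $\cd_\sigma$ are genuinely needed, and it is missing from your sketch. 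The same condition $b\le\varepsilon\sqrt{ac}$ is also used when you ``dispatch by BDG'': the quadratic variation of the time-weighted cross martingale produces a term of size $\varepsilon\sqrt{\mathcal{E}_\sigma\cd_\sigma}$ whose square must be reabsorbed into the dissipation on the left-hand side. With these repairs your argument coincides with the paper's proof.
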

Proposition \ref{prop:reg} implies a corresponding instantaneous regularization for the maximal pathwise solution of \eqref{SVPFP}.
Once the above proposition is proved, one may simply iterate it, observing that for all $\delta > 0$, $f(\delta)$ is an $\mathcal{F}_\delta$-measurable random variable with
\begin{align*}
	\EE \norm{f(\delta)}_{H^{\sigma+1}_m}^p < \infty. 
\end{align*}
Therefore, we may apply Proposition \ref{prop:reg} to the initial data $f(\delta)$ with $\sigma \mapsto \sigma+1$.
Finally, similar to the proof of Lemma \ref{lem:locHiRegSVPFP}, a simple cutting procedure can be applied to remove the moment constraint on the initial condition. 
Hence, to prove Theorem \ref{thm:HR}, it suffices to prove Proposition \ref{prop:reg}. 
\begin{proof}
%As we have done before, we first consider the case that $f_0$ is $\mathbb P$-a.s. bounded in $H^\sigma_m$, i.e. for assume that for some $M < \infty$ we have
%\begin{align*}
%\norm{f_0}_{H^\sigma_m} < M. 
%\end{align*}
%Once this case is established for all $M < \infty$, for general initial data the solution is constructed via the standard procedure 
%\begin{align*}
%f(t) = \sum_{M \geq 1} \mathbf{1}_{M-1 \leq \norm{f_0}_{H^\sigma_m} < M}(\omega) f(t,\omega). 
%\end{align*}
For notational simplicity, we will take $\nu = 1$ but the same arguments (up to a suitable rescaling of the coefficients $a,b,c$) apply for any $\nu>0$.
Define the dissipation rate:
	\begin{align} 
		\cd_{\sigma}(t,f(t)) & := \sum_{\abs{\alpha} + \abs{\beta} \leq \sigma} \Bigl(\norm{\grad_v\pab f(t)}_{L_m^2}^2 + at\norm{\grad_v^2 \pab f(t)}_{L_m^2}^2 \notag \\
        & \quad  + \frac{b}{2}t^2\norm{\grad_x \pab f(t)}_{L_m^2}^2 + ct^3\norm{\grad_v\grad_x \pab f(t)}_{L_m^2}^2\Bigr), 
	\end{align}
which we show arises from $\dee \mathcal{E}_\sigma$. Note that this is almost the same as the contribution from $\dee \mathcal{E}_\sigma$ that arises when the time derivative lands on the powers of $t$ in front of the higher-order terms.
In order to reduce some of the notation in the ensuing calculation, we use $B(h,g)$ to denote an $L^2_m$-bounded bilinear form, the exact form of which is irrelevant, i.e, a form which is linear in both arguments and such that for any $h,g \in L^2_m$ 
\begin{align*}
\norm{B(h,g)}_{L^2_m} \lesssim \norm{h}_{L^2_m}\norm{g}_{L^2_m}. 
\end{align*}
The main step of the proof is to calculate the following 
\begin{align*}
\dee \mathcal{E}_\sigma & = \sum_{\abs{\alpha} + \abs{\beta} \leq \sigma} \dee \norm{\pab f(t)}_{L_m^2}^2 + \dee \left( at\norm{\grad_v \pab f(t)}_{L_m^2}^2 \right) \\ 
\\ & \quad + \sum_{\abs{\alpha} + \abs{\beta} \leq \sigma} \dee \left( bt^2\brak{\grad_v \pab f(t),\grad_x \pab f(t)}_m\right) + \dee \left(ct^3\norm{\grad_x \pab f(t)}_{L_m^2}^2 \right).  
\end{align*}
As in the proof of the various bounds in Sections \ref{sec:iii}--\ref{sec:iv}, we have:
	\begin{align}	\dee\norm{\pab f}_{L_{m}^2}^2 =& -2\brak{\pab(v\cdot\grad_x f),\pab f}_m\dt\notag\\
		&+2\brak{\Delta_v\pab f, \pab f}_m\dt\notag\\
		&+2\brak{\pab\Div_v(f v), \pab f}_m\dt\notag\\
		&-2\brak{\pab(\theta_{R}(\norm{f}_{H_{m_0}^{s_0}})E\cdot\grad_v f), \pab f}_m\dt\notag\\
		&-2\brak{\pab(\grad_v f\cdot\dee W_t),\pab f}_m\notag\\
		&+\sum_k \brak{\pab((\sev)^2 f),\pab f}_m\dt\notag\\
		&-\sum_k \norm{\pab(\sev f)}_{L_m^2}^2\dt.\label{hypo:pab:est}
	\end{align}
This formula, and its straightforward variations, are then used to expand most of the terms of $\dee \mathcal{E}_\sigma$, with the exception of the cross-terms (i.e. those multiplied by $b$).
For the cross-terms we instead have 
	\begin{align}
		&\dee \brak{\pvab f, \pxab f}_m =\notag\\
		 &-\left(\brak{\pvab(v\cdot\grad_xf),\pxab f}_m +\brak{\pvab f,\pxab(v\cdot\grad_xf)}_m\right)\dt \notag\\
		&-\left( \brak{\theta_R \pvab(E\cdot\grad_vf),\pxab f}_m+\brak{\pvab f,\theta_R\pxab(E\cdot\grad_vf)}_m \right)\dt\notag\\
		&+\left(\brak{\pvab(\Delta_vf),\pxab f}_m + \brak{\pvab f, \pxab (\Delta_vf)}_m\right)\dt \notag\\
		&+\left(\brak{\pvab(\Div_v(fv)),\pxab f}_m + \brak{\pvab f,\pxab(\Div_v(fv))}_m\right)\dt\notag\\
		&-\brak{\pvab(\grad_vf\cdot\dee W_t),\pxab f}_m - \brak{\pvab f,\pxab(\grad_vf\cdot\dee W_t)}_m \notag\\
		&+ \frac{1}{2}\sum_k \brak{\pvab(\sev)^2f,\pxab f}_m\dt\notag\\
		&+\frac{1}{2}\sum_k \brak{\pvab f, \pxab(\sev)^2f }_m\dt\notag\\
		&+\sum\brak{\pvab(\sev f),\pxab(\sev f)}_m\dt\notag\\
		:=& \cT_{c,\alpha,\beta,j}(f)+\mathcal{N}_{c,\alpha,\beta,j}(f)+\cD_{c,\alpha,\beta,j}(f)+\F_{c,\alpha,\beta,j}(f)+\cM_{c,\alpha,\beta,j}(f)+\cC_{c,\alpha,\beta,j}(f),\label{hypo:crossterm}
	\end{align}
	where we abbreviated $\theta_R := \theta_R(\norm{f}_{H_{m_0}^{s_0}})$ and $\cT_{c,\alpha,\beta,j},\mathcal{N}_{c,\alpha,\beta,j},\cD_{c,\alpha,\beta,j}, \F_{c,\alpha,\beta,j},\cM_{c,\alpha,\beta,j},\cC_{c,\alpha,\beta,j}$ indicate transport, nonlinear (electric field), dissipation, friction, martingale, and (It\^o) correction contributions (which incorporate the last three terms) to the cross-terms, respectively.

\subsection*{Linear, deterministic contributions:} 
First, we collect the contributions of the linear terms, namely those that arise from the $v \cdot \grad_x$ free transport and the Fokker--Planck operator.
The main effect of these terms is to introduce the dissipation $\cd_{\sigma}$.
The calculation is standard (see \cite{dric2009hypocoercivity}) and so we omit most of the details.
We define the total contribution of the linear terms of the SPDE for $f$ to $\dee\mathcal{E}$ by:
\begin{align}
Lin(\alpha,\beta) & := a \norm{\grad_v \pab f}_m^2 + 2 b t \brak{\grad_v \pab f, \grad_x \pab f}_m + 3c t^2 \norm{\grad_x \pab f}_{L^2_m}^2 \notag \\ 
& \quad -2\brak{\pab(v\cdot\grad_x f),\pab f}_m + 2\brak{\Delta_v\pab f, \pab f}_m \notag\\
& \quad + 2\brak{\pab\Div_v(f v), \pab f}_m \notag \\
& \quad - 2at \brak{\grad_v \pab(v\cdot\grad_x f),\grad_v \pab f}_m + 2at \brak{\grad_v \pab \Delta_v f, \grad_v \pab f}_m \notag \\ 
& \quad + 2at \brak{\pab\grad_v \Div_v(f v), \grad_v \pab f}_m \notag \\
& \quad - bt^2\left( \sum_j \cT_{c,\alpha,\beta,j} + \cD_{c,\alpha,\beta,j} + \F_{c,\alpha,\beta,j} \right) \notag \\   
& \quad - 2ct^3 \brak{\grad_x \pab(v\cdot\grad_x f),\grad_x \pab f}_m\notag\\
& \quad + 2ct^3\brak{\Delta\pxab f,\pxab f}_m + 2ct^3\brak{\pxab(\Div_v(fv)),\pxab f} \label{def:Lin} 
\end{align}
For the rest of this proof, we denote $p:=|\alpha|, q := |\beta|$.
By integration by parts we may write 
	\begin{align}
		\cT_{c,\alpha,\beta,j}(f)
		=& -\brak{\partial_x^{\alpha+e_j}\partial_v^\beta (v\cdot\grad_xf), \partial_x^{\alpha}\partial_v^{\beta+e_j}f }_m\dt - \brak{\partial_x^{\alpha+e_j}\partial_v^\beta f, \partial_x^\alpha\partial_v^{\beta+e_j}(v\cdot\grad_xf)}_m\dt \notag\\
		=& -\norm{\partial_x^{\alpha+e_j}\partial_v^\beta f}_{L_m^2}^2\dt \notag\\
		&-\brak{v\cdot\grad_x(\partial_x^{\alpha+e_j}\partial_v^\beta f), \partial_x^\alpha\partial_v^{\beta+e_j}f}_m\dt - \brak{\partial_x^{\alpha+e_j}\partial_v^\beta f, v\cdot\grad_x(\partial_x^{\alpha}\partial_v^{\beta+e_j}f)}_m\notag\\
		&+\sum_{\substack{\beta' < \beta \\ |\beta'|=1 }}\nchoosek{\beta}{\beta'}\left( \brak{\partial_x^{\alpha+e_j+\beta'}\partial_v^{\beta-\beta'}f,\partial_x^{\alpha}\partial_v^{\beta+e_j}f}_m+\brak{\partial_x^{\alpha+e_j}\partial_v^\beta f , \partial_x^{\alpha+\beta'}\partial_v^{\beta-\beta'+e_j}f}_m\right)\dt\notag\\
		=&\begin{cases}
			-\norm{\partial_x^{\alpha + e_j}f}_{L_m^2}^2\dt, &  \text{ if } \beta = 0\\
			-\norm{\pxab f}_{L_m^2}^2\dt  +\sum_{q'=q-1}^q B(\grad_x^{p+1}\grad_v^{q'}f,\grad_x^{p+1}\grad_v^qf)\dt & \text{ if } |\beta| > 0,
		\end{cases} \label{cross:t}	\end{align}
where recall from above that $B(\cdot,\cdot)$ denotes an $L^2_m$ bounded bilinear form, the exact form of which is not relevant. 
The dissipation term is more easily treated, yielding 
\begin{align}
		\cD_{c,\alpha,\beta}(f) =& \brak{\pvab \Delta_v f , \pxab f }_m\dt + \brak{\pvab f, \pxab \Delta_v f }_m\dt\notag\\
		=&-2\brak{\pvab \grad_vf,\pxab \grad_vf}_m\dt\notag\\
		&+ B(\grad_x^{p}\grad_v^{q+2}f,\grad_x^{p+1}\grad_v^qf)\dt + B(\grad_x^p\grad_v^{q+1}f,\grad_x^{p+1}\grad_v^{q+1}f)\dt. \label{cross:d}
	\end{align}
The friction term can be re-arranged as follows 
\begin{align}
		\F_{c,\alpha,\beta}(f) =& \brak{\pvab(\Div_v(fv)), \pxab f}_m\dt + \brak{\pvab f, \pxab(\Div_v(fv))}_m\dt \notag\\
		=& \brak{\Div_v(\pxab f v),\pvab f}_m\dt + \brak{\pxab f, \Div_v(\pvab f v)}_m\dt\notag\\
		&+\sum_{\substack{\beta'<\beta\\ |\beta'|=1}}\brak{\partial_x^{\alpha+e_j}\partial_v^{\beta+\beta'}f, \partial_x^{\alpha}\partial_v^{\beta+e_j}f}_m + \sum_{\substack{\beta'<\beta\\ |\beta'|=1}} \brak{ \pxab f, \partial_x^{\alpha}\partial_v^{\beta+\beta'+e_j}f }_m\dt\notag\\
		=&2\brak{\Div_v(\pxab f \pvab f v), 1}_m\dt - \brak{v\cdot\grad_v(\pxab f\pvab f),1}_m\dt \notag\\
		+&\sum_{\substack{\beta'<\beta\\ |\beta'|=1}}\brak{\partial_x^{\alpha+e_j}\partial_v^{\beta+\beta'}f, \partial_x^{\alpha}\partial_v^{\beta+e_j}f}_m + \sum_{\substack{\beta'<\beta\\ |\beta'|=1}} \brak{ \pxab f, \partial_x^{\alpha}\partial_v^{\beta+\beta'+e_j}f }_m\dt.\label{cross:f}
	\end{align}
The fundamental structure of the hypocoercive norm $\mathcal{E}$ is that the $\cT$ term gives rise to the $\grad_x$ dissipation term that would otherwise be missing from the dissipation of a kinetic equation.
That is, from \eqref{cross:t}, we obtain:
	\begin{align}
		bt^2\cT_{c,\alpha,\beta,j}(f) +bt^2\norm{\pxab f}_{L_m^2}^2\dt \leq& bt^2B\left(\grad_v(\grad_x^{p+1}\grad_v^{q-1}f),\grad_v(\grad_x^{p+1}\grad_v^{q-1}f)\right)\dt\notag\\
		&+bt^2B\left(\grad_x^{p+1}\grad_v^{q-1}f,\grad_v(\grad_x^{p+1}\grad_v^{q-1}f)\right)\dt\notag\\
		\lesssim& \frac{b}{a}t\cd_\sigma\dt + bt^2\mathcal{E}_\sigma\notag\\
		\lesssim& \varepsilon t\cd_\sigma\dt + bt^2 \mathcal{E}_\sigma\dt,\label{hypo:cross:t:final}
        \end{align}
and similarly, from \eqref{cross:d} and \eqref{cross:f}: %\eqref{cross:c} give:
\begin{align}
	bt^2\cD_{c,\alpha,\beta,j} + bt^2\F_{c,\alpha,\beta,j}  \lesssim& \left(\frac{b}{a} + \frac{ba}{c}\right) t\cd_\sigma\dt + \mathcal{E}_\sigma\dt\notag\\
	\lesssim& t\cd_\sigma \dt + \mathcal{E}_\sigma \dt .\label{hypo:cross:dfc:final}
\end{align}
Putting together the negative definite terms that arise from $\mathcal{T}_{c,\alpha,\beta,j}$ and those in \eqref{def:Lin}, we obtain for short $t$ and for some $C>0$:
\begin{align*}
Lin(\alpha,\beta) \leq -\left(2 - Ct \right) \cd_\sigma + C \mathcal{E}_\sigma.
\end{align*} 
In fact this is somewhat sub-optimal, as the second term on the right-hand side above can be taken in weaker norms.
However, such refinements will be irrelevant here as we are only interested in short time regularization.

\subsection*{Nonlinear contributions:} 
Next, we collect the contributions of the nonlinear electric field.
Namely,
\begin{align*}
NL(\alpha,\beta) & := -2\brak{\pab(\theta_{R}(\norm{f}_{H_{m_0}^{s_0}})E\cdot\grad_v f), \pab f}_m \notag\\
& \quad -2 at \brak{\pab \grad_v (\theta_{R}(\norm{f}_{H_{m_0}^{s_0}})E\cdot\grad_v f), \pab \grad_v f}_m \notag\\
& \quad - b t^2 \left( \brak{\theta_R \pvab(E\cdot\grad_vf),\pxab f}_m+\brak{\pvab f,\theta_R\pxab(E\cdot\grad_vf)}_m \right) \notag \\
& \quad -2 ct^3 \brak{\pab \grad_x (\theta_{R}(\norm{f}_{H_{m_0}^{s_0}})E\cdot\grad_v f), \pab \grad_x f}_m. 
\end{align*}
We first analyze the electric field's contribution to \eqref{hypo:pab:est}, similarly to the various nonlinear estimates of Section \ref{sec:iii}:
	\begin{align}
		&\left|\brak{\pab(E\cdot\grad_vf),\pab f}_m\right| \notag\\
		&\lesssim \sum_{\substack{\gamma<\alpha \\ |\alpha-\gamma|\geq2}}\abs{\brak{\partial_x^{\alpha-\gamma} E\cdot\grad_v\partial_v^\beta\partial_x^\gamma f,\pab f}_m}\dt + \norm{E}_{W^{1,\infty}}\norm{f}_{H_m^{\sigma}}^2\notag\\
		&\leq\sum_{\substack{\gamma<\alpha\\ |\alpha-\gamma|\geq 2}}\norm{\grad_x^{|\alpha-\gamma|}E}_{L_x^{2\frac{\sigma-1}{|\alpha-\gamma|-1}}}\norm{\grad_v\partial_v^\beta\partial_x^\gamma f}_{L_{v,m}^2L_x^{2\frac{\sigma-1}{|\beta|+|\gamma|}}}\norm{\pab f}_{L_m^2}\notag\\
		&+\norm{E}_{W^{1,\infty}}\norm{f}_{H_m^{\sigma}}^2\notag\\
		\lesssim& \sum_{\substack{\gamma<\alpha\\ |\alpha-\gamma|\geq 2}}\norm{E}_{W^{1,\infty}}^{1-\theta}\norm{E}_{H^{\sigma}}^\theta \norm{\grad_v\partial_v^\beta f}_{L_m^2}^{1-\eta}\norm{f}_{H_m^{\sigma}}^{1+\eta} + \norm{E}_{W^{1,\infty}}\norm{f}_{H_m^{\sigma}}^2,\label{hypo:pab:ef}
	\end{align}
	where for each $\alpha,\beta,\gamma,$ the interpolation index $\eta$ is given as:
	\begin{align}
		\eta =& \frac{|\gamma|}{|\alpha|-1} + \frac{d}{|\alpha|-1}\left(\frac{1}{2}-\frac{|\beta|+|\gamma|}{2(\sigma-1)}\right)\notag\\
		=&\frac{|\gamma|}{|\alpha|-1} +\frac{d}{|\alpha|-1}\cdot\frac{|\alpha-\gamma|-1}{2(\sigma-1)},
	\end{align}
	Note that in the above, $\eta<1$ since $\frac{d}{2(\sigma-1)}<1$. Therefore, 
	\begin{align}
%		\|E\|_{W^{1,\infty}}^{1-\theta}\|E\|_{H^{\sigma}}^\theta \|\grad_v\partial_v^\beta f\|_{L_m^2}^{1-\eta}\|f\|_{H_m^{\sigma}}^{1+\eta} \lesssim \left(\|E\|_{W^{1,\infty}}^{1-\theta}\|E\|_{H^{\sigma}}^\theta\|\grad_v\partial_v^\beta f\|_{L_m^2}^{1-\eta}\right)^{\frac{2}{1-\eta}} + \|f\|_{H_m^{\sigma}}^2.
\norm{E}_{W^{1,\infty}}^{1-\theta}\norm{E}_{H^{\sigma}}^\theta \norm{\grad_v\partial_v^\beta f}_{L_m^2}^{1-\eta}\norm{f}_{H_m^{\sigma}}^{1+\eta} & \lesssim \norm{ f}_{H^\sigma_m}^2 + \left(\norm{E}_{W^{1,\infty}}^{1-\theta}\norm{E}_{H^{\sigma}}^\theta\right)^{\frac{2}{1-\eta}} \norm{f}_{H_m^{\sigma-1}}^2 \notag \\
& \lesssim \norm{ f}_{H^\sigma_m}^2 + \norm{f}_{H_m^{\sigma-1}}^{\frac{4-2\eta}{1-\eta}}. 
%\left(\|E\|_{W^{1,\infty}}^{1-\theta}\|E\|_{H^{\sigma}}^\theta
%\|\grad_v\partial_v^\beta f\|_{L_m^2}^{1-\eta}\right)^{\frac{2}{1-\eta}} + \|f\|_{H_m^{\sigma}}^2.
	\end{align}
Note that in this term, it was not necessary to make use of the dissipation, as the first term in the final inequality above is controlled by at most $H^\sigma_m$ and the second term, which is derived from the factor $\norm{E}_{W^{1,\eta}}^{1-\theta}\norm{E}_{H^\sigma}^\theta\|\grad_v\partial_v^\beta f\|_{L_m^2}^{1-\eta}$, contains at most $\sigma-1$ derivatives (since $\abs{\beta} \leq \sigma-2$ whenever $|\alpha-\gamma|\geq 2$). 
%The other terms involving $E$ that arise from the terms involving $a$ and $c$ are treated in the same manner and contain additional derivatives. 

In a similar manner, the corresponding \say{second term} in the electric field contributions to the $\pxab f$ and $\pvab f$ terms of the energy will contain at most $\sigma$ derivatives. This means that all in all, for short $t$ we can bound the electric field contributions to \eqref{hypo:pab:est}, as well as those to the higher order terms in the definition of $\mathcal{E}_\sigma$, from \eqref{hypo:pab:ef}:
\begin{align}
	\abs{\brak{\pab(E\cdot\grad_vf),\pab f}_m}\lesssim \norm{f}_{H_m^{\sigma-1}}^p + \mathcal{E}_\sigma \label{hypo:n:final:1}\\
	at\abs{\brak{\pvab(E\cdot\grad_vf),\pvab f}_m} \lesssim \norm{f}_{H_m^\sigma}^p + \mathcal{E}_\sigma\label{hypo:n:final:2}\\
	ct^3\abs{\brak{\pxab(E\cdot\grad_vf),\pxab f}_m}\lesssim \norm{f}_{H_m^\sigma}^p + \mathcal{E}_\sigma, \label{hypo:n:final:3}
\end{align}
for some $p > 2$ fixed. Note that the high power of $\norm{f}_{H^\sigma_m}$ is a priori controlled (in $L_\omega^1L_{t,loc}^\infty$) by the finite $p$-th moment assumptions \eqref{fin:mom:HR}.

We now move to estimating the electric field contribution to the cross term.
First, we integrate by parts for convenience, in order to \say{symmetrize} $\mathcal{N}_{c,\alpha,\beta,j}$ up to a lower order term:
	\begin{align}
		\mathcal{N}_{c,\alpha,\beta,j} =& \brak{\pxab(E\cdot\grad_vf),\pvab f}_m + \brak{\pxab f, \pvab(E\cdot\grad_vf)}_m\notag\\
		=& -\brak{\pab(E\cdot\grad_vf),\partial_x^{\alpha+e_j}\partial_v^{\beta+e_j}f}_m + \brak{\pxab f,\pvab(E\cdot\grad_vf)}_m\notag\\
		=&\quad 2\brak{\pxab f, \pvab (E\cdot\grad_vf)}_m \notag\\
		&+ \iint \pab(E\cdot\grad_vf)\pxab f \partial_{v^j}(\brv^m)\dv\dx.\label{cross:n:1}
	\end{align}
	We analyze the first term in the final equality above:
	\begin{align}
		&\abs{\brak{\pvab(E\cdot\grad_vf)\pxab f}_m} \notag\\
		\lesssim& \sum_{\gamma\leq \alpha} \abs{\brak{\partial_x^{\alpha-\gamma}E\cdot\grad_v\partial_v^{\beta+e_j}\partial_x^{\gamma}f,\pxab f}_m}\notag\\
		&\leq \norm{E}_{L_x^\infty}\norm{\grad_v^{q+2}\grad_x^pf}_{L_m^2}\norm{\grad_v^q\grad_x^{p+1}f}_{L_m^2}\notag\\
		&+\norm{\grad_xE}_{L_x^\infty}\norm{\grad_{v}^{q+2}\grad_x^{p-1}f}_{L_m^2}\norm{\grad_x^{p+1}\grad_v^qf}_{L_m^2}\notag\\
		&+\sum_{\substack{\gamma<\alpha\\ |\alpha-\gamma|\geq 2}}\norm{\grad_x^{|\alpha-\gamma|}E}_{L_x^{2\frac{\sigma}{|\alpha-\gamma|-1}}}\norm{\grad_v^{q+2}\grad_x^{|\gamma|}f}_{L_{v,m}^2L_x^{2\frac{\sigma}{|\beta|+|\gamma|}}}\norm{\grad_v^q\grad_x^{p+1}f}_{L_m^2}.\label{cross:n:2}
	\end{align}
	To estimate each term in the summation above, we again interpolate as in the proof of Lemma \ref{comp:epsilon};
	\begin{align}
		\norm{\grad_x^{|\alpha-\gamma|}E}_{L_x^{2\frac{\sigma}{|\alpha-\gamma|-1}}} \lesssim& \norm{\grad_xE}_{L_x^\infty}^{1-\frac{|\alpha-\gamma|-1}{\sigma}}\norm{\grad_x^{\sigma+1}E}_{L_x^2}^{\frac{|\alpha-\gamma|-1}{\sigma}},\label{cross:n:i:1}
	\end{align}
	\begin{align}
		\norm{\grad_v^{q+2}\grad_x^{|\gamma|}f}_{L_{v,m}^2L_x^{2\frac{\sigma}{|\beta|+|\gamma|}}} \lesssim& \norm{\grad_v^{q+2}f}_{L_m^2}^{1-\eta}\norm{\grad_v^{q+2}\grad_x^pf}_{L_m^2}^{\eta},\label{cross:n:i:2}
	\end{align}
	where 
	\begin{align}
		\eta =& \frac{|\gamma|}{|\alpha|} + \frac{d}{|\alpha|}\left(\frac{1}{2}-\frac{|\beta|+|\gamma|}{2\sigma}\right)\notag\\
		=& \frac{|\gamma|}{|\alpha|} + \frac{d}{|\alpha|}\frac{|\alpha-\gamma|}{2\sigma},\notag
	\end{align}
which again satisfies $\eta < 1$ since $\sigma>d/2.$ With these exponents, plugging \eqref{cross:n:i:1}-\eqref{cross:n:i:2} into \eqref{cross:n:2} we get:
\begin{align}
		&\abs{\brak{\pvab(E\cdot\grad_vf),\pxab f}_m}\notag\\
		\lesssim& \norm{E}_{L_x^\infty}\norm{\grad_v^{q+2}\grad_x^pf}_{L_m^2}\norm{\grad_v^q\grad_x^{p+1}f}_{L_m^2} + \norm{\grad_xE}_{L_x^\infty}\norm{\grad_v^{q+2}\grad_x^{p-1}f}_{L_m^2}\norm{\grad_x^{p+1}\grad_v^qf}_{L_m^2}\notag\\
		&+\sum_{\substack{\gamma<\alpha\\ |\alpha-\gamma|\geq 2}} \norm{\grad_xE}_{L_x^\infty}^{\frac{|\beta|+|\gamma|+1}{\sigma}}\norm{\grad_x^{\sigma+1}E}_{L_x^2}^{\frac{|\alpha-\gamma|-1}{\sigma}}\norm{\grad_v^{q+2}f}_{L_m^2}^{1-\eta}\norm{\grad_v^{q+2}\grad_x^pf}_{L_m^2}^\eta \norm{\grad_v^q\grad_x^{p+1} f}_{L_m^2}.\label{cross:n:3}
\end{align}
The lower order term in \eqref{cross:n:1} produces a less significant contribution - as it contains a smaller number of derivatives - and hence we omit the treatment for brevity.
Therefore, since $|\alpha-\gamma|\geq 2$ implies $q+2 \leq \sigma$, we obtain from \eqref{cross:n:3} that $\exists C(\varepsilon) >0$ such that
\begin{align}
		bt^2\mathcal{N}_{c,\alpha,\beta,j} \leq \frac{\varepsilon}{10} \cd_\sigma\dt + C(\varepsilon)\mathcal{E}_\sigma\dt + C(\varepsilon)\norm{f(t)}_{H_m^\sigma}^p\dt.\label{hypo:cross:n:final}
\end{align}
This completes the required estimates on the electric field. 

\subsection*{It\^o corrections:}
Next, let us analyze the contributions of the corrections to the cross term. This equals:
\begin{align}
		\cC_{c,\alpha,\beta,j}(f) =& \frac{1}{2}\sum_k\Bigl(\brak{\pvab(\sev)^2f,\pxab f}_m\notag\\
		&+\brak{\pvab f,\pxab(\sev)^2f}_m\Bigr)\dt\notag\\
		&+\sum_k\brak{\pvab(\sev f), \pxab (\sev f)}_m\dt\notag\\
		=& \sum_{p'\leq p}\left(B(\grad_x^{p'}\grad_v^{q+2}f, \grad_x^{p+1}\grad_v^{q+1}f)+B(\grad_x^{p+1}\grad_v^{q+2}f, \grad_x^{p'}\grad_v^{q+1}f)\right)\dt\notag\\
		&+\sum_{p'\leq p}\left(B(\grad_x^{p'}\grad_v^{q+2}f,\grad_x^{p+1}\grad_v^qf ) + B(\grad_x^{p+1}\grad_v^{q+1}f, \grad_x^{p'}\grad_v^{q+1} f ) \right)\dt\notag\\
		&+ \sum_{p'\leq p}\sum_{p''\leq p+1}B(\grad_x^{p'}\grad_v^{q+2}f, \grad_x^{p''}\grad_v^{q+1}f) \dt. \label{cross:c}
\end{align}
As mentioned above, here $B$ denotes a bilinear form which is bounded on $L_m^2\times L_m^2$, the exact form of which is irrelevant.
It follows that 
\begin{align*}
b t^2 \cC_{c,\alpha,\beta,j} \lesssim (\varepsilon+t)\cd_\sigma + \mathcal{E}_\sigma. 
\end{align*}
As in \eqref{hypo:pab:est}, from similar calculations to those in the proof of \eqref{unif:bdd:it} in Lemma \ref{comp:it} for $p=2$, we have
\begin{align*}
\sum_k \brak{\pab((\sev)^2 f),\pab f}_m - \sum_k \norm{\pab(\sev f)}_{L_m^2}^2 & \lesssim \mathcal{E}_\sigma \\
at\sum_k \left\{\brak{\grad_v \pab((\sev)^2 f), \grad_v \pab f}_m - \sum_k \norm{\grad_v \pab(\sev f)}_{L_m^2}^2\right\}  & \lesssim \mathcal{E}_\sigma \\
ct^3 \sum_k\left\{ \brak{\grad_x \pab((\sev)^2 f), \grad_x \pab f}_m - \sum_k \norm{\grad_x \pab(\sev f)}_{L_m^2}^2 \right\}& \lesssim \mathcal{E}_\sigma. 
\end{align*}
This completes the necessary estimates on the It\^o correction terms. 

\subsection*{Final estimate:} 
For $t$ sufficiently small, combining the estimates on the linear terms of \eqref{hypo:pab:est} from the above arguments with \eqref{hypo:cross:t:final}, \eqref{hypo:cross:dfc:final}, \eqref{hypo:cross:n:final} and \eqref{hypo:n:final:1}-\eqref{hypo:n:final:3}, we ultimately obtain
\begin{align}
	\dee \mathcal{E}_\sigma \leq C\mathcal{E}_\sigma \dt - (2-C\varepsilon)\cd_\sigma\dt + \cM_\sigma + C\norm{f}_{H_m^\sigma}^p\dt,\label{hypo:pre:gronwall}
\end{align}
where $\cM_\sigma$ denotes all of the martingale terms:
\begin{align}
	\cM_{\sigma} =& -2\brak{\pab(\grad_v f\cdot\dee W_t),\pab f}_m\notag\\
			&-2at\sum_{j=1}^d\brak{\pvab(\grad_vf\cdot\dee W_t),\pvab f}_m\notag\\
			&-bt^2\sum_{j=1}^d\brak{\pvab(\grad_vf\cdot\dee W_t),\pxab f}_m \notag\\
	&- bt^2\sum_{j=1}^d\brak{\pvab f,\pxab(\grad_vf\cdot\dee W_t)}_m \notag\\
	&-2ct^3\sum_{j=1}^d\brak{\pxab(\grad_vf\cdot\dee W_t),\pxab f}_m.
\end{align}
Therefore, integrating in time and using the BDG inequality, we obtain 
\begin{align}
	\E\sup_{t'\leq \tau}\mathcal{E}_\sigma(t') + (2-C\varepsilon)\E\int_0^\tau\cd_\sigma(s)\ds \leq& \E\norm{f_0}_{H_m^\sigma}^2+C\E\int_0^\tau\mathcal{E}_\sigma(s)\ds + C\E\int_0^\tau\norm{f(s)}_{H_m^\sigma}^p\ds\notag\\
	&+C\E\left(\int_0^\tau(\mathcal{E}_\sigma(s)+\varepsilon\sqrt{\mathcal{E}_\sigma(s)\cd_\sigma(s)})^2\ds\right)^{\frac{1}{2}}\notag\\
	\leq&\E\norm{f_0}_{H_m^\sigma}^2+C\E\int_0^\tau\mathcal{E}_\sigma(s)\ds + C\E\int_0^\tau\norm{f(s)}_{H_m^\sigma}^p\ds\notag\\
	&+C\E\sup_{t'\leq\tau}\mathcal{E}^{\frac{1}{2}}(t')\left(\int_0^\tau(\mathcal{E}_\sigma(s)^{\frac{1}{2}}+\varepsilon\cd_\sigma(s)^{\frac{1}{2}})^2\ds\right)^{\frac{1}{2}}\notag\\
	\leq&\E\norm{f_0}_{H_m^\sigma}^2 + C\E\int_0^\tau\mathcal{E}_\sigma(s)\ds+C\E\int_0^\tau\norm{f(s)}_{H_m^\sigma}^p\ds\notag\\
	&+\frac{1}{2}\E\sup_{t'\leq\tau}\mathcal{E}_\sigma(t') + C\varepsilon^2\E\int_0^\tau\cd_\sigma(s)\ds.
\end{align}
Rearranging terms and  applying Gr\"onwall's inequality, we obtain 
\begin{equation}
	\E\sup_{t'\leq \tau}\mathcal{E}_\sigma(t') \lesssim 1.
\end{equation}
As is standard (and as in the proofs of the $p>2$ estimates \eqref{unif:bdd:it} and \eqref{unif:lim} in Section \ref{sec:iii}), a straightforward variation of the above argument extends to prove
\begin{equation}
	\E \left(\sup_{t'\leq \tau}\mathcal{E}_\sigma(t')\right)^p \lesssim 1, 
\end{equation}
completing the desired estimates. 
\end{proof} 
For the reader's convenience, we include the elementary proof that such constants $a,b,c$ as prescribed in \eqref{constants:hypo} do indeed exist. A more general situation is treated in \cite{villani2002review}*{Lemma A.16}.
\begin{lemma}\label{constants:hypo:proof}
	Let $\varepsilon>0$. Then there exist constants $a,b,c$ such that \eqref{constants:hypo} holds:
	\begin{equation*}
		\begin{cases}
			1 \geq \frac{a}{\varepsilon} \geq \frac{b}{\varepsilon^2} \geq \frac{c}{\varepsilon^3}\\
			a \leq \varepsilon \sqrt{1\cdot b}, \quad b \leq \varepsilon \sqrt{a\cdot c}
		\end{cases}
	\end{equation*}
 \end{lemma}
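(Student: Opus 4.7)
The plan is to choose $a, b, c$ as positive powers of $\varepsilon$, exploiting the fact that (consistent with the $\varepsilon \ll 1$ regime used in the main hypocoercivity argument) we may assume $\varepsilon \in (0, 1]$. Setting $a = \varepsilon^{p}$, $b = \varepsilon^{q}$, $c = \varepsilon^{r}$, every inequality in \eqref{constants:hypo} becomes a comparison of the form $\varepsilon^{N} \leq \varepsilon^{M}$, which under $\varepsilon \leq 1$ is equivalent to $N \geq M$. Thus the problem reduces to solving a small system of linear inequalities in the exponents $(p,q,r)$.

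Unpacking, the chain $1 \geq a/\varepsilon \geq b/\varepsilon^{2} \geq c/\varepsilon^{3}$ translates to
\begin{equation*}
p \geq 1, \qquad q \geq p+1, \qquad r \geq q+1,
\end{equation*}
while $a \leq \varepsilon\sqrt{b}$ and $b \leq \varepsilon\sqrt{ac}$ become
\begin{equation*}
2p \geq q + 2 \quad \text{and} \quad 2q \geq p + r + 2.
\end{equation*}
The triple $(p, q, r) = (5, 8, 9)$ satisfies all five constraints (with equality in the last three), so taking $a = \varepsilon^{5}$, $b = \varepsilon^{8}$, $c = \varepsilon^{9}$ works; the verification is a direct substitution.

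There is no substantive obstacle, since the content of the lemma is simply that the exponent system is feasible. For completeness, one can note that $(5,8,9)$ is essentially the minimal choice: combining $q \geq p+1$ with $q \leq 2p-2$ forces $p \geq 3$, combining $r \geq q+1$ with $r \leq 2q-p-2$ forces $q \geq p+3$, and together with $q \leq 2p-2$ this forces $p \geq 5$, after which $q=8$ and $r=9$ are determined. This explains both why such constants exist and why the relative sizes $a \gg b \gg c$ built into \eqref{constants:hypo} are consistent.
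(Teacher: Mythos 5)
Your proof is correct: the translation of \eqref{constants:hypo} into the exponent system $p\geq 1$, $q\geq p+1$, $r\geq q+1$, $2p\geq q+2$, $2q\geq p+r+2$ is accurate (given $\varepsilon\leq 1$), and $(p,q,r)=(5,8,9)$ does satisfy all five constraints, so $a=\varepsilon^5$, $b=\varepsilon^8$, $c=\varepsilon^9$ works; your minimality remark is also sound, though not needed. The paper's proof is the same power-law idea but with one structural difference: it sets $a=\vartheta$, $b=\vartheta^{m_2}$, $c=\vartheta^{m_3}$ for an \emph{auxiliary} small parameter $\vartheta$, with $1<m_2<2$ and $m_2<m_3<2m_2-1$, and then takes $\vartheta$ sufficiently small depending on $\varepsilon$; since every required inequality reduces to $\vartheta^{\text{positive power}}\leq\varepsilon$, this yields the lemma for every $\varepsilon>0$ with no size restriction, at the cost of not giving explicit constants in terms of $\varepsilon$. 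Your version instead uses $\varepsilon$ itself as the base, which buys completely explicit constants but formally only covers $\varepsilon\in(0,1]$, whereas the lemma as stated allows any $\varepsilon>0$; this is a cosmetic rather than substantive restriction (for $\varepsilon\geq 1$ one may take $a=b=c=1$, and in the hypocoercivity argument only $\varepsilon\ll 1$ is ever used), but if you want your proof to match the statement verbatim you should add that one-line remark.
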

\begin{proof}
	Let $\vartheta >0$, and pick $m_1,m_2,m_3>0$ such that
	\begin{equation*}
		m_1 = 1, \quad m_2 \in (1,2) ,\quad m_3 \in (m_2, 2m_2-1)
	\end{equation*}
and set:
\begin{equation*}
	a = \vartheta^{m_1}=\vartheta,\quad b = \vartheta^{m_2},\quad c= \vartheta^{m_3}.
\end{equation*}
The proof of the lemma is concluded by picking $\vartheta $ sufficiently small.
\end{proof}

\bibliographystyle{siam}
\bibliography{Vlasov}

@article{navarro2016structure,
  title={Structure of plasma heating in gyrokinetic {A}lfv{\'e}nic turbulence},
  author={Navarro, Alejandro Ba{\~n}{\'o}n and Teaca, Bogdan and Told, Daniel and Groselj, Daniel and Crandall, Paul and Jenko, Frank},
  journal={Physical review letters},
  volume={117},
  number={24},
  pages={245101},
  year={2016},
  publisher={APS}
}

@article{told2015multiscale,
  title={Multiscale nature of the dissipation range in gyrokinetic simulations of {A}lfv{\'e}nic turbulence},
  author={Told, Daniel and Jenko, Frank and TenBarge, Jason M and Howes, Gregory G and Hammett, Gregory W},
  journal={Physical review letters},
  volume={115},
  number={2},
  pages={025003},
  year={2015},
  publisher={APS}
}

@article{tenbarge2014oscillating,
  title={An oscillating {L}angevin antenna for driving plasma turbulence simulations},
  author={TenBarge, JM and Howes, Gregory G and Dorland, William and Hammett, Gregory W},
  journal={Computer Physics Communications},
  volume={185},
  number={2},
  pages={578--589},
  year={2014},
  publisher={Elsevier}
}

@book{tao2006nonlinear,
  title={Nonlinear dispersive equations: local and global analysis},
  author={Tao, Terence},
  number={106},
  year={2006},
  publisher={American Mathematical Soc.}
}

@article{hasselmann1968scattering,
  title={Scattering of charged particles by random electromagnetic fields},
  author={Hasselmann, Klaus and Wibberenz, G},
  journal={Zeitschrift f{\"u}r Geophysik},
  volume={34},
  pages={353--388},
  year={1968}
}

@article{hall1967diffusion,
  title={Diffusion, scattering, and acceleration of particles by stochastic electromagnetic fields},
  author={Hall, DE and Sturrock, PA},
  journal={The Physics of Fluids},
  volume={10},
  number={12},
  pages={2620--2628},
  year={1967},
  publisher={American Institute of Physics}
}

@article{jaekel1992fokker,
  title={The {F}okker-{P}lanck coefficients of cosmic ray transport in random electromagnetic fields},
  author={Jaekel, U and Schlickeiser, R},
  journal={Journal of Physics G: Nuclear and Particle Physics},
  volume={18},
  number={6},
  pages={1089},
  year={1992},
  publisher={IOP Publishing}
}

@article{hall1969particle,
  title={Particle dynamics in stochastic electromagnetic fields},
  author={Hall, Donald E},
  journal={Plasma Instabilities in Astrophysics},
  pages={253},
  year={1969},
  publisher={Gordon and Breach}
}

@article{krommes1983plasma,
  title={Plasma transport in stochastic magnetic fields. {P}art 3. {K}inetics of test particle diffusion},
  author={Krommes, John A and Oberman, Carl and Kleva, Robert G},
  journal={Journal of plasma physics},
  volume={30},
  number={1},
  pages={11--56},
  year={1983},
  publisher={Cambridge University Press}
}

@article{wingen2006influence,
  title={Influence of stochastic magnetic fields on relativistic electrons},
  author={Wingen, A and Abdullaev, SS and Finken, KH and Jakubowski, M and Spatschek, KH},
  journal={Nuclear fusion},
  volume={46},
  number={11},
  pages={941},
  year={2006},
  publisher={IOP Publishing}
}

@article{balescu1994langevin,
  title={Langevin equation versus kinetic equation: {S}ubdiffusive behavior of charged particles in a stochastic magnetic field},
  author={Balescu, Radu and Wang, Hai-Da and Misguich, JH},
  journal={Physics of Plasmas},
  volume={1},
  number={12},
  pages={3826--3842},
  year={1994},
  publisher={American Institute of Physics}
}

@article{wang1995diffusive,
  title={Diffusive processes in a stochastic magnetic field},
  author={Wang, Hai-Da and Vlad, Mǎdǎlina and Eijnden, E Vanden and Spineanu, F and Misguich, JH and Balescu, Radu},
  journal={Physical Review E},
  volume={51},
  number={5},
  pages={4844},
  year={1995},
  publisher={APS}
}

@article{vanden1996statistical,
  title={Statistical description and transport in stochastic magnetic fields},
  author={Vanden Eijnden, Eric and Balescu, Radu},
  journal={Physics of plasmas},
  volume={3},
  number={3},
  pages={874--888},
  year={1996},
  publisher={American Institute of Physics}
}

@article{de2018invariant,
  title={Invariant measures for a stochastic {F}okker-{P}lanck equation},
  author={de Moor, Sylvain and Rodrigues, Luis Miguel Miguel and Vovelle, Julien},
  journal={Kinetic and Related Models},
  volume={11},
  number={2},
  pages={357--395},
  year={2018}
}

@book{boyd2003physics,
  title={The physics of plasmas},
  author={Boyd, TJ M and Boyd, TJM and Sanderson, JJ},
  year={2003},
  publisher={Cambridge university press}
}

@book{ikeda2014stochastic,
  title={Stochastic differential equations and diffusion processes},
  author={Ikeda, Nobuyuki and Watanabe, Shinzo},
  year={2014},
  publisher={Elsevier}
}

@article{pfaffelmoser1992global,
  title={Global classical solutions of the {V}lasov-{P}oisson system in three dimensions for general initial data},
  author={Pfaffelmoser, Klaus},
  journal={Journal of Differential Equations},
  volume={95},
  number={2},
  pages={281--303},
  year={1992},
  publisher={Academic Press}
}

@article{schaeffer1991global,
  title={Global existence of smooth solutions to the {V}lasov {P}oisson system in three dimensions},
  author={Schaeffer, Jack},
  journal={Communications in partial differential equations},
  volume={16},
  number={8-9},
  pages={1313--1335},
  year={1991},
  publisher={Taylor \& Francis}
}

@article{batt1991global,
  title={Global classical solutions of the periodic {V}lasov-{P}oisson system in three dimensions},
  author={Batt, J{\"u}rgen and Rein, Gerhard},
  journal={Comptes rendus de l'Acad{\'e}mie des sciences. S{\'e}rie 1, Math{\'e}matique},
  volume={313},
  number={6},
  pages={411--416},
  year={1991}
}

@article{bouchut1993existence,
  title={Existence and uniqueness of a global smooth solution for the {V}lasov-{P}oisson-{F}okker-{P}lanck system in three dimensions},
  author={Bouchut, Fran{\c{c}}ois},
  journal={Journal of functional analysis},
  volume={111},
  number={1},
  pages={239--258},
  year={1993},
  publisher={Elsevier}
}

@article{victory1990classical,
  title={On classical solutions of {V}lasov-{P}oisson {F}okker-{P}lanck systems},
  author={Victory Jr, Harold Dean and O'Dwyer, Brian P},
  journal={Indiana University mathematics journal},
  pages={105--156},
  year={1990},
  publisher={JSTOR}
}

@article{horst1987global,
  title={Global strong solutions of {V}lasov's equation--necessary and sufficient conditions for their existence},
  author={Horst, E},
  journal={Banach Center Publications},
  volume={1},
  number={19},
  pages={143--153},
  year={1987}
}

@article{horst1981classical,
  title={On the classical solutions of the initial value problem for the unmodified non-linear {V}lasov equation {I} general theory},
  author={Horst, ERNST and Neunzert, H},
  journal={Mathematical Methods in the Applied Sciences},
  volume={3},
  number={1},
  pages={229--248},
  year={1981},
  publisher={Wiley Online Library}
}

@article{neunzert1984vlasov,
  title={On the {V}lasov-{F}okker-{P}lanck equation},
  author={Neunzert, Helmut and Pulvirenti, Mario and Triolo, Livio},
  journal={Mathematical methods in the applied sciences},
  volume={6},
  number={1},
  pages={527--538},
  year={1984},
  publisher={Wiley Online Library}
}

@article{GV14,
  title={Local and global existence of smooth solutions for the stochastic {E}uler equations with multiplicative noise},
  author={Glatt-Holtz, Nathan E and Vicol, Vlad C},
  journal={The Annals of Probability},
  volume={42},
  number={1},
  pages={80--145},
  year={2014},
  publisher={Institute of Mathematical Statistics}
}

@article{brzezniak2016existence,
  title={Existence and uniqueness for stochastic 2{D} {E}uler flows with bounded vorticity},
  author={Brze{\'z}niak, Zdzis{\l}aw and Flandoli, Franco and Maurelli, Mario},
  journal={Archive for Rational Mechanics and Analysis},
  volume={221},
  number={1},
  pages={107--142},
  year={2016},
  publisher={Springer}
}

@article{punshon2018boltzmann,
  title={On the {B}oltzmann equation with stochastic kinetic transport: global existence of renormalized martingale solutions},
  author={Punshon-Smith, Samuel and Smith, Scott},
  journal={Archive for Rational Mechanics and Analysis},
  volume={229},
  number={2},
  pages={627--708},
  year={2018},
  publisher={Springer}
}

@article{delarue2014noise,
  title={Noise Prevents Collapse of {V}lasov-{P}oisson Point Charges},
  author={Delarue, Francois and Flandoli, Franco and Vincenzi, Dario},
  journal={Communications on Pure and Applied Mathematics},
  volume={67},
  number={10},
  pages={1700--1736},
  year={2014},
  publisher={Wiley Online Library}
}

@article{champagnat2018strong,
  title={Strong solutions to stochastic differential equations with rough coefficients},
  author={Champagnat, Nicolas and Jabin, Pierre-Emmanuel},
  journal={The Annals of Probability},
  volume={46},
  number={3},
  pages={1498--1541},
  year={2018},
  publisher={Institute of Mathematical Statistics}
}

@article{mohammed2015sobolev,
  title={Sobolev differentiable stochastic flows for SDEs with singular coefficients: {A}pplications to the transport equation},
  author={Mohammed, Salah-Eldin A and Nilssen, Torstein K and Proske, Frank N},
  journal={The Annals of Probability},
  volume={43},
  number={3},
  pages={1535--1576},
  year={2015},
  publisher={Institute of Mathematical Statistics}
}

@article{beck2019stochastic,
  title={Stochastic {ODE}s and stochastic linear {PDE}s with critical drift: regularity, duality and uniqueness},
  author={Beck, Lisa and Flandoli, Franco and Gubinelli, Massimiliano and Maurelli, Mario},
  journal={Electronic Journal of Probability},
  volume={24},
  pages={1--72},
  year={2019},
  publisher={Institute of Mathematical Statistics and Bernoulli Society}
}

@article{fedrizzi2011pathwise,
  title={Pathwise uniqueness and continuous dependence for {SDE}s with non-regular drift},
  author={Fedrizzi, Ennio and Flandoli, Franco},
  journal={Stochastics: An International Journal of Probability and Stochastic Processes},
  volume={83},
  number={03},
  pages={241--257},
  year={2011},
  publisher={Taylor \& Francis}
}

@article{fedrizzi2013noise,
  title={Noise prevents singularities in linear transport equations},
  author={Fedrizzi, Ennio and Flandoli, Franco},
  journal={Journal of Functional Analysis},
  volume={264},
  number={6},
  pages={1329--1354},
  year={2013},
  publisher={Elsevier}
}

@article{fedrizzi2017regularity,
  title={Regularity of stochastic kinetic equations},
  author={Fedrizzi, Ennio and Flandoli, Franco and Priola, Enrico and Vovelle, Julien},
  journal={Electronic Journal of Probability},
  volume={22},
  pages={1--42},
  year={2017},
  publisher={Institute of Mathematical Statistics and Bernoulli Society}
}

@article{coghi2016propagation,
  title={Propagation of chaos for interacting particles subject to environmental noise},
  author={Coghi, Michele and Flandoli, Franco},
  journal={The Annals of Applied Probability},
  volume={26},
  number={3},
  pages={1407--1442},
  year={2016},
  publisher={Institute of Mathematical Statistics}
}

@article{flandoli2010well,
  title={Well-posedness of the transport equation by stochastic perturbation},
  author={Flandoli, Franco and Gubinelli, Massimiliano and Priola, Enrico},
  journal={Inventiones mathematicae},
  volume={180},
  number={1},
  pages={1--53},
  year={2010},
  publisher={Springer}
}

@article{crisan2019solution,
  title={Solution properties of a 3{D} stochastic {E}uler fluid equation},
  author={Crisan, Dan and Flandoli, Franco and Holm, Darryl D},
  journal={Journal of Nonlinear Science},
  volume={29},
  number={3},
  pages={813--870},
  year={2019},
  publisher={Springer}
}

@article{crisan2019well,
  title={Well-posedness for a stochastic 2{D} {E}uler equation with transport noise},
  author={Crisan, Dan and Lang, Oana},
  journal={arXiv preprint arXiv:1907.00451},
  year={2019}
}

@article{debussche2011local,
  title={Local martingale and pathwise solutions for an abstract fluids model},
  author={Debussche, Arnaud and Glatt-Holtz, Nathan and Temam, Roger},
  journal={Physica D: Nonlinear Phenomena},
  volume={240},
  number={14-15},
  pages={1123--1144},
  year={2011},
  publisher={Elsevier}
}

@article{debussche2012global,
  title={Global existence and regularity for the 3{D} stochastic primitive equations of the ocean and atmosphere with multiplicative white noise},
  author={Debussche, Arnaud and Glatt-Holtz, Nathan and Temam, Roger and Ziane, Mohammed},
  journal={Nonlinearity},
  volume={25},
  number={7},
  pages={2093},
  year={2012},
  publisher={IOP Publishing}
}

@article{brzezniak2020well,
  title={Well-posedness of the 3{D} stochastic primitive equations with transport noise},
  author={Brze{\'z}niak, Zdzis{\l}aw and Slav{\'\i}k, Jakub},
  journal={arXiv preprint arXiv:2008.00274},
  year={2020}
}

@article{brzezniak1992stochastic,
  title={Stochastic {N}avier-{S}tokes equations with multiplicative noise},
  author={Brze{\'z}niak, Z and Capi{\'n}ski, M and Flandoli, Franco},
  journal={Stochastic Analysis and Applications},
  volume={10},
  number={5},
  pages={523--532},
  year={1992},
  publisher={Taylor \& Francis}
}

@article{mikulevicius2004stochastic,
  title={Stochastic {N}avier--{S}tokes equations for turbulent flows},
  author={Mikulevicius, Remigijus and Rozovskii, Boris L},
  journal={SIAM Journal on Mathematical Analysis},
  volume={35},
  number={5},
  pages={1250--1310},
  year={2004},
  publisher={SIAM}
}

@article{luk2016strichartz,
	title={Strichartz estimates and moment bounds for the relativistic {V}lasov--{M}axwell system},
	author={Luk, Jonathan and Strain, Robert M},
	journal={Archive for Rational Mechanics and Analysis},
	volume={219},
	number={1},
	pages={445--552},
	year={2016},
	publisher={Springer}
}

@book{kunita1997stochastic,
	title={Stochastic flows and stochastic differential equations},
	author={Kunita, Hiroshi},
	volume={24},
	year={1997},
	publisher={Cambridge university press}
}

@article{lions1991propagation,
  title={Propagation of moments and regularity for the 3-dimensional {V}lasov-{P}oisson system},
  author={Lions, Pierre-Louis and Perthame, Beno{\^\i}t},
  journal={Inventiones mathematicae},
  volume={105},
  number={1},
  pages={415--430},
  year={1991},
  publisher={Springer}
}

@article{loeper2006uniqueness,
  title={Uniqueness of the solution to the {V}lasov--{P}oisson system with bounded density},
  author={Loeper, Gr{\'e}goire},
  journal={Journal de math{\'e}matiques pures et appliqu{\'e}es},
  volume={86},
  number={1},
  pages={68--79},
  year={2006},
  publisher={Elsevier}
}

@article{villani2002review,
  title={A review of mathematical topics in collisional kinetic theory},
  author={Villani, C{\'e}dric},
  journal={Handbook of mathematical fluid dynamics},
  volume={1},
  number={71-305},
  pages={3--8},
  year={2002}
}

@book{dric2009hypocoercivity,
  title={Hypocoercivity},
  author={Villani, C\'edric},
  number={949-951},
  year={2009},
  publisher={American Mathematical Soc.}
}

@book{da1996ergodicity,
  title={Ergodicity for infinite dimensional systems},
  author={Da Prato, Giuseppe and Zabczyk, Jerzy},
  volume={229},
  year={1996},
  publisher={Cambridge university press}
}

@book{da2014stochastic,
	title={Stochastic equations in infinite dimensions},
	author={Da Prato, Giuseppe and Zabczyk, Jerzy},
	year={2014},
	publisher={Cambridge university press}
}

@article{flandoli1995martingale,
	title={Martingale and stationary solutions for stochastic {N}avier-{S}tokes equations},
	author={Flandoli, Franco and Gatarek, Dariusz},
	journal={Probability Theory and Related Fields},
	volume={102},
	number={3},
	pages={367--391},
	year={1995},
	publisher={Springer}
}

@article{gyongy1996existence,
	title={Existence of strong solutions for {I}t{\^o}'s stochastic equations via approximations},
	author={Gy{\"o}ngy, Istv{\'a}n and Krylov, Nicolai},
	journal={Probability theory and related fields},
	volume={105},
	number={2},
	pages={143--158},
	year={1996},
	publisher={Springer}
}

@article{bensoussan1995stochastic,
	title={Stochastic {N}avier--{S}tokes equations},
	author={Bensoussan, Alain},
	journal={Acta Applicandae Mathematica},
	volume={38},
	number={3},
	pages={267--304},
	year={1995},
	publisher={Springer}
}

@article{brzezniak2013existence,
	title={Existence of a martingale solution of the stochastic {N}avier--{S}tokes equations in unbounded 2{D} and 3{D} domains},
	author={Brze{\'z}niak, Zdzis{\l}aw and Motyl, El{\.z}bieta},
	journal={Journal of Differential Equations},
	volume={254},
	number={4},
	pages={1627--1685},
	year={2013},
	publisher={Elsevier}
}

@article{capinski1993navier,
	title={{N}avier--{S}tokes equations with multiplicative noise},
	author={Capinski, M and Cutland, NJ},
	journal={Nonlinearity},
	volume={6},
	number={1},
	pages={71},
	year={1993},
	publisher={IOP Publishing}
}

@article{mikulevicius2005global,
	title={Global {L}2-solutions of stochastic {N}avier--{S}tokes equations},
	author={Mikulevicius, Remigijus and Rozovskii, Boris L},
	journal={The Annals of Probability},
	volume={33},
	number={1},
	pages={137--176},
	year={2005},
	publisher={Institute of Mathematical Statistics}
}

@article{crisan2020local,
	title={Local well-posedness for the great lake equation with transport noise},
	author={Crisan, Dan and Lang, Oana},
	journal={arXiv preprint arXiv:2003.03357},
	year={2020}
}

@article{holm2015variational,
	title={Variational principles for stochastic fluid dynamics},
	author={Holm, Darryl D},
	journal={Proceedings of the Royal Society A: Mathematical, Physical and Engineering Sciences},
	volume={471},
	number={2176},
	pages={20140963},
	year={2015},
	publisher={The Royal Society Publishing}
}

@article{alonso2020well,
	title={On the well-posedness of stochastic {B}oussinesq equations with transport noise},
	author={Alonso-Or{\'a}n, Diego and Bethencourt de Le{\'o}n, Aythami},
	journal={Journal of Nonlinear Science},
	volume={30},
	number={1},
	pages={175--224},
	year={2020},
	publisher={Springer}
}

\end{document}